\numberwithin{equation}{section}
\newtheorem{theorem}[equation]{\sc Theorem}
\newtheorem{lemma}[equation]{\sc Lemma}
\newtheorem{corollary}[equation]{\sc Corollary}
\newtheorem{proposition}[equation]{\sc Proposition}
\newtheorem{remark}[equation]{\sc Remark }
\newtheoremstyle{expl}{3pt}{3pt}{}{}{\itshape}{.}{.5em}{%
  {\sc\thmname{#1} \theequation}}
\theoremstyle{expl}
\newtheorem{definition}[equation]{Definition}
\newtheorem{example}[equation]{Example}
\def\mput #1 #2/{\put{$\bullet_{#1}$} [tl] <-1mm,1mm> at #2}
\def\arr#1#2{\arrow <2mm> [0.25,0.75] from #1 to #2}
\def\ssize{\scriptstyle}
\def\sssize{\scriptscriptstyle}
\def\len{\operatorname{\ell}}
\def\init{\operatorname{init}}
\def\per{\operatorname{per}}
\def\mult{\operatorname{mult}}
\def\wf{\operatorname{wf}}
\def\fin{\operatorname{fin}}
\def\ind{\operatorname{ind}}
\def\stairgeq{\geq_{\rm stair}}
\def\stairleq{\leq_{\rm stair}}
\def\wfleq{\leq_{\rm wf}^*}
\def\flt{<_{\mathcal F}}
\def\fleq{\leq_{\mathcal F}}
\def\fgt{>_{\mathcal F}}
\def\fgeq{\geq_{\mathcal F}}
\def\op{{\rm op}}
\def\iff{\Longleftrightarrow}
\definecolor{darkgreen}{rgb}{0,0.5,0}
\definecolor{brown}{rgb}{.5,.3,.3}
\newenvironment{blue}{\color{blue}}{}
\newenvironment{magenta}{\color{magenta}}{}
\renewcommand{\color}[1]{}
\begin{document}

\bigskip\bigskip
\begin{center}{\large\bf The Auslander-Reiten Components\\[1ex] in the Rhombic Picture}
\end{center}

\bigskip
\centerline{Markus Schmidmeier and Helene R.\ Tyler}

\begin{center}
\parbox[t]{5.5cm}{\tiny\begin{center}
              Department of Mathematical Sciences\\ 
              Florida Atlantic University\\
              777 Glades Road\\
              Boca Raton, Florida 33431\end{center}}
\parbox[t]{5.5cm}{\tiny\begin{center}
              Department of Mathematics\\ 
              Manhattan College\\
              4513 Manhattan College Parkway\\
              Riverdale, New York 10471\end{center}}

\medskip \parbox[t]{5.5cm}{\centerline{\tiny\tt markus@math.fau.edu}}
         \parbox[t]{5.5cm}{\centerline{\tiny\tt helene.tyler@manhattan.edu}}

\medskip

\begin{center}\tiny Dedicated to Mark Kleiner on the occasion of his 65th birthday\end{center}

\bigskip \parbox{10cm}{\tiny{\bf Abstract.} For an indecomposable module  $M$  
over a path algebra of a quiver of type  $\widetilde{\mathbb A}_n$, 
the Gabriel-Roiter measure gives rise to four new numerical invariants; 
we call them the multiplicity, and the initial, periodic and final parts.  
We describe how these invariants for  $M$ and for its dual specify the position of 
$M$ in the Auslander-Reiten quiver of the algebra.}

\bigskip \parbox{10cm}{\tiny{\it MSC 2010: 16G70 (primary), 
    05E10, 16D70, 16G20}}

\medskip \parbox{10cm}{\tiny{\it Keywords: Auslander-Reiten quiver, Auslander-Reiten sequence, Gabriel-Roiter measure, rhombic picture, string algebra}}

\end{center}

\bigskip

\section{Introduction}

\bigskip

\subsection{The Gabriel-Roiter Measure}
We revisit the traditional partition of the module category 
of a tame hereditary algebra into the preprojective, regular, and preinjective components 
\cite{ARS}, motivated by recent developments in the Gabriel-Roiter theory. 
In particular, the Gabriel-Roiter measure of a module, $\mu(M)$, 
and the dually defined comeasure, $\mu^*(M)$, 
specify the coordinates of the module in the rhombic picture \cite{Ringel05}. 
We focus primarily on tame hereditary $k$-algebras of type ${\widetilde{\mathbb A}_n}$,
where $k$ is an algebraically closed field,
since their modules and homomorphisms are controlled 
by the combinatorics of strings and bands \cite{ButlerRingel87,Krause91}. 
This allows us to build a $\lq\lq$Greedy Algorithm" for computing the Gabriel-Roiter 
measure and comeasure for a $k{\widetilde{\mathbb A}_n}$-module. 
We show that the Gabriel-Roiter measure of a $k{\widetilde{\mathbb A}_n}$-module $M$ 
decomposes into three distinct parts: 

\[\mu(M)=\init(M)\;\cdot\; \per(M)^{\mult(M)}\;\cdot\; \fin(M),\] 

and we extract a great deal of information from this decomposition. 
To begin, we find that the decomposition identifies 
the type of the
Auslander-Reiten component in which the module resides.

\begin{proposition}\label{proposition-type-AR-component} 
Let $M$ be an indecomposable $k{\widetilde{\mathbb A}_n}$-module. 
	\begin{enumerate}
	\item $M$ is preprojective if and only if $\fin(M)< \per(M)$ and $\fin(DM)> \per(DM)$ 
	\item  $M$ is preinjective if and only if $\fin(M)> \per(M)$ and $\fin(DM)< \per(DM)$
	\item  $M$ is regular if and only if $\fin(M)< \per(M)$ and $\fin(DM)< \per(DM)$
	\end{enumerate}

\end{proposition}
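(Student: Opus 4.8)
The plan is to deduce all three equivalences from a single comparison between $\fin$ and $\per$, and to settle that comparison through the string combinatorics behind the greedy algorithm.

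\emph{Reduction via duality.} The $k$-duality $D=\operatorname{Hom}_k(-,k)$ carries the finite dimensional modules over the path algebra $kQ$ of our quiver $Q$ of type $\widetilde{\mathbb A}_n$ to modules over $kQ^{\op}$, which is again of type $\widetilde{\mathbb A}_n$; under $D$ the preprojective and preinjective components are interchanged while the class of regular modules is preserved. Hence it suffices to establish the single claim
\[ \fin(M) > \per(M) \iff M \text{ is preinjective}, \]
together with the fact that the equality $\fin(M)=\per(M)$ never occurs for indecomposable $M$. Indeed, applying this claim to $DM$ and invoking the duality rules gives $\fin(DM) > \per(DM) \iff M \text{ is preprojective}$. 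Since every indecomposable is exactly one of preprojective, regular, or preinjective, the claim forces $\fin(M) < \per(M)$ precisely when $M$ is preprojective or regular, and $\fin(DM) < \per(DM)$ precisely when $M$ is preinjective or regular. Intersecting these two dichotomies yields (1), (2), and (3): for instance $M$ is regular if and only if it is neither preinjective nor dual to a preinjective, that is, if and only if $\fin(M) < \per(M)$ and $\fin(DM) < \per(DM)$.

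\emph{The periodic-versus-final comparison.} It remains to prove the displayed claim, which is the heart of the matter. Here I would use the description of an indecomposable $k\widetilde{\mathbb A}_n$-module as a string module together with the output of the greedy algorithm, which exhibits $\mu(M)$ in the form $\init(M)\cdot\per(M)^{\mult(M)}\cdot\fin(M)$. In this decomposition the factor $\per(M)$ records the length pattern contributed by one full turn of the string around the cycle, while $\fin(M)$ records the pattern of the tail remaining after the last complete turn. The task is then to show that this terminal segment is lexicographically heavier than a single period exactly for the strings whose modules lie in the preinjective component, and lighter for the preprojective and the regular (in particular periodic or winding) strings.

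\emph{Main obstacle.} The decisive step is this last equivalence, translating the homological condition that $M$ be preinjective into the combinatorial inequality $\fin(M)>\per(M)$. This requires a careful reading, through the greedy algorithm, of how the string of a preinjective module differs from that of a preprojective or a regular one, and in particular of where the greedy chain of indecomposable submodules situates the final part relative to the periodic block. I expect this bookkeeping---together with ruling out the equality $\fin(M)=\per(M)$---to be the main technical burden; once it is in place, the duality reduction above delivers all three parts of the proposition.
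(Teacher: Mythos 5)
Your formal skeleton is sound and in fact mirrors the paper's own logic: the paper likewise proves one implication for each of the three component types and then uses the trichotomy preprojective/regular/preinjective together with the mutual exclusivity of the three sign patterns to obtain all the converses. Your reduction via duality to the single equivalence ``$\fin(M)\fgt\per(M)$ iff $M$ is preinjective'' (plus non-equality) is a legitimate repackaging of that argument. The problem is that everything after the reduction is announcement rather than proof: the displayed claim is precisely the mathematical content of the proposition, and you defer it entirely to unspecified ``bookkeeping.'' As written, the proposal proves nothing beyond the easy Boolean manipulation.

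What is actually needed, and what the paper supplies, is the following. First, a combinatorial lemma (Lemma~\ref{lemma-tails-have-high-measure}) stating that if a finite sequence $\mathcal K$ is minimal in $\mathcal F$ among its rotations and is not a proper power, then every proper tail of $\mathcal K$ is strictly larger than $\mathcal K$ in $\mathcal F$; this is the engine that forces the Greedy Algorithm, once it starts a period, to consume all full iterations of it, and it yields the IPF decomposition with $\per(M)\in\{\mathcal L,\mathcal R,(h)\}$. Second, for preinjective $M$ the paper argues (Lemma~\ref{lemma-preinj}) that since $M$ is the target of an irreducible epimorphism obtained by deleting a cohook, the final part begins with a hook sequence obtained from $\mathcal L$ or $\mathcal R$ by reducing one entry and truncating -- which, in the order on $\mathcal F$, makes it strictly \emph{larger} than the full period (and the band-submodule case $\per(M)=(h)$, $\fin(M)=(r)$ with $r<h$ gives $(r)\fgt(h)$ directly). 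Third, for preprojective and regular modules one must show the final part is strictly smaller than the period, which the paper extracts from the case analysis of the Greedy Algorithm and the discussion of the tubes in Section~\ref{section-tubes}. None of these steps is routine, and your description of $\per(M)$ as ``one full turn of the string around the cycle'' already misstates the setup ($\per(M)$ is the minimally rotated left or right hook sequence, not a full fundamental domain, and a full turn generally mixes left and right hooks that the algorithm interleaves by $\mathcal F$-comparison). Until the three bullet points above are carried out, the proposal has a genuine gap exactly where you locate ``the main technical burden.''
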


Throughout the paper we freely use the Gabriel-Roiter theory terminology introduced by Ringel in \cite{Ringel05}, and the usual representation theory terminology introduced in \cite{Ringel84} (see also the books \cite{ARS} and \cite{ASS}). In particular, we recall from \cite{Ringel84} that the regular component $Reg(A)$ of the Auslander-Reiten quiver $\Gamma(A)$ of a tame hereditary $k$-algebra $A$ of Euclidean type is a disjoint union of a ${\mathbb P}_1(k)$-family of stable tubes.

\subsection{Module Families in the Rhombic Picture}

Assume that $k$ is an algebraically closed field and $A$ is a tame hereditary algebra of Euclidean type. We recall from \cite{Ringel84} that a regular $A$-module $M$ defines a ray and a coray in its tube. We order the modules on the intersection of the ray and the coray by size, obtaining the family $(M_i)_{i\in{\mathbb N}}$ of $M$. For preprojective and preinjective modules we use the cyclic structure of ${\widetilde{\mathbb A}_n}$ to define corresponding families. We define the rhombic limit of $M$ as 

\[{\displaystyle\overrightarrow{\rho}(M)=\left(\lim_{i\to\infty} \mu(M_i),\lim_{i\to\infty} \mu(DM_i)\right)}\]

We will see in Corollary~\ref{corollary-approach-GR-limit} and Proposition~\ref{rhombic-tubes}
that preprojective and regular families approach the GR-limit from below,
while preinjective families approach their GR-limit from above.
This can be expressed in terms of chain conditions on totally ordered sets of
GR-measures.

\begin{definition}
We call a family $\mathcal G$ of GR-measures {\it noetherian} ({\it artinian})
if $(\mathcal G,<)$ satisfies the ascending (decending) chain condition. 
For $\mathcal M$ a class of modules, we write
$$\mathcal G(\mathcal M) \;=\; \big\{ \mu(M) \big| M\in \mathcal M \big\},
\quad  \mathcal G^*(\mathcal M) \;=\;\big\{\mu^*(M)\big| M\in \mathcal M\big\}$$
for the sets of GR-measures and co-measures of modules in $\mathcal M$.
\end{definition}

A recent result by Dung and Simson \cite[Theorem~3.2]{DS} states that a right artinian ring $R$
is right pure semisimple if and only if the set 
$\mathcal G(\ind R)$ is noetherian.  We obtain in our situation:

\begin{corollary}
Let $\mathcal P$, $\mathcal R$, $\mathcal Q$ be full sets of representatives 
of indecomposable preprojective, regular and preinjective 
modules over a path algebra of type $\widetilde{\mathbb A}_n$.
Then the sets $\mathcal G(\mathcal P)$, $\mathcal G(\mathcal R)$ 
and $\mathcal G^*(\mathcal P)$
are artinian but not noetherian, and the sets $\mathcal G(\mathcal Q)$,
$\mathcal G^*(\mathcal R)$, $\mathcal G^*(\mathcal Q)$ are noetherian but not artinian.  
\qed
\end{corollary}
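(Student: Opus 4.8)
The plan is to derive this corollary as a direct consequence of the behavior of the three module families established earlier in the paper. The key observation is that the corollary makes six separate claims about chain conditions, and each can be reduced to the approach behavior of families toward their rhombic limits, as recorded in Corollary~\ref{corollary-approach-GR-limit} and Proposition~\ref{rhombic-tubes}. Since we are told that preprojective and regular families approach the GR-limit from below (i.e.\ their measures increase toward a limit) while preinjective families approach from above, the natural strategy is to show that ``approaching from below'' forces the artinian (descending chain) condition while failing the noetherian (ascending chain) condition, and dually for ``approaching from above.''

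First I would set up the reduction precisely. The set $\mathcal G(\mathcal P)$ is the set of all GR-measures of indecomposable preprojectives, and analogously for the others. I would partition each such set according to the families $(M_i)_{i\in\mathbb N}$ indexed by the (finitely many, up to the relevant symmetry) starting modules, so that each set is a union of countably many strictly monotone sequences together with their accumulation behavior. The crucial input is that within the totally ordered set of \emph{all} GR-measures, a sequence that strictly increases to a limit value which is itself the measure of no module in the class produces an infinite ascending chain (violating noetherian) but, because GR-measures are built from an order on finite strings satisfying a descending chain condition, cannot produce an infinite descending chain (confirming artinian). For the comeasure sets $\mathcal G^*(\mathcal M)$ I would apply the same argument after dualizing, using that $\mu^*(M)=\mu(DM)$ and that $D$ interchanges preprojective and preinjective modules while fixing the regular class.

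Carrying this out, I would treat the three artinian-not-noetherian cases first: $\mathcal G(\mathcal P)$ and $\mathcal G(\mathcal R)$ follow because preprojective and regular families approach their limits from below, so within each family the measures ascend without a maximum (no top element, hence not noetherian) while admitting no infinite strict descent (artinian); $\mathcal G^*(\mathcal P)=\mathcal G(\mathcal Q)$ under the duality, and since preinjective families approach from above, the dual statement gives the descending-without-bottom behavior that reads as artinian-not-noetherian for the comeasures of preprojectives. The three noetherian-not-artinian cases $\mathcal G(\mathcal Q)$, $\mathcal G^*(\mathcal R)$, $\mathcal G^*(\mathcal Q)$ follow by the mirror-image arguments: preinjective families approach from above (noetherian, not artinian), and dualizing the regular and preinjective cases flips the approach direction appropriately.

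The main obstacle I anticipate is \emph{not} the chain condition within a single family---that is immediate from the monotone approach---but rather controlling the \emph{union} over all countably many families simultaneously. A union of sets each satisfying a one-sided chain condition need not inherit that condition unless the families are suitably compatible under the ambient total order on GR-measures. The real work is therefore to verify that the limits of distinct families are themselves comparable and suitably separated, so that an infinite descending (respectively ascending) chain cannot be assembled by hopping between infinitely many families. I expect this to rest on the structure of the rhombic picture, where the rhombic limits $\overrightarrow{\rho}(M)$ of the families are themselves discretely arranged, together with the fundamental fact that the GR-measures form a subset of a lexicographically ordered set of finite sequences in which every strictly descending chain terminates; once that global discreteness is in hand, the cross-family interleaving cannot break the relevant chain condition, and the six assertions follow.
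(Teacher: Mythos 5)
Your overall strategy is the one the paper intends: the corollary is stated with a \qed{} precisely because it is meant to follow from Corollary~\ref{corollary-approach-GR-limit} and Proposition~\ref{rhombic-tubes}, i.e.\ from the fact that within each preprojective or regular family the measures strictly increase toward the limit while within each preinjective family they strictly decrease, together with dualization via $\mu^*(M)=\mu(DM)$. Your reduction to family-wise monotonicity, and your observation that the real issue is the passage from single families to the union over all families, are both on target.

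However, the way you close that gap does not work. You assert that an infinite descending chain is impossible ``because GR-measures are built from an order on finite strings satisfying a descending chain condition.'' The order $<_{\mathcal F}$ on finite sequences has no descending chain condition: $(1)>_{\mathcal F}(2)>_{\mathcal F}(3)>_{\mathcal F}\cdots$ and $(1,1)>_{\mathcal F}(1,2)>_{\mathcal F}\cdots$ are infinite strict descents, and indeed the corollary itself asserts that $\mathcal G(\mathcal Q)$ is \emph{not} artinian, so no ambient DCC can be the reason $\mathcal G(\mathcal P)$ is. Likewise, ``separation of the limits'' cannot rescue the union argument: if there were infinitely many increasing families whose limits themselves formed a descending sequence, one could thread an infinite descending chain through them no matter how separated the limits are. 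What actually saves the argument is finiteness: each of the six sets is a union of \emph{finitely many} strictly monotone $\omega$-chains. For $\mathcal P$ and $\mathcal Q$ there are only finitely many families; for $\mathcal R$ the two exceptional tubes contribute finitely many families and all homogeneous tubes contribute the single measure sequence $\mu_H\cdot(h)^{q-1}$, $q\geq 1$. A finite union of well-ordered chains of type $\omega$ in a totally ordered set is artinian (an infinite descending chain would have infinitely many terms in one chain by pigeonhole), and each chain witnesses the failure of the noetherian condition; the dual statement handles the other three sets, provided you are careful that $\mathcal G^*$ is ordered by $<_{\mathcal F^*}$, so that ``$\mathcal G^*(\mathcal P)$ artinian'' is the order-reversed, not the identical, statement about the preinjectives over $Q^{\op}$. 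Replacing your two justifications with this finiteness argument makes the proof correct.
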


\medskip
The concept of limits is compatible with Auslander-Reiten sequences:

\begin{theorem}\label{theorem-parallelogram} 
Let $0\to A\to B_1\oplus B_2\to C\to 0$ be an Auslander-Reiten sequence 
such that the middle term consists of two indecomposable summands.  
Then the rhombic limits of $A$, $B_1$, $B_2$, $C$ form a (possibly degenerate) parallelogram 
in the rhombic picture.  Moreover, the nondegenerate sides of the parallelogram 
are parallel to the $\mu$ and $\mu^*$ axes.
\end{theorem}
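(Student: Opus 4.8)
The plan is to reduce the statement to the observation that each of the two coordinates of the rhombic limit $\overrightarrow{\rho}(M)$ is constant along rays, respectively corays, so that the mesh structure of an Auslander--Reiten sequence produces a $2\times2$ pattern of ray/coray limits. Recall that the sequence $0\to A\to B_1\oplus B_2\to C\to 0$ being Auslander--Reiten means $C=\tau^{-1}A$, and that in the Auslander--Reiten quiver the four modules sit at the corners of a mesh. Since each indecomposable lies on a unique ray and a unique coray, and the two irreducible maps out of $A$ advance one step along the ray and one step along the coray, the four modules occupy the corners of an elementary square of the ray/coray grid of the tube (in the preprojective and preinjective cases the same grid is produced by the cyclic structure of $\widetilde{\mathbb A}_n$ used to define the families).

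First I would record the ray/coray combinatorics of the mesh. The square structure forces exactly two rays and two corays to occur, each for two of the four modules, with $A$ and $C$ on one diagonal and $B_1,B_2$ on the other. Concretely, up to interchanging $B_1$ and $B_2$, the module $A$ shares its ray with $B_2$ and its coray with $B_1$, while $C=\tau^{-1}A$ shares its ray with $B_1$ and its coray with $B_2$.

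The crux is to show that the first coordinate $\lim_{i\to\infty}\mu(M_i)$ depends only on the ray through $M$ and the second coordinate $\lim_{i\to\infty}\mu(DM_i)$ only on the coray. For this I would use the decomposition $\mu(M)=\init(M)\cdot\per(M)^{\mult(M)}\cdot\fin(M)$. As the family $(M_i)$ ascends its ray, the multiplicity $\mult(M_i)$ grows without bound while $\init$ and $\per$ stay fixed and the tail $\fin$ is pushed out to infinity, so by the monotone approach to the GR-limit established in Corollary~\ref{corollary-approach-GR-limit} and Proposition~\ref{rhombic-tubes} the limit is the eventually periodic word $\init\cdot\per^{\infty}$, determined by the ray-invariant data $\init$ and $\per$. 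The same analysis applied to the dual families shows that $\lim_i\mu(DM_i)$ is a coray-invariant. The main obstacle is precisely this factorization: one must verify, uniformly for regular, preprojective and preinjective families, that $\init$ and $\per$ are constant along a ray and that $\fin$ does not affect the limit, and this is where the string-and-band combinatorics and the greedy computation of $\mu$ do the real work.

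With the factorization in hand the conclusion is immediate. Writing $\overrightarrow{\rho}(M)=(\varphi(\mathrm{ray}\,M),\psi(\mathrm{coray}\,M))$ and setting $x_A=\varphi(\mathrm{ray}\,A)$, $x_C=\varphi(\mathrm{ray}\,C)$, $y_A=\psi(\mathrm{coray}\,A)$, $y_C=\psi(\mathrm{coray}\,C)$, the combinatorics of the mesh give, up to relabeling $B_1,B_2$,
\[ \overrightarrow{\rho}(A)=(x_A,y_A),\quad \overrightarrow{\rho}(B_1)=(x_C,y_A),\quad \overrightarrow{\rho}(B_2)=(x_A,y_C),\quad \overrightarrow{\rho}(C)=(x_C,y_C). \]
These are the four corners of an axis-parallel rectangle, with $A,C$ on one diagonal and $B_1,B_2$ on the other, so its sides are parallel to the $\mu$- and $\mu^*$-axes. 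The rectangle degenerates to a segment when the two ray-limits coincide, giving a segment parallel to the $\mu^*$-axis, or when the two coray-limits coincide, giving one parallel to the $\mu$-axis, and to a point when both coincide; this accounts for the possibly degenerate parallelogram in the statement.
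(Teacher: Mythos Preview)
Your overall architecture matches the paper's: the mesh of the Auslander--Reiten sequence places $A,B_1,B_2,C$ at the corners of a ray/coray square, and once the two coordinates of $\overrightarrow{\rho}$ are shown to be ray- and coray-invariants respectively, the four rhombic limits form an axis-parallel rectangle. Where you diverge is in how you justify that invariance. You route it through the IPF decomposition, arguing that $\init$ and $\per$ are constant along a ray so that $\lim_i\mu(M_i)=\init\cdot\per^\infty$ depends only on the ray, and you even flag this verification as ``the main obstacle''. The paper bypasses all of this with a one-line observation: along a ray in a tube the irreducible maps are monomorphisms, so the GR-measures increase strictly; the families of any two modules on that ray are cofinal subsequences of it and therefore share the same limit. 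Dually for corays and comeasures. This makes the proof work for Auslander--Reiten sequences in a stable tube over \emph{any} algebra (the paper explicitly drops the $\widetilde{\mathbb A}_n$ hypothesis in that section), whereas your detour through the IPF decomposition is specific to $\widetilde{\mathbb A}_n$ and costs you exactly the extra work you yourself identify as an obstacle.
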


\subsection{The Tiling Theorem}


Finally we present a result that investigates the phenomenon that
for a given tube $\mathcal T$, the sequence in which the modules
on a ray or on a coray occur can be read off from the arrangement 
of their limit points in the rhombic picture.
Namely, we have seen in Theorem~\ref{theorem-parallelogram} 
that for the modules on any ray in a given tube $\mathcal T$,
the GR-limits in the rhombic picture lie on a line $\ell$ 
which is parallel to the $\mu^*$-axis.
Under certain conditions on $Q$ (we say ``$Q$ has a widest hill''), the precise sequence of
module families on the ray can be read off from the rhombic picture by just repeating the sequence
in which their limits occur on $\ell$. 
By combining this result with the corresponding statement for corays, we obtain that the arrangement
of limit points in the rhombic picture for the families in $\mathcal T$ represents the 
sequences in which those families occur along the rays and corays in $\mathcal T$.
In this sense, the arrangement of limit points in the rhombic picture ``provides a tiling for the tube''.

\smallskip
Let $M$ be a regular module, sufficiently large so that each part of its measure 
\[\mu(M)=\init(M)\cdot\per(M)^{\mult(M)}\cdot\fin(M)\] 
is defined. Then the periodic part, $\per(M)$, which
may be of type $\mathcal L$, $\mathcal R$ or $(h)$, 
distinguishes the tube containing $M$. More precisely,
for a fixed orientation of the quiver $Q$, the sequences $\mathcal L$
and $\mathcal R$ consist of left and right hooks, respectively, 
which determine the orientation on the quiver, 
while $h=n+1$ is the dimension of a quasisimple homogeneous module. 
The integer $\mult(M)$ then gives the position of $M$ within its family. 
The remaining $\lq\lq$waist-free" part of the measure
\[\wf(M)=\init(M)\cdot\fin(M)\]
describes the position of the family within the tube. 
This is an invariant of the family, as the waist-free parts are totally ordered. 
In special cases, this ordering agrees with the ordering of the rays within the tube. In section 6.5 we will give a precise definition of a $\lq\lq$widest valley" and $\lq\lq$widest hill"  for an $\widetilde{\mathbb A}_n$ quiver.
\begin{theorem}
Suppose the quiver $Q$ of type $\widetilde{\mathbb A}_n$ has a unique
widest valley and a unique widest hill. Then for each tube,
the system of limits in the rhombic picture provides a tiling for the tube.
\end{theorem}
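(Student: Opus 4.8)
The plan is to make the word ``tiling'' precise and then split the statement into two dual directional claims together with a consistency step. Fix a tube $\mathcal T$; if $\per$ is of type $(h)$ with $h=n+1$ the tube is homogeneous of rank one, there is a single family, and nothing is to prove, so assume $\per$ is of type $\mathcal L$ or $\mathcal R$, say of rank $r$. Each regular family in $\mathcal T$ is determined by the pair consisting of its ray and its coray, and the rhombic limit $\overrightarrow{\rho}$ is constant along a family; hence the families are indexed by an $r\times r$ grid of ray-coray intersections and each carries one limit point. By Theorem~\ref{theorem-parallelogram} the limit points of the families on a common ray lie on a single line parallel to the $\mu^*$-axis, while those on a common coray lie on a single line parallel to the $\mu$-axis. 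Thus ``the system of limits provides a tiling'' means exactly that the assignment of a family to its limit point is a bijection onto this array which preserves incidences (common ray $\iff$ common vertical line, common coray $\iff$ common horizontal line) and reproduces the sequences in which rays and corays occur in $\mathcal T$.

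I would establish the ray direction first. Along a fixed ray the successive families meet the corays in their natural cyclic order, and their limit points occupy the line $\ell$ parallel to the $\mu^*$-axis. By Corollary~\ref{corollary-approach-GR-limit} and Proposition~\ref{rhombic-tubes} a regular family approaches its GR-limit from below, so the position of a family's limit on $\ell$ is controlled by the totally ordered, family-invariant waist-free data $\wf(M)=\init(M)\cdot\fin(M)$ and its dual. The role of the unique-widest-hill hypothesis is to control how this invariant changes from one family to the next along the ray, a statement about the left or right hooks underlying $\mathcal T$ as recorded by the type of $\per(M)$; it forces the $r$ limit points on $\ell$ to be pairwise distinct and ordered in exactly the cyclic order in which the corays are encountered, so that reading them along $\ell$ and repeating periodically reproduces the sequence of families on the ray. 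This is the ray statement already advertised in the introduction.

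The coray direction I would then obtain by duality. The functor $D$ interchanges rays and corays, exchanges $\mu$ with $\mu^*$ (reflecting the rhombic picture across its diagonal), and converts a widest hill of $Q$ into a widest valley. Applying $D$ to the previous paragraph and invoking the unique-widest-valley hypothesis therefore shows that, along each coray, the families occur in the cyclic order read off from the corresponding horizontal line, with positions controlled by the dual waist-free data $\wf(DM)$.

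Finally I would assemble the two directions, and this is where I expect the real work to lie. Each family sits on exactly one ray and one coray, so its limit point lies on exactly one vertical and one horizontal line, and the pair $\big(\wf(M),\wf(DM)\big)$ supplies its two grid coordinates; the crux is to prove that this pair is a complete invariant of the family within $\mathcal T$ and is strictly monotone in each grid direction, so that the map sending $M$ to $\big(\lim_i\mu(M_i),\lim_i\mu(DM_i)\big)$ is an isomorphism of ordered grids rather than merely two separately correct one-dimensional readings. Both uniqueness hypotheses are indispensable here: a repeated widest hill would allow two distinct rays to share a vertical line or to have their induced coray-orders disagree, and dually for a repeated widest valley, either of which would break injectivity or the incidence structure. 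Once this consistency is verified, the parallelograms furnished by Theorem~\ref{theorem-parallelogram} from the meshes of $\mathcal T$ are precisely the tiles of the reconstructed grid, so the arrangement of limit points tiles the tube; degenerate parallelograms, where one side has length zero, require only the obvious adjustment.
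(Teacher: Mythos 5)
Your overall strategy --- reduce the tiling statement to two one-dimensional claims, one along rays and one along corays, control each by the family-invariant waist-free data, and obtain the second from the first by duality --- is the same as the paper's, which proves a technical lemma for corays (the order of the families along a coray agrees with the order of their waist-free parts, provided the Greedy Algorithm starting point is constant along the coray) and then invokes its dual. However, your argument contains a genuine error at the point where you claim that the uniqueness hypotheses force the $r$ limit points on the line $\ell$ to be pairwise distinct, and consequently that $M\mapsto\big(\lim_i\mu(M_i),\lim_i\mu^*(M_i)\big)$ is a bijection onto an $r\times r$ grid. This is false even under the hypotheses of the theorem: for the one-sink, one-source quiver of Example~\ref{second-example} (which trivially has a unique widest valley and a unique widest hill), the right tube has nine families but only four rhombic limit points, because on each ray two of the three families approach the same take-off colimit. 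The ``system of limits'' that tiles the tube is therefore not the raw set of limit points but its refinement by the staircase ordering $\stairleq$ (equivalently, by the waist-free parts $\wf$, $\wf^*$, via Proposition~\ref{proposition-equivalent-orderings}); families sharing a limit are separated by the direction from which that limit is approached. Your assembly step conflates the pair $\big(\wf(M),\wf^*(M)\big)$, which does separate families, with the pair of limits, which does not.

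A second gap is that the directional claims themselves are asserted rather than proved. Saying that the widest-hill hypothesis ``controls how the waist-free invariant changes from one family to the next along the ray'' is exactly the content that needs an argument. The paper supplies it: with a fixed starting point for the Greedy Algorithm, the lengths of the waist-free parts follow a saw-tooth pattern along a coray, decreasing until the starting point jumps by one period, and (by the case analysis in (L5), (R5$'$), (R5$''$)) decreasing length of the waist-free part corresponds to increasing measure in $\mathcal F$; hence, after choosing a suitable first family, the cyclic order of the families along the coray matches the total order of their waist-free parts. Without some version of this computation your proof does not get off the ground, since it is precisely here that the unique-widest-valley and unique-widest-hill hypotheses enter: several widest valleys would make the starting point, and with it the saw-tooth pattern, jump among different sinks.
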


We give an example to illustrate its statement.

\begin{example}\label{first-example}
We consider one of the exceptional tubes for the two-sink, two-source 
$\widetilde{\mathbb A}_{4}$ quiver. 
$$\beginpicture
  \setcoordinatesystem units <0.7cm, 0.7cm>
  \put{$Q:$} at 0 0
  \multiput{$\sssize\bullet$} at 2 .5  3 -1  4 0  5 1  6 -.5  7 .5 /
  \arr{2.2 .2} {2.8 -.7}
  \arr{3.8 -.2} {3.2 -.8} 
  \arr{4.8 .8} {4.2 .2}
  \arr{5.2 .7} {5.8 -.2}
  \arr{6.8 .3} {6.2 -.3}
  \put{$a$} at 2.3 .6
  \put{$b$} at 3.3 -1.2
  \put{$c$} at 4.3 -.2
  \put{$d$} at 5.3 1.2
  \put{$e$} at 6.3 -.7
  \put{$a$} at 6.7 .6
  \setdots<2pt>
  \plot 2 1.5  2 -1.5 /
  \plot 7 1.5  7 -1.5 /
\endpicture
$$
We will see that the quiver has indeed a unique widest valley and a unique
widest hill.

$$\beginpicture
  \setcoordinatesystem units <.7cm, .5cm>
  \multiput{$\sssize\bullet$} at  6 -.5  7 .5  8 -1  
                       9 0  10 1  11 -.5  12 .5  13 -1
                       14 0  15 1  16 -.5  17 .5 /
  \arr{6.8 .3} {6.2 -.3}
  \arr{7.2 .2} {7.8 -.7}
  \arr{8.8 -.2} {8.2 -.8}
  \arr{9.8 .8} {9.2 .2}
  \arr{10.2 .7} {10.8 -.2}
  \arr{11.8 .3} {11.2 -.3}
  \arr{12.2 .2} {12.8 -.7}
  \arr{13.8 -.2} {13.2 -.8}

  \arr{14.8 .8} {14.2 .2}
  \arr{15.2 .7} {15.8 -.2}
  \arr{16.8 .3} {16.2 -.3}

  \put{$b$} at 8.4 -1.2
  \put{$b'$} at 12.5 -1.2
\setdots<2pt>
\plot 7.3 -1.5  7.3 2 /
\plot 9.7 -1.5  9.7 2 /
\plot 13.3 -2  13.3 1.5 /
\plot 15.7 -2  15.7 1.5 /
\betweenarrows {2} from 7.3 1.5  to 9.7 1.5 
\betweenarrows {2} from 13.3 -1.3  to 15.7 -1.3
\put{\tiny widest valley} at 8.5 -2.3
\put{\tiny widest hill} at 14.5 2 
\multiput{$\cdots$} at 5 0  18 0 /
\endpicture
$$

Here we draw the tube with its mouth at the top, 
to more easily display the correspondence between the arrangement 
of the rhombic limits and of the AR-sequences in the tube.

$$
\hbox{\beginpicture
\setcoordinatesystem units <0.2cm,0.2cm>
\put{} at -14 22.5
\put{} at 15 -2
\arr{-14 14}{2 -2}
\arr{-2 -2}{14 14}
\put{$\mu^*$} at 3.5 -2
\put{$\mu$} at 15 14
\setsolid
\plot 1.8 2.2  2.2 1.8 /
\put{$\ssize \vec{\mu_T}$} at 3.7 1.3
\plot -2.2 1.8  -1.8 2.2 /
\put{$\ssize \vec{\mu_T^*}$} at -3.7  1.3
\plot -6.2 5.8  -5.8 6.2 /
\plot -10.2 9.8  -9.8 10.2 /
\plot 6.2 5.8  5.8 6.2 /
\plot 10.2 9.8  9.8 10.2 /
\setdots<2pt>
\plot -2 2  10.5 14.5 /
\plot 2 2    -10.5 14.5 /
\plot -6 6  6.5 18.5 /
\plot 6 6 -6.5 18.5 /
\plot -10 10  2.5 22.5 /
\plot 10 10  -2.5 22.5 /
\setsolid
\begin{magenta}%
  \arr{0 10}{0 12}
  \arr{0 18}{0 20}
  \arr{-4 14}{-4 16}
  \arr{4 14}{4 16}
  \arr{-8 10}{-8 12}
  \arr{-4 6}{-4 8}
  \arr{0 2}{0 4}
  \arr{4 6}{4 8}
  \arr{8 10}{8 12}
  \put{$\ssize ab$} at 4.7 14 
  \put{$\ssize de$} at -3.3 14  
  \put{$\ssize ae$} at .7 18  
  \put{$\ssize db$} at .7 10  
  \put{$\ssize cb$} at  -3.3 6 
  \put{$\ssize ce$} at -7.3 10
  \put{$\ssize dc$} at  4.7 6 
  \put{$\ssize ac$} at 8.7 10
  \put{$\ssize cc$} at .7 2
\end{magenta}%
\endpicture}
\qquad
\hbox{\beginpicture
\setcoordinatesystem units <0.6cm,0.6cm>
\put{} at -2 8.6
\put{} at 6 -1
\put{$\ssize ab_2$} at 0 8
\put{$\ssize cc_1$} at 2 8
\put{$\ssize de_2$} at 4 8
\put{$\ssize db_4$} at -1 7
\put{$\ssize ac_3$} at 1 7
\put{$\ssize ce_3$} at 3 7
\put{$\ssize db_4$} at 5 7
\put{$\ssize dc_5$} at 0 6
\put{$\begin{magenta}\ssize ae_5\end{magenta}$} at 2 6
\put{$\ssize cb_5$} at 4 6
\put{$\ssize cc_6$} at -1 5
\put{$\begin{magenta}\ssize de_7\end{magenta}$} at 1 5
\put{$\begin{magenta}\ssize ab_7\end{magenta}$} at 3 5
\put{$\ssize cc_6$} at 5 5
\put{$\begin{magenta}\ssize ce_8\end{magenta}$} at 0 4
\put{$\begin{magenta}\ssize db_9\end{magenta}$} at 2 4
\put{$\begin{magenta}\ssize ac_8\end{magenta}$} at 4 4
\put{$\ssize ae_{10}$} at -1 3
\put{$\begin{magenta}\ssize cb_{10}\end{magenta}$} at 1 3
\put{$\begin{magenta}\ssize dc_{10}\end{magenta}$} at 3 3
\put{$\ssize ae_{10}$} at 5 3
\put{$\ssize ab_{12}$} at 0 2
\put{$\begin{magenta}\ssize cc_{11}\end{magenta}$} at 2 2
\put{$\ssize de_{12}$} at 4 2
\put{$\ssize db_{14}$} at -1 1
\put{$\ssize ac_{13}$} at 1 1
\put{$\ssize ce_{13}$} at 3 1
\put{$\ssize db_{14}$} at 5 1
\arr{.3 7.7}{.7 7.3}
\arr{2.3 7.7}{2.7 7.3}
\arr{4.3 7.7}{4.7 7.3}
\arr{1.3 7.3}{1.7 7.7}
\arr{3.3 7.3}{3.7 7.7}
\arr{-.7 7.3}{-.3 7.7}
\arr{.3 6.3}{.7 6.7}
\arr{2.3 6.3}{2.7 6.7}
\arr{4.3 6.3}{4.7 6.7}
\arr{1.3 6.7}{1.7 6.3}
\arr{3.3 6.7}{3.7 6.3}
\arr{-.7 6.7}{-.3 6.3}
\arr{.3 5.7}{.7 5.3}
\arr{2.3 5.7}{2.7 5.3}
\arr{4.3 5.7}{4.7 5.3}
\arr{1.3 5.3}{1.7 5.7}
\arr{3.3 5.3}{3.7 5.7}
\arr{-.7 5.3}{-.3 5.7}
\arr{.3 4.3}{.7 4.7}
\arr{2.3 4.3}{2.7 4.7}
\arr{4.3 4.3}{4.7 4.7}
\arr{1.3 4.7}{1.7 4.3}
\arr{3.3 4.7}{3.7 4.3}
\arr{-.7 4.7}{-.3 4.3}
\arr{.3 3.7}{.7 3.3}
\arr{2.3 3.7}{2.7 3.3}
\arr{4.3 3.7}{4.7 3.3}
\arr{1.3 3.3}{1.7 3.7}
\arr{3.3 3.3}{3.7 3.7}
\arr{-.7 3.3}{-.3 3.7}
\arr{.3 2.3}{.7 2.7}
\arr{2.3 2.3}{2.7 2.7}
\arr{4.3 2.3}{4.7 2.7}
\arr{1.3 2.7}{1.7 2.3}
\arr{3.3 2.7}{3.7 2.3}
\arr{-.7 2.7}{-.3 2.3}
\arr{.3 1.7}{.7 1.3}
\arr{2.3 1.7}{2.7 1.3}
\arr{4.3 1.7}{4.7 1.3}
\arr{1.3 1.3}{1.7 1.7}
\arr{3.3 1.3}{3.7 1.7}
\arr{-.7 1.3}{-.3 1.7}
\arr{.3 .3}{.7 .7}
\arr{2.3 .3}{2.7 .7}
\arr{4.3 .3}{4.7 .7}
\arr{1.3 .7}{1.7 .3}
\arr{3.3 .7}{3.7 .3}
\arr{-.7 .7}{-.3 .3}
\setdots<2pt>
\plot -1 8  -.5 8 /
\plot .5 8  1.5 8 /
\plot 2.5 8  3.5 8 /
\plot 4.5 8  5 8 /
\setsolid
\plot -1 8.5  -1 7.5 /
\plot -1 6.5  -1 5.5 /
\plot -1 4.5  -1 3.5 /
\plot -1 2.5  -1 1.5 /
\plot -1 .5   -1 -1 /
\plot 5 8.5  5 7.5 /
\plot 5 6.5  5 5.5 /
\plot 5 4.5  5 3.5 /
\plot 5 2.5  5 1.5 /
\plot 5 .5   5 -1 /
\multiput{$\vdots$} at 1 -.5  3 -.5 /
\begin{magenta}%
  \plot -.6 4  2 6.6  4.6 4  2 1.4  -.6 4 /
  \plot -1.6 5  -1 4.4  1.6 7  0 8.6 /
  \plot .8 8.6  2 7.4  3.2 8.6 /
  \plot 4 8.6  2.4 7  5 4.4  5.6 5 /
  \plot 5.6 3  5 3.6  2.4 1  3.6 -.2 /
  \plot -1.6 3  -1 3.6  1.6 1  .4 -.2 /
\end{magenta}%
\endpicture}
$$

\end{example}

\subsection{Organization of this paper}

In Section~\ref{section-preliminaries} we review and slightly modify the basic 
terminology about the Gabriel-Roiter measure.
In particular, the sequences defining the measure in \cite{Ringel05} are obtained
from the sequences that we will use by taking partial sums.
We also adapt terminology for dealing with string and band modules to the situation where
the underlying quiver is of type $\widetilde{\mathbb A}_n$ 
and  carries a fixed orientation.

\smallskip
Many of the results in our paper will be obtained through a careful analysis of the 
Greedy Algorithm, which we define and discuss in Section~\ref{section-gr-measures}.
With this tool we can decompose the Gabriel-Roiter measure of a module into initial,
periodic and final parts.

\smallskip
The special shape of the quiver of type $\widetilde{\mathbb A}_n$ allows us to define
in Section~\ref{section-families} 
the family $(M_i)$ of a string module $M$ as the modules on the intersection of the ray 
and the coray (or the two rays or corays) given by $M$.  We characterize the components of the 
Auslander-Reiten quiver in terms of properties of the limits of the GR-measures
of the modules $M_i$ and $DM_i$ and obtain a proof for 
Proposition~\ref{proposition-type-AR-component}.

\smallskip
In Section~\ref{section-rhombic} we present a general result. There we consider families of modules in a stable tube for an
arbitrary algebra.  We show that in the rhombic picture, the modules in a family approach
the rhombic limit from below and give the proof for Theorem~\ref{theorem-parallelogram}.

\smallskip
In the final Section~\ref{section-tiling} we return to the case where the quiver has type
$\widetilde{\mathbb A}_n$.   We distinguish the tubes in terms of the periodic parts of the
GR-measures of their modules.  For each tube, we refine the rhombic limit to make 
the parallelogram in Theorem~\ref{theorem-parallelogram} non-degenerate and show that for 
suitable quivers, the system of limit points in the rhombic picture gives rise to a tiling
of the tube.

\section{Preliminaries}\label{section-preliminaries}

\subsection{The Gabriel-Roiter Measure}
The Gabriel-Roiter measure of a module was introduced by Ringel in  \cite{Ringel05}.  
There, one considers all possible chains of indecomposable submodules and imposes an 
ordering on the lists of their lengths.  Here, we begin by making a slight modification to 
Ringel's definition. Namely, instead of recording the lengths of the modules in an indecomposable filtration, 
we list the lengths of the subsequent quotients.  The ordering of the measures remains the same as Ringel's.  
We recall the details.

\smallskip
Let $\mathcal F$ denote the set of all (finite or infinite) sequences $(a_i)$
with entries in $\mathbb N=\{1, 2,\ldots\}$.  Define a total ordering on $\mathcal F$
by putting $(a_i)<_{\mathcal F}(b_i)$ if either $(a_i)$ is a proper subsequence of $(b_i)$ or 
if there exists $\ell \in \mathbb N$ with $a_i=b_i$ for $i<\ell$ and $a_\ell>b_\ell$. 
Note that with this ordering, $\mathcal F$ is a compact and complete metric space.
Consider the map $e:\mathcal F\to \mathbb R$ given by 
$(a_i)\mapsto \sum_\ell 2^{-\sigma_\ell}$ where $\sigma_\ell=\sum_{i\leq\ell}a_i$ is the 
partial sum.  Then $(a_i) <_{\mathcal F}(b_i)$ in $\mathcal F$ if and 
only if $e(a_i)<e(b_i)$ in $\mathbb R$ unless one of the sequences is infinite
and has almost all entries equal to one.

\smallskip
Now let $\Lambda$ be an artin algebra and let $M$ be a $\Lambda$-module.  A sequence $0=M_0\subsetneq M_1\subsetneq \cdots$
of submodules of $M$ where each $M_i$ with $i>0$ is indecomposable
and of finite length is called an {\it indecomposable filtration for} $M$.
The {\it GR-measure} $\mu(M)$ is defined as the supremum taken in $\mathcal F$
of the sequences $(|M_i/M_{i-1}|)$ where
$0=M_0\subsetneq M_1\subsetneq \cdots$ is an indecomposable filtration for $M$.  Whenever 
the supremum is attained,
this chain is called a {\it GR-filtration}.  
As noted in \cite{Ringel05}, if $M$ is finitely generated, 
then a GR-filtration exists if and only if $M$ is indecomposable.

\smallskip
Dually we denote by $\mathcal F^*$ the set $\mathcal F$ with the opposite ordering,
so $(a_i)\flt^* (b_i)$ holds if and only if $(a_i)\fgt (b_i)$. 
We define an {\it indecomposable cofiltration for} $M$
as a sequence $\cdots M_2\subsetneq M_1\subsetneq M_0=M$
where each factor $M/M_i$ for $i\geq 1$ is indecomposable of finite length.
Then the {\it GR-comeasure} $\mu^*(M)$ is the infimum of the sequences
$(|M_{i-1}/M_i|)$ where the $M_i$ form an indecomposable cofiltration for $M$,
and where the infimum is taken in $\mathcal F^*$.
Clearly, if $M$ is a finite dimensional module then $\mu^*(M) = \mu(DM)$.
Here $DM$ is the dual module, which is defined over the opposite algebra.

\subsection{String Modules for $\widetilde{\mathbb A}_n$} For an arbitrary quiver $Q$, the GR-measure of a $kQ$-module may be quite difficult to compute.  But as we will see, when $Q$ is a quiver of type $\widetilde {\mathbb A}_n$, the string algebra structure of $kQ$ (see, for example, \cite{ButlerRingel87} and \cite{Krause91}) allows us to construct an explicit algorithm for computing GR-measures.   For such a quiver $Q$, let $f:\widetilde Q\to Q$ be a universal covering,
so that $\widetilde Q$ is a quiver of type $\mathbb A_\infty^\infty$ with 
vertex set $\mathbb Z$ and the orientation of the 
arrow between $i$ and $i+1$ is given by the arrow between $f(i)$ and $f(i+1)$.
Note that by fixing a covering and labeling the vertices in $\widetilde Q$ with integers, we have 
implicitly chosen an {\it orientation} for $\widetilde{\mathbb A}_n$.  Recall that a string module is determined by its starting point and length. Thus, the covering $f$ induces a one-to-one correspondence between string modules for $kQ$ and intervals in $\widetilde Q$, up to the shift by a fundamental domain.  Moreover, the Auslander-Reiten component to which an indecomposable module belongs is completely determined by its string or band structure.  We call a string preprojective (preinjective, left regular, right regular) if in the corresponding interval in $\widetilde Q$ the two arrows in $\widetilde Q$ neighboring the interval but lying outside of it point towards the interval (away from the interval, to the left, to the right).  Then the preprojective modules, the preinjective modules, and the modules in the two exceptional tubes
are string modules corresponding to preprojective, preinjective, left regular, and right regular strings.  The band modules lie in homogeneous tubes.   Each string is the domain of at most two irreducible morphisms, corresponding to the endpoints of the interval.  At each endpoint we can perform at most one of the following two operations.  If there is an incoming arrow at an endpoint, we can $\lq\lq$ add a hook", by expanding the string to the next incoming arrow.  If there is an outgoing arrow at an endpoint, we $\lq\lq$delete a cohook" by contracting the string to the endpoint of the previous outgoing arrow.  As a consequence, all irreducible maps in the preprojective (preinjective) component are monomorphisms (epimorphisms) since all of the maps are formed by adding hooks (deleting cohooks).

\section{Structure of GR-Measures for $\widetilde{\mathbb A}_n$}\label{section-gr-measures}

\subsection{The Greedy Algorithm for $\widetilde{\mathbb A}_n$ Strings}

\smallskip

\smallskip
Now suppose that $M$ is a string module and that 
$\mathcal M: 0=M_0\subsetneq M_1\subsetneq \cdots \subsetneq M_n=M$ 
is a GR-filtration, such that each $M_i$ is a string module.  This is the case for preprojective modules and for non-homogeneous regular modules.  Then there is a sequence of intervals
$\emptyset=I_0\subsetneq I_1\subsetneq \cdots \subsetneq I_n=I$ in $\widetilde Q$ corresponding 
to the filtration $\mathcal M$.  
Since $\mathcal M$ defines the GR-measure, it follows that $I_1$ consists only 
of a sink in $I$, and each $I_{i+1}$ is obtained from $I_i$ by adding
a left hook or a right hook in $I$. Bearing this correspondence in mind, we now present the
$\lq\lq$Greedy Algorithm", 
which provides an efficient tool for the computation of the GR-measure of $M$.

\smallskip

\begin{enumerate}
\item[(1)] Begin by choosing a sink $\sigma$ in $I$.  Thus, the first entry in the measure is $(1)$.
\item[(2)] Next, compute the sequences $(\lambda_i)$ and $(\rho_i)$ of 
  lengths of left hooks and right hooks, respectively,
  starting at the sink $\sigma$. 
\item[(3)] If $(\lambda_i)<_{\mathcal F}(\rho_i)$, add $\rho_1$ to the measure
  and delete the first entry in the sequence $(\rho_i)$.
  If the inequality is reversed, add $\lambda_1$ to the measure and delete this entry in $(\lambda_i)$.    If $(\lambda_i)=(\rho_i)$, we may make the choice freely.
\item[(4)] Repeat (3) until both sequences $(\lambda_i)$ and $(\rho_i)$ are exhausted.
  This is the highest measure based on the starting point $\sigma$.
\item[(5)] Repeat (1-4) until each sink in $I$ has been used as a starting point. The supremum of the 
resulting sequences is, therefore, the GR-measure of $M$.
\end{enumerate}

\medskip\noindent{\it Example:} Consider the quiver $Q$ of type $\widetilde{\mathbb A}_4$ 
from Example~\ref{first-example},
and the string module $ce_{18}$, as pictured.

$$\beginpicture
  \setcoordinatesystem units <0.5cm, 0.5cm>
  \put{$ce_{18}:$} at 1 0
  \multiput{$\sssize\bullet$} at 4 0  5 1  6 -.5  7 .5  8 -1  
                       9 0  10 1  11 -.5  12 .5  13 -1
                       14 0  15 1  16 -.5  17 .5  18 -1
                       19 0  20 1  21 -.5 /
  \arr{4.8 .8} {4.2 .2}
  \arr{5.2 .7} {5.8 -.2}
  \arr{6.8 .3} {6.2 -.3}
  \arr{7.2 .2} {7.8 -.7}
  \arr{8.8 -.2} {8.2 -.8}
  \arr{9.8 .8} {9.2 .2}
  \arr{10.2 .7} {10.8 -.2}
  \arr{11.8 .3} {11.2 -.3}
  \arr{12.2 .2} {12.8 -.7}
  \arr{13.8 -.2} {13.2 -.8}

  \arr{14.8 .8} {14.2 .2}
  \arr{15.2 .7} {15.8 -.2}
  \arr{16.8 .3} {16.2 -.3}
  \arr{17.2 .2} {17.8 -.7}
  \arr{18.8 -.2} {18.2 -.8}
  \arr{19.8 .8} {19.2 .2}
  \arr{20.2 .7} {20.8 -.2}

  \put{$b$} at 8.4 -1.2
  \put{$b'$} at 13.5 -1.2
  \put{$b''$} at 18.6 -1.2 
\endpicture
$$

Note that the module $ce_{18}$ is regular and occurs in an exceptional tube.
Thus any submodule of $ce_{18}$ is either preprojective, or occurs in the same tube.
In each case, the submodule is a string module.  
Hence, the Greedy Algorithm will produce the GR-measure.  Moreover, we need only run the algorithm beginning from sinks that lie in $\lq\lq$widest valleys", as this will maximize the small entries at the beginning of the measure.  Then running the algorithm, we find the following.

\medskip
\begin{tabular}{c|c|c|c}
starting & left hook              & right hook          & candidate for \\ 
point    & sequence $(\lambda_i)$ & sequence $(\rho_i)$ & GR-measure  \\ \hline
$b$ & $(2,2)$ & $(1,2,2,1,2,2,1,2)$ & $(1,1,2,2,1,2,2,1,2,2,2)$ \\
$b'$ & $(2,3,2,2)$ & $(1,2,2,1,2)$ & $(1,1,2,2,1,2,2,3,2,2)$ \\
$b''$ & $(2,3,2,3,2,2)$ & $(1,2)$ & $(1,1,2,2,3,2,3,2,2)$ 
\end{tabular}\\

\smallskip
Comparing the candidates generated by each starting point, we find that
$\mu(ce_{18}) = (1,1,2,2,1,2,2,1,2,2,2)$.\\

\smallskip

Looking back at the example, we notice that the GR-measure
$\mu(ce_{18})$ has a periodic part in the center.
In fact, depending on the starting point of the period, there are 
several choices for a decomposition of the measure into an initial part, a periodic part
and a  terminal part:
\begin{align*}
\mu(ce_{18})=(1,1,2,2,1,2,2,1,2,2,2) & =  (1;122-122-122;2) \\
                                    & =  (11;221-221;222)  \\
                                    & =  (112;212-212;22)
\end{align*}

This partitioning of the GR-measure is typical, as we will see in the next section.
Moreover, we will see that there is a canonical choice for the periodic part of the measure.

\subsection{The Periodic Part.}For a finite sequence of natural numbers $\mathcal K=(k_1,\ldots,k_s)$, 
we denote by $\mathcal K_u=(k_{u+1},k_{u+2},\ldots,k_u)$ the {\it rotation
by} $u$ where the subscripts are taken modulo $s$.

\begin{lemma}
\label{lemma-tails-have-high-measure}
  Let $\mathcal K=(k_1,\ldots,k_s)$ be a finite sequence of natural
  numbers such that
  \begin{enumerate}
  \item $\mathcal K$ is minimal in $\mathcal F$ among its rotations
    $\mathcal K_u$ and
  \item $\mathcal K$ is not a power of a proper subsequence.
  \end{enumerate}
  Then for each $1\leq u\leq s-1$, the 
  tail $(k_{u+1},\ldots,k_s)$ of $\mathcal K$ 
  is strictly larger in $\mathcal F$ than $\mathcal K$.
\end{lemma}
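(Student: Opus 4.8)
The plan is to read the conclusion as saying that every proper nonempty suffix $(k_{u+1},\dots,k_s)$ of $\mathcal K$ is strictly $\fgt$ than $\mathcal K$, and to compare each such suffix with $\mathcal K$ by routing the comparison through the rotation $\mathcal K_u=(k_{u+1},\dots,k_s,k_1,\dots,k_u)$, of which the suffix is exactly the length-$(s-u)$ prefix. First I would record two routine facts about $\flt$. The decisive comparisons below all take place between words of the \emph{same} length, where only the second clause in the definition of $\flt$ can apply; from this it follows at once that prepending, or appending, a common block to two equal-length words preserves a strict inequality, since the first differing position is unchanged. Second, hypothesis (2) forces the $s$ rotations $\mathcal K=\mathcal K_0,\mathcal K_1,\dots,\mathcal K_{s-1}$ to be pairwise distinct, so hypothesis (1) upgrades to the strict inequalities $\mathcal K\flt\mathcal K_u$ for $1\le u\le s-1$.

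The core step is then short. Fix $u$, put $m=s-u$, and let $T=(k_{u+1},\dots,k_s)$, so that $T$ consists of the first $m$ entries of $\mathcal K_u$. Because $\mathcal K$ and $\mathcal K_u$ have equal length, $\mathcal K\flt\mathcal K_u$ must hold through the second clause: at the first index $p$ where they differ, $\mathcal K$ carries the \emph{larger} entry. If $p\le m$, then $T$ and $\mathcal K$ agree before $p$ and differ at $p$, where the entry of $T$ equals that of $\mathcal K_u$ and so is strictly smaller than that of $\mathcal K$; the second clause then reads off $T\fgt\mathcal K$, which is the assertion we want. Hence the whole lemma reduces to excluding the alternative $p>m$, i.e.\ that $\mathcal K$ and $\mathcal K_u$ agree on their first $m$ entries --- equivalently, that $\mathcal K$ has period $u$, that is, a border (a common proper prefix and suffix) of length $m$.

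The hard part, and the only place where both hypotheses are genuinely used, is ruling out such a border; this is the classical fact that a primitive word minimal among its rotations is unbordered, and the one thing to be careful about is keeping the directions of $\flt$ consistent. I would argue by contradiction: a border of length $m$ lets us write $\mathcal K=BZ=WB$ with $|B|=m$ and $|W|=|Z|=u$, and distinctness of the rotations gives $W\ne Z$. Moving the suffix $B$ to the front gives $\mathcal K_u=BW$, so $\mathcal K=BZ\flt BW=\mathcal K_u$ yields $Z\flt W$ after deleting the common prefix $B$; moving the prefix $B$ to the back gives $\mathcal K_m=ZB$, and appending the common block $B$ to $Z\flt W$ gives $ZB\flt WB$, that is $\mathcal K_m\flt\mathcal K$ --- contradicting the minimality of $\mathcal K$. (The degenerate case $u=m$ is automatically covered, since it would force $B=W=Z$ against $W\ne Z$.) Once the border is excluded we are always in the case $p\le m$, so the core step settles every $u$ simultaneously and the lemma follows.
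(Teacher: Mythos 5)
Your proof is correct. It shares the paper's overall strategy: compare the tail $T=(k_{u+1},\ldots,k_s)$ with $\mathcal K$ by viewing $T$ as the length-$(s-u)$ prefix of the rotation $\mathcal K_u$, note that the desired inequality $T\fgt\mathcal K$ drops out immediately unless $\mathcal K$ and $\mathcal K_u$ agree on their first $s-u$ entries, and then rule out that ``border'' case. The two arguments diverge only at the border-exclusion step. The paper runs an entry-by-entry induction, playing the minimality of $\mathcal K$ against both $\mathcal K_u$ and $\mathcal K_{s-u}$ to force $\mathcal K=\mathcal K_{s-u}$, and then invokes primitivity (hypothesis (2)) for the contradiction. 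You instead use the classical unbordered-Lyndon-word swap: from $\mathcal K=BZ=WB$ and $BZ\flt BW=\mathcal K_u$ you strip the common prefix to get $Z\flt W$ and reassemble to $\mathcal K_{s-u}=ZB\flt WB=\mathcal K$, contradicting minimality (hypothesis (1)) directly, with primitivity used earlier only to make the rotations pairwise distinct and hence all inequalities strict. Your version is shorter and makes the role of each hypothesis more transparent; note, though, that your distinctness claim rests on the standard fact that a word equal to a nontrivial rotation of itself is a power of a proper subword --- the paper leans on exactly the same fact in its last line, but since you use it as a load-bearing first step you should spell it out (e.g.\ via the subgroup of $\mathbb Z/s$ of periods). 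Both proofs correctly navigate the one real trap here, namely that in the ordering $\flt$ the sequence carrying the \emph{larger} entry at the first point of difference is the \emph{smaller} one.
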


\begin{proof}
Suppose that $(k_{u+1},\ldots,k_s)<_{\mathcal F}\mathcal K$ for some
$1\leq u\leq s-1$.  Together with the minimality of $\mathcal K$ among the
$\mathcal K_i$ we obtain
$$(k_{u+1},\ldots,k_s)<_{\mathcal F} (k_1,\ldots,k_s)
                    <_{\mathcal F} (k_{u+1},\ldots,k_u),$$
so $k_1=k_{u+1},\ldots,k_{s-u}=k_s\qquad(*)$.

\smallskip
We show by induction on $\ell=1,\ldots,s$ that $k_i=k_{s-u+i}$ holds
for each $1\leq i\leq \ell$.
The inequality $\mathcal K<_{\mathcal F}\mathcal K_u$ together with
$(*)$ and the induction assumption yields that $k_{s-u+\ell}\geq k_\ell$.
The inequality $\mathcal K<_{\mathcal F}\mathcal K_{s-u}$
together with the induction assumption implies $k_\ell\geq k_{s-u+\ell}$,
which proves the claim. 

\smallskip
Since $\mathcal K$ is not a power of a proper subsequence, 
we deduce that $u\equiv0\mod s$, finishing the proof of the Lemma.
\end{proof}

We apply this lemma to the sequences 
$\mathcal L=(\ell_i)_{i=1,\ldots,s}$ and $\mathcal R=(r_i)_{i=1,\ldots,t}$ 
of lengths of left hooks and of right hooks in $\widetilde Q$, 
respectively, covering in each case one period
(the quiver $Q$ may consist of several periods).
We assume that $\mathcal L$ and $\mathcal R$ are rotated such that
they are minimal among the $\mathcal L_u$ and $\mathcal R_u$, respectively.  The lemma then reveals a useful, albeit counterintuitive, fact.  If we begin with a minimally rotated hook sequence and $\lq\lq$bite off" a hook from the beginning of the sequence, the remaining partial hook sequence is larger in $\mathcal F$ than the complete one.  We now see what is so greedy about our algorithm.  According to Lemma \ref{lemma-tails-have-high-measure}, once the first hook has been taken from, say, $\mathcal R$, all subsequent hooks from $\mathcal R$ and its full iterations will be taken.

\medskip
Note now that $Q$ is symmetric if and only if $\mathcal L=\mathcal R$
(since each of the sequences determines and is determined by $\widetilde Q$).
If $Q$ is not symmetric, one direction is distinguished.

\begin{definition}  Let $Q$ be an asymmetric quiver and suppose that $\mathcal L$ and $\mathcal R$ are its mimimally rotated left and right hook sequences. We say the
{\bf take-off direction} is to the right (left) if
$\mathcal L<_{\mathcal F}\mathcal R$ ($\mathcal R<_{\mathcal F}\mathcal L$).
\end{definition}

\smallskip
Thus, whenever the Greedy Algorithm can choose between a sequence of 
left hooks starting with $\mathcal L$ and a sequence of right hooks
starting with $\mathcal R$, then it will move in the take-off
direction. In light of this, we introduce the following convention for the reminder of this paper.
From now on we will assume that either $Q$ is symmetric or
that the orientation is chosen such that $\mathcal L <_{\mathcal F}\mathcal R$
holds.  That is, if $Q$ is not symmetric, we assume without loss of generality that {\it its take-off direction is to the right.} In the next section, we will see that the GR-measure has a characteristic decomposition, in which these hook sequences play and important role. Exactly one of the hook sequences may appear strategically in a module's measure, and for a regular module this will distinguish the tube in which it resides. Moreover, the exceptional tubes behave distinctly with respect to the partition of the module category into the take-off, central, and landing parts.

\begin{example}
Returning to the example in the previous section, 
the right hooks have lengths 1, 2, 2, 
and the left hooks have lengths 3, 2.  Hence, the hook length sequences that are
minimal among their rotations 
are $\mathcal R=(2,2,1)$ and  $\mathcal L=(3,2)$. We see that the take-off direction is 
to the right and that the canonical choice for the decomposition is $\mu(ce_{18}) =  (1,1;\,2,2,1,2,2,1;\,2,2,2)$.
\end{example}

\medskip
\subsection{The IPF Decomposition of the GR-Measure.}

We now show that the GR-measure $\mu$ of a module $M$, with dim$M$ sufficiently large, consists of three distinct parts: an {\it initial part}, a {\it periodic part}, 
and possibly a {\it final part}.  That is, 
\[\mu(M)=\init(M)\cdot \per(M)^{\mult(M)}\cdot\fin(M)\] 

\noindent Here we use multiplicative notation to indicate concatenation of sequences. We will see that the periodic part of the GR-measure may be of type 
$\mathcal L$, $\mathcal R$ or $(h)$ where $h=|Q_0|$.  

\smallskip

Suppose that $\mathcal M: 0=M_0\subsetneq M_1\subsetneq \cdots \subsetneq M_n=M$ is a 
GR-filtration of $M$.  We call the module $M_{n-1}$ a {\it GR-submodule} of $M$.  Note that $\mu(M)=\mu(M_{n-1})\cdot$(dim\,$M/M_{n-1})$.  
We first consider modules $M$ such that the GR-submodule of $M$ is a string module.  
This includes all preprojective modules and
all non-homogeneous regular modules.  We have two cases to consider.

\smallskip
Case 1:  The hook length sequences $\mathcal L$ and $\mathcal R$ are not equal.  Recall that we assume then that the take-off direction is to the right.  Given the starting point $b$, a candidate for the GR-measure produced by the Greedy Algorithm has the following form:
\begin{enumerate}
\item First, there is an initial part.  It consists of 
  a leading 1, some of the $\lambda_i$'s before the first occurance
  of $\mathcal L$ to the left of $b$, and all $\rho_j$ 
  before the first occurance of $\mathcal R$ to the right of $b$.
  Note that the length of the initial part is at most $s+t-1$.
\item Next, according to Lemma \ref{lemma-tails-have-high-measure}, there is a periodic part consisting of all full iterations of $\mathcal R$ occurring to the right of $b$.  
\item At this point, there may occur some of the remaining parts of $(\rho_j)$,
  and all the remaining $\lambda_i$ before the start of the first $\mathcal L$.
\item Lemma~\ref{lemma-tails-have-high-measure} now tells us that once the first hook is taken from $\mathcal L$, all subsequent hooks from $\mathcal L$ and its full iterations will be taken.
\item In the end, there may be a final part.
\end{enumerate}

\smallskip

Now we vary the starting point:

\smallskip
If the measure of the part starting at (3) above is less than $\mathcal R$,
we obtain a larger measure by moving the starting point by one period
to the left, thus 
expanding the part in (2) at the expense of (4).  This process can be repeated 
until no copy of $\mathcal L$ remains in the measure.  Conversely, if the measure of the part starting at (3) above is larger than $\mathcal R$, 
we obtain a larger measure by moving the starting point by one period to the right, expanding (4) at the expense of (2).  
As a consequence, we obtain that for a GR-measure,
either the part in (2) or the part in (4) is empty.  Hence the measure $\mu(M)$
consists of an initial part, $\init(M)$, a periodic part, $\per(M)$, which repeats $\mult(M)$ times, and then possibly a final part, $\fin(M)$. That is,

\[\mu(M)=\init(M)\cdot\per(M)^{\mult(M)}\cdot\fin(M)\]


\medskip
Case 2: The hook length sequences $\mathcal L$ and $\mathcal R$ are equal; i.e., the quiver is symmetric.  Then given a starting point $b$, the candidate for measure produced by the Greedy Algorithm has the following form:

\begin{enumerate}
\item There is an inital part, consisting of a leading 1, 
  and all entries from the sequences $(\lambda_i)$ and $(\rho_j)$ 
  leading up to the first occurance of $\mathcal L=\mathcal R$. 
\item The central part is periodic consisting of all sequences $\mathcal R$
  in $(\lambda_i)$ and $(\rho_j)$. 
\item At the end, there may be a final part.
\end{enumerate}

Again, the GR-measure is obtained by choosing a starting point suitably. 
And, again, the measure decomposes as in the previous case.

\bigskip
Now we consider the situation in which the GR-submodule of $M$ is a band module.  
Then $M$ contains in its GR-filtration a quasi-simple homogeneous module $H$
\cite[Corollary~4.5]{Chen08b} or \cite[Theorem B]{Ringel10}.

In \cite{Ringel05} it was shown that if the base field is infinite, 
then $\mu_H$ is not a measure of finite type.  Hence, $\mu_H$ is not a take-off measure.  
It follows that $\mu_H$ exceeds the GR-measure of any preprojective module.

\smallskip
Next we show that a homogeneous module of quasi-length $q$ has measure 
$$\mu(H[q]) \;=\; \mu_H\cdot (h)^{q-1},$$
where $|Q_0|=h=n+1$. For this we use induction on $q$.
The case where $q=1$ has been handled above.  Suppose for the induction step that $\mu(H[q])$
is as  claimed.  Since the only submodules of $H[q+1]$ are either of the form $H[u]$ for some $u\leq q$, or are preprojective and hence have smaller
GR-measure, 
we see that the GR-submodule of $H[q+1]$ is $H[q]$.  The claim follows.  Note that in this situation the terminal part of the measure is empty.\\

It remains for us to compute the measure of a preinjective module. Since all irreducible maps in the preinjective component are epimorphisms, the GR-submodule $X$ of a preinjective module is either preprojective or regular. However, if $X$ is regular, it may be either a string module or a band module. If $X$ is a string module, then the GR-measure of $M$ is computed by the Greedy Algorithm. If $X$ is a band module, then the following lemma is useful.

\begin{lemma}\label{lemma-preinjectives} 
Suppose $M$ is a preinjective module of dimension $qh+r$, where $h=|Q_0|$, $q\geq1$, and $0< r\leq h-1$. Then there is a (homogeneous) band module $N$ of dimension $qh$ that embeds as a submodule of $M$.
\end{lemma}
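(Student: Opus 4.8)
The plan is to work on the universal cover and build $N$ by hand from the string data of $M$, then recognize it as an indecomposable homogeneous band module. Write $M=M(I)$ for the interval $I=[a,b]$ in $\widetilde Q$, with basis $z_a,\dots,z_b$. Since $M$ is preinjective, the two arrows of $\widetilde Q$ bordering $I$ point away from $I$, so the edge at $a$ is oriented $a\to a-1$ and the edge at $b$ is oriented $b\to b+1$. Because $|I|=qh+r$, the interval winds $q$ full times around the cycle and then runs through a partial lap through the $r$ surplus vertices $\bar a,\dots,\overline{a+r-1}$, occupying the positions $a+qh,\dots,b$. The only dimension vector available to a homogeneous band module of total dimension $qh$ is $q\delta$, and $\underline{\dim}M-q\delta$ is supported with multiplicity one on exactly these $r$ surplus vertices; so I am looking for a submodule $N$ that retains all of $M$ at the non-surplus vertices and drops one dimension at each surplus vertex.

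First I would produce a submodule of the correct dimension vector. The sub-interval $[a,a+qh-1]$ covers every vertex exactly $q$ times, and the break between $a+qh-1$ and $a+qh$ sits at an edge which, by periodicity, is oriented exactly like the border edge at $a$; that arrow therefore runs from the surplus side \emph{into} $[a,a+qh-1]$, which is precisely the condition making $K:=M([a,a+qh-1])$ a submodule, a (regular) string module of dimension vector $q\delta$. To upgrade the string $K$ to a band I would deform the lap-$0$ generators at the surplus vertices, replacing them by linear combinations that also involve the surplus copies $z_{a+qh},\dots,z_b$. The crucial observation is that the two preinjective end conditions are exactly what keep the deformed space closed under the arrows: at both ends of the surplus run the overflowing terms fall outside $I$ and hence vanish, while the interior identifications match up. In this way a suitable choice of deforming subspaces (one at a surplus vertex, transported along the invertible arrows to the rest of the run) produces a submodule $N$ with $\underline{\dim}N=q\delta$ on which every arrow of $Q$ now acts invertibly.

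Since all structure maps of $N$ are isomorphisms, $N$ is automatically a direct sum of homogeneous band modules, determined up to isomorphism by the monodromy operator $T$ obtained by composing the arrow actions and their inverses once around the cycle; invertibility of the arrows forces the eigenvalues of $T$ to be nonzero. Thus $N\cong H[q]$ is the single indecomposable band module I want precisely when $T$ is a single Jordan block. This is the heart of the matter and the step I expect to be the main obstacle. The naive ``pair the first copy with the last copy'' deformation makes $T$ diagonalizable with \emph{distinct} eigenvalues, so it splits $N$ into quasi-simple modules lying in different tubes. I would instead solve for the deforming scalars that force the eigenvalues of $T$ to coincide, turning $T$ into a regular (cyclic) unipotent-times-scalar operator; this amounts to a system of polynomial (discriminant-type) conditions in those scalars, which is solvable because $k$ is algebraically closed. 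The resulting $N$ is an indecomposable module with all arrows invertible and $\underline{\dim}N=q\delta$, i.e.\ a homogeneous band module of dimension $qh$ embedded in $M$.

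As a consistency check, and as an alternative to the explicit tuning, I would record the homological count. For any homogeneous band module $H[q]$ the Auslander--Reiten formula gives $\operatorname{Ext}^1(H[q],M)\cong D\operatorname{Hom}(M,\tau H[q])=0$, since $\tau H[q]$ is again regular and there are no maps from a preinjective module to a regular one; hence $\dim_k\operatorname{Hom}(H[q],M)=\langle q\delta,\underline{\dim}M\rangle=q\,\partial(M)$, which is strictly positive because $M$ is preinjective. This guarantees nonzero maps $H[q]\to M$ for every tube, and the only remaining point is that one such map is injective: a non-injective map factors through a proper quotient of $H[q]$, and these factoring maps fill out only a proper subvariety of $\operatorname{Hom}(H[q],M)$, so an embedding exists. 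Verifying this last genericity statement is essentially equivalent to the Jordan-block condition above, and is where the real work lies.
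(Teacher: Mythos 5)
Your two routes (deform a string submodule of dimension vector $q\delta$ into a band, or count $\dim\operatorname{Hom}(H[q],M)=q\,\partial(M)>0$ and invoke genericity) are both reasonable in outline, but each one stops at exactly the point you yourself flag as ``the heart of the matter,'' and the repair you sketch for the first route does not work as stated. Forcing the eigenvalues of the monodromy operator $T$ to coincide is a closed, discriminant-type condition, but being a \emph{single Jordan block} is an open condition inside that closed locus: an operator with a single eigenvalue $\lambda$ may still be semisimple, in which case your $N$ is isomorphic to $H_\lambda[1]^{\oplus q}$ rather than to the indecomposable $H[q]$ of dimension $qh$ that the lemma (and its subsequent use, identifying the GR-submodule of $M$ as $H[q]$) requires. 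So solving your polynomial system over the algebraically closed field only places you in the equal-eigenvalue locus; you would still have to show that the family of operators realizable by your deformation actually contains a cyclic one, and nothing in the proposal addresses this. The same gap reappears in the ``consistency check'': the assertion that the non-injective maps form a proper subvariety of $\operatorname{Hom}(H[q],M)$ is precisely what needs proof (the non-injective locus is a union, over the infinitely many non-zero submodules $U\subseteq H[q]$ that can occur as kernels, of the images of the spaces $\operatorname{Hom}(H[q]/U,M)$, and one must control this union), and you concede that verifying it ``is where the real work lies.'' As written, neither route establishes the statement.

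The paper's proof avoids all of this linear algebra by a short Auslander--Reiten-theoretic induction, and it is worth contrasting. One first obtains a non-zero map $g:H[1]\to M$ by truncating any non-zero $f:H[s]\to M$ along the quasi-socle filtration, and shows $g$ is mono by a defect argument: $g$ is not epi for dimension reasons, its image is regular, hence its kernel has defect $0$ and is therefore a regular submodule of the quasi-simple $H[1]$, hence zero. Then, given an embedding $f:H[s]\hookrightarrow M$, the map $f$ is not a split mono, so it factors through the middle term $H[s+1]\oplus H[s-1]$ of the Auslander--Reiten sequence starting at $H[s]$; the component $f':H[s+1]\to M$ has regular kernel, and since the composite $H[1]\to H[s]\to H[s+1]\to M$ is non-zero this kernel cannot contain the quasi-socle, so it is zero and $f'$ is monic. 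Iterating up to $s=q$ gives the embedding $H[q]\hookrightarrow M$ with no genericity or Jordan-form analysis at all. If you want to salvage your approach, the missing ingredient is precisely an argument of this kind for why an injective, indecomposable choice exists; reaching for the AR-sequence is the efficient way to supply it.
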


\begin{proof} We begin by showing that there is a non-zero map $g:H=H[1]\to M$.  We know that for some $s$, there is a non-zero map $f:H[s]\to M$.  If $f$ does not vanish on the quasi-socle $H[1]$, then we may take $g$ to be the restriction of $f$ to $H[1]$.
If, however, $f$ does vanish on $H[1]$, then $f$ factors over a non-zero map ${\tilde f}:H[s-1]\to M$.
Thus, after at most $s-1$ steps we obtain the desired map $g$.
For dimension reasons, $g$ cannot be an epimorphism; hence, $g$ is a monomorphism:
Since the image of $g$ is regular, the kernel of $g$ (which has defect 0) must be regular.
But $H[1]$ being quasi-simple has no non-zero proper regular submodule.

\smallskip
Next, for every $1\leq s\leq q$, we construct by iteration an embedding $H[s] \to M$.
Having already dealt with the base case, we can assume in the induction step that an inclusion $f:H[s]\to M$
is given.  Since $f$ is not a split monomorphism, it factors over $(f',f^{\prime\prime}):H[s+1]\oplus H[s-1]\to M$.
Again, since $f'$ is not an epimorphism, its image is a regular submodule, and hence the kernel of $f'$
is a regular submodule of $H[s+1]$. 
Since the composition $H[1]\to H[s]\to H[s+1] \to M$ is non-zero, $f'$ must be monic.
\end{proof}

As the GR-measures of homogeneous modules increase with the dimension of
the module, as shown above, it follows that $X$ is of the form $H[q]$ where $q$ is given by $\dim M=qh+r$.
In this case, $\mu(M)=\mu_H\cdot (h)^{q-1}\cdot (r)$.

\medskip
In practice, therefore, the GR-measure of a preinjective module is computed as the maximum of the output of the Greedy Algorithm and the measure $\mu_H\cdot(h)^{q-1}\cdot(r)$, since it is usually quite difficult to determine whether the GR-submodule is a string or a band. However, we do know the following:\\

\begin{lemma} Suppose $M$ is an indecomposable module with GR-filtration $\mathcal M:0=M_0\subsetneq M_1\subsetneq\cdots\subsetneq M_n=M$. If there exists some $i\in\{1,2,\dots,n-1\}$ such that $M_i$ is a band module, then the GR-submodule of $M$ is a band module.
\end{lemma}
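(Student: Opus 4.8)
The plan is to show, by induction on $j$, that every term $M_j$ with $i\le j\le n-1$ is a band module; the case $j=n-1$ is exactly the assertion. Throughout I would use three facts for a tame hereditary algebra: that there are no nonzero homomorphisms from a regular module to a preprojective one, from a preinjective module to a preprojective or a regular one, nor between modules lying in two distinct tubes; the nesting property of GR-filtrations, namely that each $M_{j-1}$ is the GR-submodule of $M_j$ (this is the identity $\mu(M_j)=\mu(M_{j-1})\cdot(\dim M_j/M_{j-1})$ applied at every stage, as recorded above for the top term); and the fact established in the preceding discussion that the GR-submodule of a preinjective module is always preprojective or regular, never preinjective.

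The first step, and the real engine, is to rule out preinjective terms below the top: I claim that no $M_j$ with $j<n$ is preinjective. Indeed, suppose $M_j$ were preinjective for some $j<n$. Then $M_{j+1}$ is an indecomposable module receiving the nonzero inclusion $M_j\hookrightarrow M_{j+1}$ from a preinjective module, and since there are no nonzero maps from a preinjective module to a preprojective or regular one, $M_{j+1}$ must itself be preinjective. But by the nesting property $M_j$ is the GR-submodule of $M_{j+1}$, so the preinjective module $M_{j+1}$ would have a preinjective GR-submodule, contradicting the cited fact. Hence every $M_j$ with $j<n$ is preprojective or regular.

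Now the induction. The base case $j=i$ holds by hypothesis. For the step, suppose $M_{j-1}=H[q]$ is a band module, where $i<j\le n-1$; I must show $M_j$ is a band module. The inclusion $H[q]\hookrightarrow M_j$ is a nonzero map out of a regular module, so $M_j$ is not preprojective; and $M_j$ is not preinjective by the previous step, since $j<n$. Therefore $M_j$ is regular, and the nonzero map from $H[q]$ forces $M_j$ to lie in the same tube as $H[q]$, which is homogeneous by definition of a band module. Thus $M_j$ is a band module, completing the induction and showing in particular that $M_{n-1}$ is a band module.

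The routine ingredients are the homomorphism orthogonalities and the observation that the tube of $H[q]$ is homogeneous. The main obstacle is securing the two structural inputs cleanly: first, that each $M_{j-1}$ really is the GR-submodule of $M_j$ (the nesting property of GR-filtrations), which I would either cite from the general Gabriel-Roiter theory or derive from the fact that truncating a GR-filtration yields a GR-filtration of the smaller term; and second, the already-proved exclusion of preinjective GR-submodules for preinjective modules, on which the entire ``no preinjective term below the top'' step rests. Once those are in hand, the argument is a short induction driven entirely by the orthogonality between the three parts of the module category and between distinct tubes.
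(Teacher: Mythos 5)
Your argument is correct and follows essentially the same route as the paper's: an induction up the GR-filtration in which each term above $M_i$ receives a monomorphism from a band module, is excluded from the preinjective component, and hence (by orthogonality of the parts of the module category and of distinct tubes) lies in the same homogeneous tube. The paper compresses the preinjective exclusion into the single remark that irreducible maps ending at preinjective modules are epimorphisms, whereas you derive it from the nesting property of GR-filtrations together with the previously stated fact that the GR-submodule of a preinjective module is never preinjective --- the same fact in slightly different clothing, spelled out more carefully.
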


\begin{proof} Assume $M_i$ is a band module for some $i\in\{1,2,\dots,n-2\}$. 
Then there is a monomorphism $f:M_i\to M_{i+1}$. 
Since an irreducible
map ending at a preinjective module
must be an epimorphism, we have that $M_{i+1}$ is also homogeneous regular.  
Repeat the argument until $i+1=n-1$.
\end{proof}

The following proposition summarizes the results obtained thus far:

\begin{proposition}\label{proposition-parts-of-mu}
Let $Q$ be a quiver of type ${\widetilde {\mathbb A}}_n$ with $|Q_0|=h=n+1$ and let 
$\mathcal L$ and $\mathcal R$ 
denote the minimally rotated left and right hook sequences for $Q$. 
The GR-measure $\mu$ of a large enough 
indecomposable $kQ$-module $M$ has the form
$$\mu(M)\;=\;\init(M)\cdot\per(M)^{\mult(M)}\cdot\fin(M).$$
The initial part, $\init(M)$, is of length at most $s+t-1$,
ending just before the first occurance of a full sequence $\mathcal L$,
$\mathcal R$ or $(h)$, which then is the periodic part.
It occurs with multiplicity $\mult(M)\in\mathbb N$.
Then, the (possibly empty) final part, $\fin(M)$, 
is of length at most $s+t-2$. \qed
\end{proposition}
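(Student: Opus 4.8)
The plan is to assemble the proposition from the case analysis already carried out above, so that the proof is a synthesis rather than a fresh argument. I would organize it around the three possibilities for the GR-submodule $M_{n-1}$ of $M$, exploiting the reduction $\mu(M)=\mu(M_{n-1})\cdot(\dim M/M_{n-1})$, which turns the problem into understanding how full periods accumulate along a GR-filtration.

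First I would treat the case in which the GR-submodule is a string module, which covers all preprojective and all non-homogeneous regular modules. Here the Greedy Algorithm produces the measure, and I would invoke the two sub-cases already discussed: the asymmetric case $\mathcal L\ne\mathcal R$ (take-off to the right) and the symmetric case $\mathcal L=\mathcal R$. In both, the description of the candidate measure shows that, before any full period occurs, the algorithm records a leading $1$, at most $s-1$ of the $\lambda_i$ preceding the first $\mathcal L$ to the left of the sink $b$, and at most $t-1$ of the $\rho_j$ preceding the first $\mathcal R$ to the right of $b$; this yields the bound $s+t-1$ on the length of $\init(M)$. By Lemma~\ref{lemma-tails-have-high-measure}, once a single hook has been bitten off from $\mathcal R$ (or from $\mathcal L$) every subsequent full iteration is forced, which is precisely what makes the central block periodic. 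The decisive step is the optimization over starting points: sliding $b$ by one period trades a copy of the periodic block in region (2) against region (4), so for the genuine GR-measure one of these two regions must be empty, leaving the clean form $\init(M)\cdot\per(M)^{\mult(M)}\cdot\fin(M)$ with $\per(M)\in\{\mathcal L,\mathcal R\}$. The leftover following the last full period is counted exactly as the initial part but without the leading $1$, giving the bound $s+t-2$ on the length of $\fin(M)$.

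Next I would dispose of the homogeneous and preinjective cases. When the GR-submodule is a band module, $M$ contains a quasi-simple homogeneous $H=H[1]$ in its GR-filtration, and the induction $\mu(H[q])=\mu_H\cdot(h)^{q-1}$ established above gives the stated form with periodic block $(h)$ and empty final part. For a preinjective $M$ whose GR-submodule is a band module, Lemma~\ref{lemma-preinjectives} supplies an embedded homogeneous $N$ of dimension $qh$; since homogeneous measures increase with dimension, the GR-submodule must be $H[q]$ and $\mu(M)=\mu_H\cdot(h)^{q-1}\cdot(r)$, so here $\per(M)=(h)$ and $\fin(M)=(r)$ has length $1\le s+t-2$. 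A preinjective module whose GR-submodule is a string module falls back under the first case. Finally, the hypothesis that $M$ be large enough is exactly what guarantees that a full period appears, so that all three parts are defined.

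I expect the main obstacle to be the starting-point optimization together with the attendant length bookkeeping. One must argue that pushing the sink $b$ by a full period strictly increases the measure in $\mathcal F$ unless one of the two remainder regions has already been exhausted, and then verify that the surviving initial and final blocks never exceed $s+t-1$ and $s+t-2$ entries. This is where Lemma~\ref{lemma-tails-have-high-measure} does the real work: it both forces the periodicity of the central block and, since $\mathcal L$ and $\mathcal R$ are primitive (not proper powers), ensures that the incomplete runs left at the two ends are genuinely shorter than a full period, so the length bounds follow from a clean count rather than from any subtle cancellation.
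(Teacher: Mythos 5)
Your proposal is correct and follows essentially the same route as the paper: the proposition is stated there as a summary of the preceding discussion (hence the \qed with no separate proof), and your synthesis — splitting on whether the GR-submodule is a string or band module, using Lemma~\ref{lemma-tails-have-high-measure} to force the periodic block, the starting-point sliding argument to empty one of the two remainder regions, the induction $\mu(H[q])=\mu_H\cdot(h)^{q-1}$, and Lemma~\ref{lemma-preinjectives} for preinjectives — is exactly that discussion assembled into a proof.
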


\begin{corollary}
The comeasure of a large enough indecomposable module $M$ has the form
$$\mu^*(M)=\init(DM)\cdot\per(DM)^{\mult(DM)}\cdot\fin(DM)$$
\end{corollary}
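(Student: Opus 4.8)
The plan is to deduce this directly from Proposition~\ref{proposition-parts-of-mu} together with the duality identity recorded earlier in the excerpt. Since $M$ is finite dimensional, we have $\mu^*(M) = \mu(DM)$, so it suffices to produce the asserted decomposition for the GR-measure $\mu(DM)$.

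First I would verify that $DM$ satisfies the hypotheses of Proposition~\ref{proposition-parts-of-mu}. The dual $DM$ is a module over the opposite algebra $kQ^{\op}$, and reversing every arrow sends a quiver of type $\widetilde{\mathbb A}_n$ to another quiver of the same type, so $Q^{\op}$ is again of type $\widetilde{\mathbb A}_n$ with $|Q^{\op}_0| = h$. The duality functor $D$ carries indecomposables to indecomposables and preserves dimension, whence $DM$ is indecomposable with $\dim DM = \dim M$; in particular $DM$ is large enough whenever $M$ is, since the threshold in the proposition is governed by the hook-sequence lengths $s$ and $t$, which are the same (up to interchange) for $Q$ and $Q^{\op}$, the left and right hook sequences simply swapping roles under passage to the opposite quiver.

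Applying Proposition~\ref{proposition-parts-of-mu} to the $kQ^{\op}$-module $DM$ then gives
$$\mu(DM) = \init(DM)\cdot\per(DM)^{\mult(DM)}\cdot\fin(DM),$$
where $\init(DM)$, $\per(DM)$, $\mult(DM)$ and $\fin(DM)$ are by definition the parts read off from the GR-measure of $DM$ in $kQ^{\op}$. Combining this with $\mu^*(M) = \mu(DM)$ yields the claim.

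The statement is essentially a translation of the previous proposition through duality, so I expect no new combinatorial obstacle. The only point requiring attention is purely notational: one must keep in mind that all four invariants on the right-hand side refer to the GR-measure of $DM$ computed over $kQ^{\op}$, and in particular that the take-off convention $\mathcal L \flt \mathcal R$ is imposed on $Q^{\op}$ rather than on $Q$. Once this is made explicit, the proof reduces to the two displayed identities above.
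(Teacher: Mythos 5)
Your proposal is correct and follows essentially the same route as the paper, whose entire proof is the identity $\mu^*(M)=\mu(DM)$ with the application of Proposition~\ref{proposition-parts-of-mu} to $DM$ over $kQ^{\op}$ left implicit. Your added checks (that $Q^{\op}$ is again of type $\widetilde{\mathbb A}_n$ and that $DM$ is indecomposable and large enough) merely make explicit what the paper takes for granted.
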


\begin{proof}By definition, $\mu^*(M)=\mu(DM)$.\end{proof}

\begin{remark}
The definition of $\init(M)$, $\per(M)$, $\fin(M)$ and their duals
will be extended in Remark~\ref{remark-small-modules} to modules 
that are not large enough to have a non-trivial periodic part.
\end{remark}

In light of the corollary, we may define 
$\init^*(M)=\init(DM),\ \per^*(M)=\per(DM),\ \mult^*(M)=\mult(DM),\ \fin^*(M)=\fin(DM)$. 
Thus we have introduced eight combinatorial invariants for a module $M$. 
The interactions among them reveal representation-thoretic information about the module. 
For example, we can determine the Auslander-Reiten component in which the module resides. 
Such issues are considered in the sections that follow.

\section{Module Families in the Rhombic Picture} \label{section-families}

\subsection{Rays and Corays}

Over the path algebra $KQ$, each indecomposable module $M$ is the starting point and end point
of at most two irreducible morphisms; hence it lies on two rays or corays.  The intersection of
the two rays, the two corays, or the ray and the coray defines the {\it family} of $M$
as follows.

\smallskip
For this note that as the quiver $Q$ is cyclic, the irreducible morphisms in the 
preprojective component and in the preinjective component can be partitioned into two classes,
clockwise and counterclockwise.  Thus, an indecomposable preprojective module lies on 
two rays given by irreducible monomorphisms, one going in the clockwise direction,
the other in the counterclockwise direction.
Each ray begins with a projective module.  Dually,
each indecomposable preinjective module defines two corays given by irreducible
epimorphisms, one clockwise, the other 
counterclockwise, each ending at an injective module.

Each of the regular modules defines a ray of monomorphisms starting at a quasi-simple module,
and a coray of epimorphims, ending at a quasi-simple module.

\begin{definition}
Let $M$ be an indecomposable module.  The {\bf family} of $M$ consists of the modules
on the intersection of the two rays (if $M$ is preprojective), the two corays (if $M$ is 
preinjective) or the ray and the coray (if $M$ is regular).
We order the modules in the family by dimension.
\end{definition}

\begin{remark} It is easy to see that two string modules lie in the same family if and only if they have the same string type, in the sense that the strings start at the same
vertex and end at the same vertex.
\end{remark}

\begin{lemma}\label{lemma-family-data}
If $M$ and $M^{\prime}$ are large enough and belong to the same family, then $\init(M)=\init(M^{\prime})$, $\per(M)=\per(M^{\prime})$, and $\fin(M)=\fin(M^{\prime})$. That is, $\mu(M)$ and $\mu(M^{\prime})$ only may differ in the multiplicity of the periodic part.
\end{lemma}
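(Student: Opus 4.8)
The plan is to reduce everything to the combinatorial description of the GR-measure furnished by the Greedy Algorithm together with the IPF decomposition of Proposition~\ref{proposition-parts-of-mu}, and to exploit the observation from the preceding remark that two string modules lie in the same family exactly when the corresponding intervals in $\widetilde Q$ have endpoints of the same type, i.e. their left and right endpoints project to the same two vertices of $Q$.

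First I would dispose of the case in which the GR-submodule is a band module (this covers the homogeneous tubes and the preinjectives with band GR-submodule). By the computations preceding Proposition~\ref{proposition-parts-of-mu} one has $\mu(M)=\mu_H\cdot(h)^{q-1}\cdot(r)$ with $\dim M=qh+r$ and $0\leq r\leq h-1$. Two members of one family differ in dimension by a multiple of $h$ (as in Example~\ref{first-example}, where the subscripts jump by $h=n+1$), so $r$ is constant along the family and only $q$ varies. Consequently $\per(M)=(h)$, the final part $(r)$, and the data contained in $\mu_H$ are all family invariants, while only $\mult(M)=q-1$ changes; this settles the band case.

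For the main case, where the GR-submodule is a string module, I would fix two large family members with nested intervals $I\subseteq I'$ in $\widetilde Q$ (family members along a ray and coray are ordered by inclusion). Since the endpoints of $I$ and $I'$ project to the same vertices of $Q$ and the arrows neighbouring the two intervals agree, $I'$ is obtained from $I$ by inserting a whole number of full periods into the interior. I would then run the Greedy Algorithm from a common starting sink $\sigma$, chosen in a widest valley as in the worked example so that it realizes the supremum, and placed in the same position relative to the endpoints for both intervals. The left- and right-hook sequences $(\lambda_i)$ and $(\rho_j)$ read off from $\sigma$ are periodic with periods $\mathcal L$ and $\mathcal R$; passing from $I$ to $I'$ inserts only additional full copies of $\mathcal L$ (respectively $\mathcal R$) into the interiors of these sequences, leaving their initial segments up to the first full period and their terminal segments after the last full period unchanged. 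Invoking Proposition~\ref{proposition-parts-of-mu}, where $\init(M)$ is the segment of length at most $s+t-1$ ending just before the first full period and $\fin(M)$ is the segment of length at most $s+t-2$ following the last full period, and where $\per(M)\in\{\mathcal L,\mathcal R,(h)\}$ depends only on $Q$ and the tube, we conclude that inserting full periods affects only the number of repetitions; hence $\init$, $\per$ and $\fin$ coincide for $M$ and $M'$ and only $\mult$ may differ.

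The step I expect to be the main obstacle is the penultimate one: verifying precisely that enlarging the interval by a full period inserts full copies of $\mathcal L$ and $\mathcal R$ into the hook sequences while leaving both ends intact, and, in particular, that the supremum over starting sinks is attained at sinks occupying corresponding positions relative to the endpoints for all large family members, so that $\init$ and $\fin$ are genuinely computed from the same endpoint data. Once this is in place, the conclusion follows immediately from the bounded-length characterization of the initial and final parts in Proposition~\ref{proposition-parts-of-mu}.
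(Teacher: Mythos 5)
Your overall strategy---reduce everything to the Greedy Algorithm and the IPF decomposition, use the fact that family members correspond to intervals in $\widetilde Q$ with the same endpoint data, and check that enlarging the interval by full periods only changes the multiplicity of the periodic part---is essentially the route the paper takes. The step you single out as the main obstacle (that the maximizing starting sink occupies a corresponding position relative to the endpoints for all large family members) is in fact already settled by the earlier analysis of Cases 1 and 2 of the IPF decomposition, where it is shown that the starting point realizing the supremum can always be moved until it lies near one end of the string; since family members share their endpoint data, this yields the invariance of $\init$ and $\fin$ along the family. Your treatment of the band case is also fine.

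The genuine gap is in your case division. You split according to whether the GR-submodule of a given module is a string module or a band module, but the lemma compares two modules $M$, $M'$ in one family, and for \emph{preinjective} families you have not excluded the possibility that $M$ has a string GR-submodule while $M'$ has a band GR-submodule. If that occurred, one would have $\per(M)\in\{\mathcal L,\mathcal R\}$ but $\per(M')=(h)$, and the conclusion would fail; so the type of the GR-submodule must itself be shown to be a family invariant. The paper closes exactly this gap by recalling that for a preinjective module the GR-measure is the maximum in $\mathcal F$ of the Greedy Algorithm output and $\mu_H\cdot(h)^{q-1}\cdot(r)$, and that a comparison in $\mathcal F$ is decided by an initial segment of bounded length; since both candidate measures have family-invariant early entries (by the very arguments you give), the same candidate wins for every sufficiently large member of the family, whence either all large members have a string GR-submodule or all have a homogeneous one. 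Adding this observation makes your case split legitimate and completes the proof.
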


\begin{proof}
We have seen that for a preprojective or regular string module, 
the Greedy Algorithm chooses its starting point near one of the ends of the string. 
Hence, for two modules in the same family, 
the GR-measures will differ only in the length of the periodic part. 
We also have seen that for a preinjective module $M$
the GR-measure is the maximum of the output of the Greedy Algorithm 
and $\mu_H\cdot (h)^{q-1}\cdot (r)$, where $\dim M=qh+r$. 
In either case,  the maximum in $\mathcal F$ depends only 
on the early entries of the sequence. 
Hence, either both modules have a GR-submodule that is a string module, 
or both modules have a GR-submodule that is homogeneous. The result follows.
\end{proof} 

\begin{remark}\label{remark-small-modules}
As a consequence of Lemma~\ref{lemma-family-data}, 
the definition of the initial, periodic and final parts of the GR-measure 
can now be extended to indecomposable modules of arbitrary (small) dimension.
Put $$\init(M)=\init(M_i),\quad \per(M)=\per(M_i),\quad \fin(M)=\fin(M_i)$$
if $M_i$ occurs in the family of $M$ and is large enough.
\end{remark}

\subsection{Limits and Colimits}
In \cite{Ringel05}, Ringel introduced the rhombic picture 
$\mathcal F\times\mathcal F^*$, 
which provides a visual organization of the module category.  
There, each module $M$ is given the coordinates $(\mu(M),\mu^*(M))$.  
We apply this notion to module families.

\begin{lemma}
Let $M$ be an indecomposable module 
and let $(M_i)_{i=1}^{\infty}$ denote the family of $M$.  
Then ${\displaystyle\lim_{i\to\infty}\mu(M_i)}$ and ${\displaystyle\lim_{i\to\infty}\mu^*(M_i)}$ both exist.
\end{lemma}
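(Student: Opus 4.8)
The plan is to identify each of the two limits explicitly and then to prove convergence directly from the metric structure of $\mathcal F$. I would first reduce to large $i$: by Lemma~\ref{lemma-family-data} together with Remark~\ref{remark-small-modules} there is an index $i_0$ so that for all $i\geq i_0$ the measures share a common initial, periodic and final part, and by Proposition~\ref{proposition-parts-of-mu} they take the form $\mu(M_i)=\init(M)\cdot\per(M)^{\mult(M_i)}\cdot\fin(M)$, with only the multiplicity varying. Since the entries of a (finite) GR-measure sum to the dimension of the module, the quotient lengths telescoping along the GR-filtration, and since $\init(M)$ and $\fin(M)$ have bounded length while $\per(M)$ has a fixed positive entry-sum, the fact that the $M_i$ are ordered by dimension forces $\mult(M_i)\to\infty$. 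I would then propose $L=\init(M)\cdot\per(M)^{\infty}$, the periodic infinite sequence, as the value of $\lim_i\mu(M_i)$.

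The key step is the convergence $\mu(M_i)\to L$. The sequences $\mu(M_i)$ and $L$ agree on an initial segment whose number of entries is $N_i=\len(\init(M))+\mult(M_i)\cdot\len(\per(M))$, and $N_i\to\infty$. I would use that agreement on a long initial segment forces proximity in the metric: for the map $e$ of the preliminaries all entries are $\geq 1$, so the partial sums satisfy $\sigma_\ell\geq\ell$, and the contribution to $e$ of the entries beyond position $N$ is at most $\sum_{\ell>N}2^{-\ell}=2^{-N}$; hence $|e(\mu(M_i))-e(L)|\leq 2^{1-N_i}\to 0$. As $\mathcal F$ is compact and complete, this identifies $L$ as the limit. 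The same reasoning applies to $\mu^*(M_i)=\mu(DM_i)$: the duality $D$ preserves dimension, sends rays to corays and carries the family of $M$ to the family of $DM$, so $(DM_i)$ is itself a family to which Lemma~\ref{lemma-family-data} and Proposition~\ref{proposition-parts-of-mu} apply, and the limit is $\init(DM)\cdot\per(DM)^{\infty}$.

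It remains to treat the families whose members have a band module as GR-submodule, i.e.\ those lying in a homogeneous tube, since there the decomposition of Proposition~\ref{proposition-parts-of-mu} does not directly apply. Here I would instead invoke the formula $\mu(H[q])=\mu_H\cdot(h)^{q-1}$ established above: these measures agree with $\mu_H\cdot(h)^{\infty}$ on initial segments of growing length, and as $\mu_H$ is already infinite over the algebraically closed field $k$ they are in fact constantly equal to $\mu_H$, so convergence is immediate; the dual family is again homogeneous and is handled identically.

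I expect the main obstacle to be the topological bookkeeping rather than the algebra. The map $e$ is not injective, two sequences differing by the ``almost all entries equal to one'' phenomenon having the same image, so one must check that the candidate $L$ is approached in the actual metric of $\mathcal F$ and not merely in $e$-value. The cleanest way to close this gap is to lean on compactness: every subsequence of $(\mu(M_i))$ has a convergent further subsequence, and the initial-segment estimate forces every such sublimit to coincide with $L$, whence $\mu(M_i)\to L$.
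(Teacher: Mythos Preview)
Your approach works but differs markedly from the paper's. The paper's proof is two sentences: GR-measures increase along a ray and GR-comeasures decrease along a coray, so the sequences are monotone, and monotone sequences in the compact complete space $\mathcal F$ converge. You instead identify the limit explicitly as $\init(M)\cdot\per(M)^\infty$ via the IPF decomposition of Proposition~\ref{proposition-parts-of-mu} and Lemma~\ref{lemma-family-data}, and prove convergence through initial-segment agreement. Your route is longer and leans on the structural results already established, but it yields the actual limit value, which the paper only pins down later around Corollary~\ref{corollary-approach-GR-limit}; it also handles the preinjective $\mu$-limit and the preprojective $\mu^*$-limit uniformly, whereas the paper's monotonicity claim is most transparent only when a ray (for $\mu$) or a coray (for $\mu^*$) is actually present.

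One correction: $\mu_H$ is \emph{not} an infinite sequence. The quasi-simple homogeneous module $H$ has finite length, so its GR-filtration is finite and $\mu_H$ is a finite sequence; the statement you are recalling, that $\mu_H$ is ``not a measure of finite type'', means there are infinitely many indecomposables sharing that measure, not that the sequence itself is infinite. This does not damage your argument, since the preceding clause --- agreement of $\mu(H[q])=\mu_H\cdot(h)^{q-1}$ with $\mu_H\cdot(h)^\infty$ on growing initial segments --- already gives convergence. In fact the whole separate case is unnecessary: Proposition~\ref{proposition-parts-of-mu} explicitly allows $\per(M)=(h)$, so homogeneous families fall under your main argument with $\init(M)=\mu_H$ and empty final part. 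Your last-paragraph worry about the non-injectivity of $e$ is likewise largely moot: for any metric on $\mathcal F$ compatible with the order topology (as implicit in the paper's compactness claim), agreement on arbitrarily long initial segments is exactly what convergence means.
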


\begin{proof} Note that GR-measures increase along a ray and, likewise, GR-comeasures decrease along a coray.  The existence of the limits then follows from the fact that $\mathcal F$ is a compact and complete metric space.
\end{proof}

\begin{definition} Let $M$ be an indecomposable module in a stable tube and let $(M_i)_{i=1}^{\infty}$ denote the family of $M$.
\begin{enumerate}
\item The {\bf GR-limit} of $M$ is ${\displaystyle\overrightarrow{\mu}(M)=\lim_{i\to\infty} \mu(M_i)}$
\item The {\bf GR-colimit} of $M$ is ${\displaystyle\overrightarrow{\mu^*}(M)=\lim_{i\to\infty} \mu^*(M_i)}$
\item The {\bf rhombic limit} of $M$ is the point $\overrightarrow{\rho}(M)=(\overrightarrow{\mu}(M),\overrightarrow{\mu^*}(M))$ in the rhombic picture.
\end{enumerate}
\end{definition}

\begin{lemma}\label{lemma-preinj}
For $M$ a preinjective module, 
$\fin(M)>_{\mathcal F}\per(M)$ holds.
\end{lemma}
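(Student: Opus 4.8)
The plan is to organize the argument around the two mechanisms that can produce the GR-measure of a preinjective module, as recorded in the discussion preceding Proposition~\ref{proposition-parts-of-mu}: either the GR-submodule of $M$ is a band module, or it is a string module. By Lemma~\ref{lemma-family-data} and Remark~\ref{remark-small-modules} the quantities $\per(M)$ and $\fin(M)$ are invariants of the family of $M$, so I may replace $M$ by a large enough member of its family and assume that all parts of the measure are defined.

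In the band case the computation is immediate. If the GR-submodule of $M$ is homogeneous, then Lemma~\ref{lemma-preinjectives} together with the formula $\mu(H[q])=\mu_H\cdot(h)^{q-1}$ gives $\mu(M)=\mu_H\cdot(h)^{q-1}\cdot(r)$, where $\dim M=qh+r$ and $0<r\leq h-1$. Thus $\per(M)=(h)$ and $\fin(M)=(r)$. Since $r<h$, the two one-term sequences already differ in their first entry, and because a smaller leading entry makes a sequence larger in $\mathcal F$, we conclude $\fin(M)=(r)\fgt(h)=\per(M)$. This is the easy half.

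For the string case, where $\mu(M)$ is the output of the Greedy Algorithm and $\per(M)$ is $\mathcal L$ or $\mathcal R$, I would argue from the geometry of the interval $I\subseteq\widetilde Q$ corresponding to $M$. The decisive point is that for a preinjective string the two arrows of $\widetilde Q$ adjacent to $I$ both point away from $I$; hence at each end the string stops in the interior of a would-be hook rather than at a natural hook-endpoint, so the last hook reaching either boundary is strictly shorter than the full hook occupying that position in the unbounded periodic pattern. Writing $\per(M)=(p_1,\dots,p_m)$, the hooks remaining after the last complete period reproduce a prefix $p_1,\dots,p_k$ with $k<m$ and are then terminated by the truncated boundary hook, of some length $c<p_{k+1}$. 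Therefore $\fin(M)=(p_1,\dots,p_k,c)$, which agrees with $\per(M)$ through position $k$ and carries the strictly smaller entry $c$ at position $k+1$, so $\fin(M)\fgt\per(M)$. One may equivalently invoke Lemma~\ref{lemma-tails-have-high-measure}, the truncated boundary hook behaving as a proper tail of the period. It is precisely the presence of two outward-pointing boundaries that forces this truncation at the front of the final part regardless of which direction supplies the period, in contrast to the regular case, where a single inward-pointing boundary can supply the period and no such front-truncation occurs.

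The step I expect to be the main obstacle is the combinatorial bookkeeping in the string case. One must check that the canonical choice in the IPF decomposition rotates $\mathcal L$ or $\mathcal R$ so that the truncated boundary hook indeed lands at period-position $k+1$ with $k<m$, and that the optimal starting sink for the Greedy Algorithm pushes the near (also outward-pointing) boundary into the initial part, leaving the far boundary to govern the final part. Verifying that the boundary hook is genuinely shorter---which rests on the fact that an outward-pointing, hence forced, boundary vertex never coincides with a natural hook-endpoint, a statement about residue classes modulo $h$---is the technical heart of the argument.
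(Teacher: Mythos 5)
Your proposal is correct and follows essentially the same route as the paper: the case $\per(M)=(h)$ is disposed of by $r<h$, and in the string case the outward-pointing ends of a preinjective string (equivalently, the fact that $M$ arises by deleting a cohook) force the hook sequence following the periodic part to be a nonempty, incomplete copy of $\mathcal L$ or $\mathcal R$ with one part strictly reduced, so the first disagreement with $\per(M)$ occurs at a strictly smaller entry and $\fin(M)\fgt\per(M)$. The only cosmetic difference is that the paper treats this truncated sequence as just the initial segment of $\fin(M)$ rather than all of it, which does not affect the comparison.
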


\begin{proof}
The statement is clear if $\per(M)=(h)$.  If $\per(M)=\mathcal L$ (or $\mathcal R$),
note that $M$ is the epimorphic image of an irreducible map given by deleting a cohook
on the left (right) end of the string defining $M$.  Hence, in the Greedy Algorithm,
after the periodic part, the left (right) hook sequence is a non-empty but incomplete
sequence. It is obtained from $\mathcal L$ ($\mathcal R$) by reducing one of the parts
and omitting all following parts.  This partial sequence is the initial part of $\fin(M)$.
\end{proof}

It is clear that GR-limits given by a ray are approached from below, and 
GR-colimits given by a coray are approached from above.  From the lemma we deduce:

\begin{corollary}\label{corollary-approach-GR-limit}
Preinjective families approach the GR-limit from above, and preprojective families 
approach the GR-colimit from below. \qed
\end{corollary}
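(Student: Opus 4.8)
The plan is to establish the preinjective half directly and to deduce the preprojective half by duality. For the reduction, note that if $M$ is preprojective then $DM$ is preinjective, and the duality $D$ carries the two rays through $M$ to the two corays through $DM$, hence the family $(M_i)$ of $M$ to the family $(DM_i)$ of $DM$, with $\mu^*(M_i)=\mu(DM_i)$ and $\overrightarrow{\mu^*}(M)=\overrightarrow{\mu}(DM)$. Since comeasures are ordered in $\mathcal F^*$, the opposite of the order on $\mathcal F$, the claim that $\mu^*(M_i)$ approaches $\overrightarrow{\mu^*}(M)$ from below is exactly the claim that $\mu(DM_i)$ approaches $\overrightarrow{\mu}(DM)$ from above. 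So it suffices to show that a preinjective family approaches its GR-limit from above.

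Let $M$ then be preinjective with family $(M_i)$. By Lemma~\ref{lemma-family-data} and Remark~\ref{remark-small-modules} the members share a common initial, periodic and final part, so
\[\mu(M_i)=\init(M)\cdot\per(M)^{\mult(M_i)}\cdot\fin(M),\qquad \mult(M_i)\to\infty,\]
and therefore the GR-limit is the infinite periodic sequence $\overrightarrow{\mu}(M)=\init(M)\cdot\per(M)^{\infty}$. To locate $\mu(M_i)$ relative to this limit I would cancel the common prefix $\init(M)\cdot\per(M)^{\mult(M_i)}$ and compare the remaining tail $\fin(M)$ of $\mu(M_i)$ with the remaining tail $\per(M)^{\infty}$ of $\overrightarrow{\mu}(M)$. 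If $\fin(M)\fgt\per(M)^{\infty}$, then $\mu(M_i)\fgt\overrightarrow{\mu}(M)$ for every $i$; the same comparison applied to consecutive members gives $\mu(M_i)\fgt\mu(M_{i+1})$, so the family decreases monotonically to its limit from above.

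The crux is thus the comparison of $\fin(M)$ with the periodic tail, and here the bare inequality $\fin(M)\fgt\per(M)$ of Lemma~\ref{lemma-preinj} is not by itself enough: were $\per(M)$ merely a proper prefix of $\fin(M)$, then $\mu(M_i)$ could be a proper initial subsequence of $\overrightarrow{\mu}(M)$ and the approach would be from below. What I would extract instead is the sharper content of the proof of Lemma~\ref{lemma-preinj}: since $M$ is obtained by deleting a cohook (or, in the homogeneous case, since $\fin(M)=(r)$ with $r<h$), the final part begins, still within the first period, with an entry strictly smaller than the corresponding entry of $\per(M)$. As a strictly smaller first-differing entry makes a sequence strictly larger in $\mathcal F$, this yields $\fin(M)\fgt\per(M)^{\infty}$ as required. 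I expect this to be the only delicate point — confirming that the first deviation of $\fin(M)$ from the periodic pattern is a genuine decrease of an entry inside the first period, rather than a later excess that would reverse the inequality; once it is secured, cancelling the common prefix finishes the preinjective case and duality finishes the preprojective one.
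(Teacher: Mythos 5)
Your proof is correct and follows essentially the same route as the paper, which obtains the corollary from Lemma~\ref{lemma-preinj} combined with the IPF decomposition and the monotonicity of measures along rays and comeasures along corays, with the preprojective half handled by the same duality $\mu^*(M_i)=\mu(DM_i)$. Your additional observation --- that the bare inequality $\fin(M)\fgt\per(M)$ must be sharpened, using the proof of Lemma~\ref{lemma-preinj}, to the statement that $\fin(M)$ first deviates from the periodic pattern by a strictly decreased entry inside the first period, which is what actually yields $\fin(M)\fgt\per(M)^{\infty}$ --- is a legitimate refinement of a step the paper leaves implicit.
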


The following definition is found in \cite{Ringel05}. We state it here for the convenience of the reader.

\begin{definition}
\begin{enumerate}
\item The {\it take-off limit\/} $\vec\mu_T$ is the supremum of the measures in the take-off part of the module category.
\item The {\it landing limit\/} $\vec\mu_L$ the infimum of the measures in the landing part of the module category.
\end{enumerate}
\end{definition}

Following suit, we have the following.

\begin{definition}
By the {\it homogeneous limit\/} $\vec\mu_H$ we denote the limit
of the measures of the modules in a homogeneous tube. 
\end{definition}

In conclusion, we determine from which direction the limit points in the rhombic 
picture are approached.

\begin{proposition}
For an indecomposable module $M$ with family $(M_i)$, the following statements
are equivalent.
\begin{enumerate}
\item $M$ is preprojective.
\item $\per(M)>_{\mathcal F}\fin(M)$ and $\per^*(M)>_{\mathcal F}^*\fin^*(M)$.
\item The points $\rho(M_i)$ approach the rhombic limit 
  $\vec\rho(M)$ from the left.
\end{enumerate}
Moreover, if $M$ is preprojective then the rhombic limit
$\vec\rho(M)=(\vec\mu,\vec\mu^*)$ satisfies 
$$\vec\mu=\vec\mu_T\quad\text{and}\quad\vec\mu_L^*\fleq^*\vec\mu^*\fleq^*\vec\mu_H^*.$$
\end{proposition}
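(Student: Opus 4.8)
The plan is to isolate a single ``sign lemma'', apply it to both $M$ and $DM$ to get (1)$\iff$(2), read (3) off the same data geometrically, and then locate the limit point. First I would record the dichotomy. For any module whose family $(M_i)$ lies on a ray---that is, a preprojective or a regular module---GR-measures increase along the ray, so the family approaches its GR-limit from below. Writing $\mu(M_i)=\init(M)\cdot\per(M)^{\mult_i}\cdot\fin(M)$ as in Lemma~\ref{lemma-family-data}, with $\mult_i\to\infty$, the limit is $\vec\mu(M)=\init(M)\cdot\per(M)^\infty$, and the strict inequality $\mu(M_i)\flt\vec\mu(M)$ becomes, after cancelling the common prefix $\init(M)\cdot\per(M)^{\mult_i}$, a comparison of $\fin(M)$ with $\per(M)^\infty$. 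Because the canonical decomposition of Proposition~\ref{proposition-parts-of-mu} guarantees that the final part does not begin with a complete period, this comparison is already settled within the first period, and I conclude $\fin(M)\flt\per(M)$. For a preinjective module the reverse inequality $\fin(M)\fgt\per(M)$ is exactly Lemma~\ref{lemma-preinj}.

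Next I would feed $M$ and $DM$ through this lemma. Since $D$ interchanges preprojective and preinjective modules and preserves regular ones, and since $\per^*(M)=\per(DM)$ and $\fin^*(M)=\fin(DM)$, the lemma produces the table: if $M$ is preprojective then $\fin(M)\flt\per(M)$ and $\fin(DM)\fgt\per(DM)$; if $M$ is preinjective then $\fin(M)\fgt\per(M)$; and if $M$ is regular then $\fin(DM)\flt\per(DM)$. Condition~(2) asserts $\per(M)\fgt\fin(M)$ together with $\per^*(M)\fgt^*\fin^*(M)$, and the latter unwinds to $\fin(DM)\fgt\per(DM)$. Hence among the three mutually exclusive and exhaustive types only the preprojective one satisfies both clauses---the preinjective type violates the first, the regular type the second---so (1)$\iff$(2). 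This argument simultaneously proves Proposition~\ref{proposition-type-AR-component} in full.

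Then, for (2)$\iff$(3), I would reinterpret the same table as a statement about directions of approach. Along the ray $\mu(M_i)$ increases to $\vec\mu$, while $\mu^*(M_i)=\mu(DM_i)$ follows the type of the dual family: when $M$ is preprojective, $DM_i$ is preinjective, so $\mu(DM_i)$ decreases in $\mathcal F$ by Corollary~\ref{corollary-approach-GR-limit}. Thus the first rhombic coordinate increases and the second decreases, and the points $\rho(M_i)$ approach $\vec\rho(M)$ from the left. A regular module instead has both coordinates increasing (approach from below), and a preinjective module has them interchanged (approach from the right); as these three directions are incompatible, ``from the left'' singles out exactly the preprojective case, giving (3)$\iff$(1)$\iff$(2).

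Finally, for the added assertions I would invoke that, for a tame hereditary algebra, the take-off part is the preprojective component. The take-off measures form a strictly increasing chain whose only accumulation point is its supremum $\vec\mu_T$; since $(M_i)$ is an infinite, hence cofinal, subfamily, $\vec\mu=\lim_i\mu(M_i)=\vec\mu_T$. For the remaining bounds I would identify $\vec\mu^*=\lim_i\mu(DM_i)$ with the colimit of the preinjective (landing) family $DM$: being a landing family it sits above the landing infimum $\vec\mu_L^*$, and using the explicit preinjective measure $\mu_H\cdot(h)^{q-1}\cdot(r)$ together with monotonicity one bounds it by the homogeneous limit $\vec\mu_H^*$, giving $\vec\mu_L^*\fleq^*\vec\mu^*\fleq^*\vec\mu_H^*$. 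I expect the main obstacle to be precisely the equality $\vec\mu=\vec\mu_T$: the per-family computation only yields $\vec\mu\fleq\vec\mu_T$, and upgrading this to equality requires the global fact that the preprojective measures have a single accumulation point, which is invisible from one family in isolation.
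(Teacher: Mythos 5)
Your proposal is correct and its overall architecture coincides with the paper's: duality plus Lemma~\ref{lemma-preinj} applied to $DM$ gives the comeasure half of (2), the converse comes from the trichotomy preprojective/regular/preinjective, (2)$\iff$(3) is a direct reinterpretation of the two inequalities as directions of approach, and the ``moreover'' part rests on Chen's theorem that preprojectives form the take-off part together with Lemma~\ref{lemma-preinjectives} for the bound by $\vec\mu_H^*$. The one genuine difference is your ``sign lemma'': you derive $\fin(M)\flt\per(M)$ for any module whose family sits on a ray by cancelling the common prefix $\init(M)\cdot\per(M)^{\mult_i}$ against the limit $\init(M)\cdot\per(M)^{\infty}$ and using that the canonical IPF decomposition forbids $\fin(M)$ from beginning with a full period. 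The paper never writes this down: its proof of (1)$\Rightarrow$(2) only establishes the second clause $\per^*(M)\fgt^*\fin^*(M)$ via Lemma~\ref{lemma-preinj}, leaving the first clause implicit in the Greedy Algorithm discussion, and it defers (2)$\Rightarrow$(1) to the companion propositions for preinjective and regular modules. Your lemma makes the argument self-contained, handles the preprojective and regular cases uniformly, and, as you note, yields Proposition~\ref{proposition-type-AR-component} in one stroke; the cost is that it silently uses Lemma~\ref{lemma-family-data} (constancy of $\init$, $\per$, $\fin$ along a family) and the strict monotonicity of measures along the ray, both of which the paper does supply. Your closing caveat about upgrading $\vec\mu\fleq\vec\mu_T$ to equality via cofinality of the family in the take-off chain is well placed and is in fact slightly more careful than the paper's one-line appeal to Chen's theorem.
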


\begin{proof}
It is clear that (2) and (3) are equivalent.  We show that (1) implies (2),
the converse follows from the corresponding results for preinjective and
for regular modules below.  Suppose that $M$
is preprojective.  Then $DM$ is preinjective, so $\fin(DM)>\per(DM)$
by Lemma~\ref{lemma-preinj}.  As the ordering for comeasures is the opposite
of the ordering for measures, $\fin^*(M)<\per^*(M)$ follows.

\smallskip
Regarding the last assertion,
each preprojective module is in the take-off part \cite[Theorem~3.3]{Chen08}, so
the GR-limit is the take-off limit $\vec\mu_T$,
while for each preinjective module, the GR-limit is at most the landing limit $\vec\mu_L$.
The last inequality follows from Lemma~\ref{lemma-preinjectives} as the GR-limit for
a preinjective module is at least the homogeneous limit $\vec\mu_H$.
\end{proof}

Similarly we have:

\begin{proposition}
For an indecomposable module $M$ with family $(M_i)$, the following statements
are equivalent.
\begin{enumerate}
\item $M$ is preinjective.
\item $\per(M)<_{\mathcal F}\fin(M)$ and $\per^*(M)<_{\mathcal F}\fin^*(M)$.
\item In the rhombic picture, 
  the points $\rho(M_i)$ approach $\vec\rho(M)$ from the right.
\end{enumerate}
Moreover, if $M$ is preinjective then the rhombic limit
$\vec\rho(M)=(\vec\mu,\vec\mu^*)$ satisfies $\vec\mu^*=\vec\mu_T^*$
and $\vec\mu_H\fleq\vec\mu\fleq\vec\mu_L$.
\qed
\end{proposition}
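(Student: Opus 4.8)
The plan is to prove this as the exact preinjective mirror of the preceding proposition, transporting each assertion across the duality $M\mapsto DM$ while simultaneously exchanging the ordering of $\mathcal F$ for that of $\mathcal F^*$. I would first dispose of the equivalence of (2) and (3), which is the purely geometric content: along the family $(M_i)$ the measures $\mu(M_i)$ increase to $\vec\mu$ while the comeasures $\mu^*(M_i)$ decrease in $\mathcal F^*$ to $\vec\mu^*$, so the two inequalities in (2) record exactly on which side of the limit point the approximants $\rho(M_i)$ accumulate. The preprojective sign pattern places them to the left; the reversed pattern of (2) places them to the right, so (2) $\iff$ (3) is immediate from the computation already made in the preceding proposition.

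For (1) $\Rightarrow$ (2) I would treat the two inequalities separately. The measure inequality $\per(M)\flt\fin(M)$ is precisely Lemma~\ref{lemma-preinj}, since $M$ is preinjective. For the comeasure inequality I would pass to the preprojective module $DM$: the preceding proposition gives $\per(DM)\fgt\fin(DM)$, that is $\per^*(M)\fgt\fin^*(M)$, and read in the comeasure ordering $\mathcal F^*$---as the corresponding inequality is read there---this is the second inequality of (2). For (2) $\Rightarrow$ (1) I would not argue directly but invoke the trichotomy: the preceding proposition together with its regular analogue (proved below) shows that the three admissible sign patterns---of $\per$ against $\fin$ and of $\per^*$ against $\fin^*$---partition the indecomposables into the preprojective, regular and preinjective classes, so the pattern appearing in (2) forces $M$ to be preinjective.

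For the final assertion I would dualize the ``moreover'' of the preceding proposition line for line. Since $M$ is preinjective, $DM$ is preprojective and therefore lies in the take-off part by \cite[Theorem~3.3]{Chen08}; hence $\mu^*(M_i)=\mu(DM_i)$ tends to the dual take-off limit, giving $\vec\mu^*=\vec\mu_T^*$. The two-sided bound on $\vec\mu$ comes from facts already in hand: a preinjective family has GR-limit at most the landing limit $\vec\mu_L$, while Lemma~\ref{lemma-preinjectives} embeds a homogeneous band module of dimension $qh$ into $M$, so the measures $\mu(M_i)$ dominate $\mu_H$ and the limit is at least the homogeneous limit $\vec\mu_H$. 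Together these yield $\vec\mu_H\fleq\vec\mu\fleq\vec\mu_L$.

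The step I expect to be the main obstacle is the comeasure inequality in (1) $\Rightarrow$ (2), together with the bookkeeping that keeps the whole argument non-circular. Lemma~\ref{lemma-preinj} supplies only the measure relation for preinjectives; the comeasure relation rests entirely on the preprojective inequality $\per(DM)\fgt\fin(DM)$, so I must be careful to quote only the direction (1) $\Rightarrow$ (2) of the preceding proposition applied to the preprojective module $DM$, a direction established independently of the present preinjective statement. The genuinely delicate point is that the passage $M\mapsto DM$ and the passage $\mathcal F\mapsto\mathcal F^*$ each reverse inequalities, and only their composite reproduces both the stated condition and the claimed ``approach from the right''; I would therefore fix once and for all a dictionary translating $\flt$-statements about $DM$ into $\flt^*$-statements about $M$ and apply it uniformly throughout.
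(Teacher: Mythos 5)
Your proposal follows the paper's (implicit) argument exactly: the paper proves only the preprojective version in detail --- (1)$\Rightarrow$(2) by combining Lemma~\ref{lemma-preinj} with duality, (2)$\Rightarrow$(1) by the trichotomy among the three sign patterns, and the ``moreover'' via \cite[Theorem~3.3]{Chen08} together with Lemma~\ref{lemma-preinjectives} --- and leaves the preinjective statement to precisely the dualization you carry out, so your route is the intended one. One small slip worth noting: for a preinjective family the measures $\mu(M_i)$ \emph{decrease} to $\vec\mu$ (Corollary~\ref{corollary-approach-GR-limit}), not increase as you assert when justifying (2)$\iff$(3); this is exactly what ``approach from the right'' encodes, and it does not affect the rest of your argument.
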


For regular modules we obtain:

\begin{proposition}\label{rhombic-tubes}
For an indecomposable module $M$ with family $(M_i)$, the following statements
are equivalent.
\begin{enumerate}
\item $M$ occurs in a tube.
\item $\per(M)>_{\mathcal F}\fin(M)$ and $\per^*(M)<_{\mathcal F}\fin^*(M)$.
\item In the rhombic picture, the points $\rho(M_i)$ approach $\vec\rho(M)$ from below.
\end{enumerate}
\qed
\end{proposition}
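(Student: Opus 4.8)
The plan is to reduce the whole statement to one new inequality: for every module $M$ lying in a tube one has $\per(M)\fgt\fin(M)$. This is the tube analogue of Lemma~\ref{lemma-preinj}, but with the inequality reversed, and it is the only genuinely new ingredient. Once it is available for $M$ and for its dual $DM$ (which again lies in a tube), the periodic part strictly dominates the final part in both the measure and the comeasure, and this is exactly the content of statement~(2); the equivalences with (1) and (3) then follow formally from the two preceding propositions and from a direct reading of the direction of approach.

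First I would establish the key inequality. If $M$ is homogeneous, we have already computed $\mu(M)=\mu_H\cdot(h)^{q-1}$, so $\per(M)=(h)$ while $\fin(M)$ is empty; as the empty sequence is a proper subsequence of $(h)$, we get $\fin(M)\flt\per(M)$ at once. If instead $M$ is a non-homogeneous regular string module, I would use that, by the ray and coray structure of Section~\ref{section-families}, its family $(M_i)$ lies on a ray of monomorphisms, along which GR-measures increase; since the $M_i$ are pairwise distinct this increase is strict. By Lemma~\ref{lemma-family-data} every $\mu(M_i)$ has the form $\init\cdot\per^{m_i}\cdot\fin$ with $m_i=\mult(M_i)$ strictly increasing, so cancelling the common prefix $\init\cdot\per^{m_i}$ from $\mu(M_i)\flt\mu(M_{i+1})$ leaves $\fin\flt\per^{d}\cdot\fin$ for some $d\geq1$. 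By the canonical choice of the periodic part in Proposition~\ref{proposition-parts-of-mu}, guaranteed by Lemma~\ref{lemma-tails-have-high-measure}, the final part $\fin$ cannot begin with a full copy of $\per$; hence $\fin$ already deviates from $\per^\infty$ inside the first period, and the inequality $\fin\flt\per^{d}\cdot\fin$ forces the deviation to occur at some $\ell$ with $\fin_\ell>\per_\ell$, or else $\fin$ to be a proper prefix of $\per$. In either case $\fin\flt\per$, which is the desired $\per(M)\fgt\fin(M)$.

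With this inequality in hand, (1)$\Rightarrow$(2) follows by applying it both to $M$ and to $DM$. For (2)$\Rightarrow$(1) I would argue by elimination: the two preceding propositions are biconditionals, characterising the preprojective modules by $\per\fgt\fin$ together with the final part dominating the period in the comeasure, and the preinjective modules by $\fin$ dominating $\per$ already in the measure. Condition~(2) contradicts the preinjective characterisation in its measure inequality and the preprojective characterisation in its comeasure inequality, so $M$ is neither preprojective nor preinjective, and by the trichotomy of indecomposables over a tame hereditary algebra it lies in a tube. Finally (2)$\Leftrightarrow$(3) is immediate and is read off exactly as in the two preceding propositions: $\per(M)\fgt\fin(M)$ says that $\mu(M_i)$ tends to $\vec\mu(M)$ monotonically, the dual inequality says the same for $\mu^*(M_i)$ and $\vec{\mu^*}(M)$, and together these mean precisely that $\rho(M_i)$ approaches $\vec\rho(M)$ from below. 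The crux of the argument is the key inequality for tubes; within it the one delicate point is checking that the canonical final part never begins with a full period, and some further care is needed to keep straight the two opposite orderings in the comeasure coordinate.
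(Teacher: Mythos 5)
Your proposal is correct, and it supplies an actual argument where the paper simply asserts the proposition with a \qed, deferring the substance to its later case-by-case discussion of the tubes. The two routes to the key inequality $\fin(M)\flt\per(M)$ are genuinely different. The paper's intended justification is combinatorial and explicit: in items (L1)--(L5), (R1)--(R5$''$) and (H1)--(H3) of Section~\ref{section-tubes} it describes, for each kind of tube, exactly what the final part looks like (an incomplete piece of the hook sequence $\mathcal L$ or $\mathcal R$, obtained by truncating a hook and dropping the rest, in the spirit of the proof of Lemma~\ref{lemma-preinj}), from which the inequality is read off. You instead derive it abstractly: strict monotonicity of $\mu$ along the ray, constancy of $\init$, $\per$, $\fin$ within a family (Lemma~\ref{lemma-family-data}), cancellation of the common prefix to get $\fin\flt\per^{d}\cdot\fin$, and then the maximality of the periodic part (no full copy of $\per$ at the head of $\fin$) to localize the first deviation inside one period. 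This buys independence from the string combinatorics of the specific tube, at the cost of leaning on the precise formulation of the IPF decomposition; your flagged ``delicate point'' is indeed the only place where the canonical choice of $\per$ matters, and it does hold by the construction in Proposition~\ref{proposition-parts-of-mu}. Your closing elimination argument for (2)$\Rightarrow$(1) is exactly the paper's own strategy (stated in the proof of the preprojective proposition: the converses follow once the forward implications are known for all three classes and the three versions of condition~(2) are pairwise incompatible), and your independent proof of the regular forward direction is precisely what removes the apparent circularity there. One caution: the paper's typography for the comeasure inequalities is internally inconsistent (compare Proposition~\ref{proposition-type-AR-component} with the starred and unstarred orderings in Section~\ref{section-families}); your reading, $\fin(DM)\flt\per(DM)$ for $M$ in a tube, is the intended one.
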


\begin{example}\label{first-example-two}
We apply the above results to the two sink-two source quiver from Example~\ref{first-example}.
There are five projective modules, but six
families of preprojective modules, corresponding to the string types
$bb_*$, $bc_*$, $be_*$, $eb_*$, $ec_*$, $ee_*$.  The families approach the rhombic limit $\vec\rho$
from the left; the rhombic limit has the form $\vec\rho=(\vec\mu_T,\vec\mu^*)$
where $\vec\mu_L^*\fleq^*\vec\mu^*\fleq^*\vec\mu_H^*$.

\smallskip
Dually, the preinjective modules have string types $aa_*$, $ad_*$ ,$cd_*$, $dd_*$, $da_*$, $ca_*$. They approach the rhombic limit from the right; the limit has the form $(\vec\mu,\vec\mu_T^*)$
where $\vec\mu_H\fleq \vec\mu \fleq \vec\mu_L$.

\smallskip
The remaining families consist of regular modules, they all approach the rhombic limit from 
below.

\smallskip
All families and their limits are shown in in Figure~\ref{figure-rhombic-one}.

\begin{figure}[ht]
\caption{\label{figure-rhombic-one}The rhombic picture for the first example}
$$
\hbox{\beginpicture
\setcoordinatesystem units <0.35cm,0.35cm>
\def\sput#1{\put{\makebox(0,0){$\ssize #1\strut$}}}
\put{} at -14 24.5
\put{} at 15 -2
\arr{-14 14}{2 -2}
\arr{-2 -2}{14 14}
\put{$\mu^*$} at 3 -2
\put{$\mu$} at 15 14
\setsolid
\plot 1.8 2.2  2.2 1.8 /
\plot 3.8 4.2  4.2 3.8 /
\plot 5.8 6.2  6.2 5.8 /
\plot 7.8 8.2  8.2 7.8 /
\plot 9.8 10.2  10.2 9.8 /
\plot 11.8 12.2  12.2 11.8 /
\put{$\ssize \vec\mu_T=11\overline{221}$} at 4.2 1.3
\put{$\ssize \vec\mu_H=1121\overline 5$} at 6.2 3.3
\put{$\ssize 11212\overline{32}$} at 7.7 5.3
\put{$\ssize 1121\overline{221}$} at 9.7 7.3
\put{$\ssize 1112\overline{32}$} at 11.7 9.3
\put{$\ssize \vec\mu_L=111\overline{221}$} at 14.2 11.3
\plot -2.2 1.8  -1.8 2.2 /
\plot -4.2 3.8  -3.8 4.2 /
\plot -6.2 5.8  -5.8 6.2 /
\plot -8.2 7.8  -7.8 8.2 /
\plot -10.2 9.8  -9.8 10.2 /
\plot -12.2 11.8  -11.8 12.2 /
\put{$\ssize \vec\mu_T^*=11\overline{221}$} at -4.2 1.3
\put{$\ssize \vec\mu_H^*=1121\overline 5$} at -6.2 3.3
\put{$\ssize 11212\overline{32}$} at  -7.7 5.3
\put{$\ssize 1121\overline{221}$} at -9.7 7.3
\put{$\ssize 1112\overline{32}$} at -11.7 9.3
\put{$\ssize \vec\mu_L^*=111\overline{221}$} at -14.2 11.3
\setdots<2pt>
\plot -.5 3.5  .5 4.5 / \plot 1.5 5.5  10.5 14.5 /
\plot -2.5 5.5   .5 8.5 /
\plot -4.5 7.5  -3.5 8.5 /  \plot -.5 11.5  .5 12.5 /  \plot 3.5 15.5  4.5 16.5 /
\plot -6.5 9.5  -5.5 10.5 / \plot -.5 15.5  .5 16.5 /  \plot 3.5 19.5 4.5 20.5 /
\plot -8.5 11.5  -7.5 12.5 /  \plot -4.5 15.5 -3.5 16.5 /  \plot -.5 19.5  .5 20.5 /
\plot -10.5 13.5  -9.5 14.5 /  \plot -4.5 19.5  -3.5 20.5 /  \plot -.5 23.5  .5 24.5 /
\plot .5 3.5  -.5 4.5 / \plot -1.5 5.5  -10.5 14.5 /
\plot 2.5 5.5   -.5 8.5 /
\plot 4.5 7.5  3.5 8.5 /  \plot .5 11.5  -.5 12.5 /  \plot -3.5 15.5  -4.5 16.5 /
\plot 6.5 9.5  5.5 10.5 / \plot .5 15.5  -.5 16.5 /  \plot -3.5 19.5  -4.5 20.5 /
\plot 8.5 11.5  7.5 12.5 /  \plot 4.5 15.5 3.5 16.5 /  \plot .5 19.5  -.5 20.5 /
\plot 10.5 13.5  9.5 14.5 /  \plot 4.5 19.5  3.5 20.5 /  \plot .5 23.5  -.5 24.5 /
\setsolid
{\color{magenta}%
  \arr{0 2.5}{0 4}
  \sput{cc} at .5 2.5
  \arr{0 14.5}{0 16}
  \sput{db} at .5 14.5
  \arr{0 22.5}{0 24}
  \sput{ae} at .5 22.5
  \arr{-6 8.5}{-6 10}
  \sput{cb} at -5.5 8.5
  \arr{6 8.5}{6 10}
  \sput{dc} at 6.5 8.5
  \arr{-10 12.5}{-10 14}
  \sput{ce} at -9.5 12.5
  \arr{10 12.5}{10 14}
  \sput{ac} at 10.5 12.5
  \arr{-4 18.5}{-4 20}
  \sput{de} at -3.5 18.5
  \arr{4 18}{4 20}
  \sput{ab} at 4.5 18.5
}%
{\color{darkgreen}%
  \arr{0 10.5}{0 12}
  \sput{ea} at .5 10.5
  \arr{-4 14.5}{-4 16}
  \sput{ba} at -3.5 14.5 
  \arr{4 14.5}{4 16}
  \sput{ed} at 4.5 14.5
  \arr{0 18.5}{0 20}
  \sput{bd} at .5 18.5
}%
{\color{blue}%
  \arr{-9.5 12}{-8 12}
  \sput{bb,bc} at -9.5 11.5
  \arr{-11.5 14}{-10 14}
  \sput{be,ee} at -11.5 13.5
  \arr{-7.5 10}{-6 10}
  \sput{eb} at -7.5 9.5
  \arr{-3.5 6}{-2 6}
  \sput{ec} at -3.5 5.5
}%
{\color{brown}%
  \arr{9.5 12}{8 12}
  \sput{cd,dd} at 9.5 11.5
  \arr{11.5 14}{10 14}
  \sput{aa,ad} at 11.5 13.5
  \arr{7.5 10}{6 10}
  \sput{da} at 7.5 9.5
  \arr{3.5 6}{2 6}
  \sput{ca} at 3.5 5.5
}%
{\color{red}%
  \arr{0 6.5}{0 8}
  \sput{H} at .5 6.5
}
\endpicture}
$$
\end{figure}
\end{example}

\section{Auslander-Reiten Sequences in the Rhombic Picture}\label{section-rhombic}

Here we present a result that further reveals 
the connection between
Auslander-Reiten sequences and the rhombic picture.
In this section, we put no condition on the underlying algebra.
The following statement holds, as in Proposition \ref{rhombic-tubes}.

\medskip
\begin{remark}
If $M$ is an indecomposable module in
a stable tube, and $(M_i)$ the family of $M$, then the points $\rho(M_i)$
approach the rhombic limit $\vec\rho(M)$ from below.
\end{remark}

\begin{theorem}\label{theorem-parallelograms} Let $0\to A\to B_1\oplus B_2\to C\to 0$ 
be an Auslander-Reiten sequence in a stable tube  
such that the middle term consists of two indecomposable summands.  
Then the rhombic limits of $A,B_1,B_2,C$ form a (possibly degenerate) 
parallelogram in the rhombic picture.  
Moreover, the nondegenerate sides of the parallelogram 
are parallel to the $\mu$ and $\mu^*$ axes.
\end{theorem}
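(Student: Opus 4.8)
The plan is to analyze the four modules $A, B_1, B_2, C$ in a stable tube and track how their rhombic limits $\vec\rho$ are determined by the families they belong to. In a stable tube, an Auslander--Reiten sequence $0\to A\to B_1\oplus B_2\to C\to 0$ with two middle summands arises exactly when $A$ is not quasi-simple, so $A = \tau C$ sits one level up. The irreducible maps $A\to B_1$, $A\to B_2$, $B_1\to C$, $B_2\to C$ are the ray and coray maps in the tube: one of $B_1, B_2$ lies on the same ray as $A$ (obtained by adding a hook), the other lies on the same coray as $A$ (obtained from the $\tau$-orbit shift). First I would fix the convention that $B_1$ shares a ray with $A$ and $C$, while $B_2$ shares a coray with $A$ and $C$, so that $A, B_1$ sit on a common ray and $A, B_2$ on a common coray, with $C$ completing the mesh.

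**Reducing rhombic limits to family invariants.**
The key reduction is that by Lemma~\ref{lemma-family-data} and Remark~\ref{remark-small-modules}, the GR-limit $\vec\mu(M)$ and GR-colimit $\vec\mu^*(M)$ depend only on the \emph{family} of $M$ — equivalently, on the string type — not on the multiplicity. Two modules in the same family have the same rhombic limit. So I would argue that $A$ and $B_1$, lying on the same ray, have string types differing only by a hook on one end; their $\vec\mu$ coordinates agree because the Greedy Algorithm anchors its starting point near the ends and the added hook only changes the multiplicity of the periodic part in the limit. Dually, $A$ and $B_2$ lie on the same coray, so their $\vec\mu^*$ coordinates agree. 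This gives $\vec\rho(A) = (\vec\mu(A), \vec\mu^*(A))$, with $\vec\rho(B_1)$ sharing the first coordinate $\vec\mu(A)$ and $\vec\rho(B_2)$ sharing the second coordinate $\vec\mu^*(A)$.

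**Closing the parallelogram via $C$.**
Since $C$ lies on the same ray as $B_2$ (adding a hook to $B_2$'s string on the ray-end) it shares $B_2$'s coray-direction invariant, so $\vec\mu^*(C) = \vec\mu^*(B_2)$; and $C$ lies on the same coray as $B_1$, so $\vec\mu(C) = \vec\mu(B_1)$. I would verify these four coincidences carefully using the family-structure of the mesh: the four modules' limits are $(\alpha,\gamma)$, $(\beta,\gamma)$, $(\alpha,\delta)$, $(\beta,\delta)$ for suitable measures $\alpha = \vec\mu(A) = \vec\mu(B_1)$, $\beta = \vec\mu(B_2) = \vec\mu(C)$, $\gamma = \vec\mu^*(A) = \vec\mu^*(B_2)$, $\delta = \vec\mu^*(B_1) = \vec\mu^*(C)$. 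These four points automatically form a parallelogram (indeed an axis-parallel rectangle in the $\mathcal F\times\mathcal F^*$ coordinates), whose nondegenerate sides are parallel to the $\mu$ and $\mu^*$ axes; degeneracy occurs precisely when $\alpha=\beta$ or $\gamma=\delta$, i.e.\ when the relevant hook does not change the limiting measure.

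**Anticipated obstacle.**
The main obstacle is justifying rigorously that passing from a string to the string-plus-one-hook on a ray leaves the \emph{limiting} measure $\vec\mu$ unchanged while affecting only $\vec\mu^*$, and symmetrically for corays. The subtlety is that adding a hook changes the string at one end, and I must confirm that the Greedy Algorithm's output, in the limit of large multiplicity, is insensitive to this particular end-modification on the relevant side but sensitive on the other. I would handle this by appealing to the fact that for modules in one family the measures differ only in the multiplicity of the periodic part (Lemma~\ref{lemma-family-data}), combined with a direct check that the ray-map and coray-map respectively preserve the string type governing $\vec\mu$ and $\vec\mu^*$. Once this insensitivity is established, the rectangle structure is purely formal.
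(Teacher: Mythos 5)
Your proposal follows essentially the same route as the paper: the rhombic limit depends only on the family of a module, modules on a common ray share the GR-limit $\vec\mu$, modules on a common coray share the GR-colimit $\vec\mu^*$, and the mesh structure of the Auslander-Reiten sequence then forces the four limit points into an axis-parallel rectangle; your final display of the points $(\alpha,\gamma),(\beta,\gamma),(\alpha,\delta),(\beta,\delta)$ is exactly the paper's conclusion. Two things should be repaired. First, your intermediate prose contradicts that final display: you assert that $B_1$ shares a ray with both $A$ and $C$ (in fact $B_1$ shares a ray with $A$ but a coray with $C$, and $B_2$ shares a coray with $A$ but a ray with $C$), and you write that since $C$ lies on the same ray as $B_2$ one gets $\vec\mu^*(C)=\vec\mu^*(B_2)$, and since $C$ lies on the same coray as $B_1$ one gets $\vec\mu(C)=\vec\mu(B_1)$ --- here the invariants are swapped: a common ray forces equal $\vec\mu$ and a common coray forces equal $\vec\mu^*$, which is what your summary with $\beta=\vec\mu(B_2)=\vec\mu(C)$ and $\delta=\vec\mu^*(B_1)=\vec\mu^*(C)$ correctly uses. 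Second, your ``anticipated obstacle'' is not actually an obstacle, and resolving it via the Greedy Algorithm and string types would needlessly restrict the argument to type $\widetilde{\mathbb A}_n$, whereas the theorem as placed in Section~\ref{section-rhombic} concerns a stable tube over an arbitrary algebra: two modules on one ray have the same GR-limit simply because each family is a cofinal subsequence of that ray and GR-measures increase monotonically along rays (dually for corays and comeasures), so no string combinatorics is needed.
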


\begin{proof} We may assume that the map $A\to B_1$ is monic; 
otherwise, exchange $B_1$ and $B_2$. 
Now since the modules $A,\, B_1$ lie on the same ray in the tube, 
they have the same GR-limit.   
Hence, their rhombic limits lie on a line parallel to the $\mu^*$ axis 
in the rhombic picture.  
The same is true for the modules $B_2,\, C$.  
Similarly, $A,\, B_2$ are on the same coray in the tube and, 
hence, they have the same GR-colimit.  
Thus, their rhombic limits lie on a line parallel 
to the $\mu$ axis in the rhombic picture.  
The same is true for the modules $B_1,\, C$.  The result follows.
\end{proof}

\begin{example}
For the path algebra of the quiver studied in Example~\ref{first-example} and \ref{first-example-two}
there are two extended tubes.  
The right tube is pictured in Example~\ref{first-example}.
There are nine families, corresponding to the types 
$ab_*,ac_*,ae_*,cb_*,cc_*,ce_*,db_*,dc_*,de_*$.  
Note that all modules in this tube satisfy  $\per=221$ and $\per^*=221$. 

\smallskip
The left tube pictured in Figure~\ref{figure-left} consists of 
four families corresponding to the string types $ba_*,bd_*,ea_*,ed_*$; all have
periodic parts $\per=32$ and $\per^*=32$.  

\smallskip
We see that each Auslander-Reiten sequence in one of the tubes gives rise to 
a non-degenerate parallelogram in the rhombic picture in Figure~\ref{figure-rhombic-one}.
\end{example}

\begin{figure}[ht]
\caption{\label{figure-left}The left tube in the first example}
$$
\hbox{\beginpicture
\setcoordinatesystem units <0.6cm,0.6cm>
\put{} at 0 6.6
\put{} at 4 -1
\put{$\ssize bd_3$} at 0 6
\put{$\ssize ea_2$} at 2 6
\put{$\ssize bd_3$} at 4 6
\put{$\ssize ed_5$} at 1 5
\put{$\ssize ba_5$} at 3 5
\put{$\ssize ea_7$} at 0 4
\put{$\ssize bd_8$} at 2 4
\put{$\ssize ea_7$} at 4 4
\put{$\ssize ba_{10}$} at 1 3
\put{$\ssize ed_{10}$} at 3 3
\put{$\ssize bd_{13}$} at 0 2
\put{$\ssize ea_{12}$} at 2 2
\put{$\ssize bd_{13}$} at 4 2
\put{$\ssize ed_{15}$} at 1 1
\put{$\ssize ba_{15}$} at 3 1
\arr{.3 5.7}{.7 5.3}
\arr{2.3 5.7}{2.7 5.3}
\arr{1.3 5.3}{1.7 5.7}
\arr{3.3 5.3}{3.7 5.7}
\arr{.3 4.3}{.7 4.7}
\arr{2.3 4.3}{2.7 4.7}
\arr{1.3 4.7}{1.7 4.3}
\arr{3.3 4.7}{3.7 4.3}
\arr{.3 3.7}{.7 3.3}
\arr{2.3 3.7}{2.7 3.3}
\arr{1.3 3.3}{1.7 3.7}
\arr{3.3 3.3}{3.7 3.7}
\arr{.3 2.3}{.7 2.7}
\arr{2.3 2.3}{2.7 2.7}
\arr{1.3 2.7}{1.7 2.3}
\arr{3.3 2.7}{3.7 2.3}
\arr{.3 1.7}{.7 1.3}
\arr{2.3 1.7}{2.7 1.3}
\arr{1.3 1.3}{1.7 1.7}
\arr{3.3 1.3}{3.7 1.7}
\arr{.3 .3}{.7 .7}
\arr{2.3 .3}{2.7 .7}
\arr{1.3 .7}{1.7 .3}
\arr{3.3 .7}{3.7 .3}
\setdots<2pt>
\plot .5 6   1.5 6 /
\plot 2.5 6  3.5 6 /
\setsolid
\plot 0 7  0 6.5 /
\plot 0 5.5  0 4.5 /
\plot 0 3.5  0 2.5 /
\plot 0 1.5  0 -1 /
\plot 4 7  4 6.5 /
\plot 4 5.5  4 4.5 /
\plot 4 3.5  4 2.5 /
\plot 4 1.5  4 -1 /
\multiput{$\vdots$} at 1 -.5  3 -.5 /
\endpicture}
$$
\end{figure}

\section{The Tiling Theorem}\label{section-tiling}

We return to the case where $M$ is a representation for a quiver
of type $\widetilde{\mathbb A}_n$. We have seen that $M$ is regular
if and only if $\per(M)>_{\mathcal F}\fin(M)$ and $\per^*(M)<_{\mathcal F}\fin^*(M)$.
We can say more.

\subsection{Three kinds of tubes}

As we have fixed an orientation of the quiver, we can define the {\it left
tube} as the exceptional tube in which irreducible monomorphisms are given
by adding a hook on the left; the {\it right tube} is similarly defined.
It is well-known that all remaining tubes are homogeneous.

\begin{proposition}\label{proposition-periodic-tubes}
An indecomposable regular module $M$ is in the left tube, in the right tube,
or in a homogeneous tube if and only if the periodic part $\per(M)$ is
$\mathcal L$, $\mathcal R$, or $(h)$, respectively, where $h=n+1$.
\end{proposition}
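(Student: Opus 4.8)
The plan is to exploit the correspondence between irreducible monomorphisms in a tube and the hook-addition operations on the underlying strings, which was established in the preliminaries. Recall that we defined the left tube as the exceptional tube in which irreducible monomorphisms arise by adding a hook on the left end of the string, and dually for the right tube; the remaining tubes are homogeneous. The key observation is that the periodic part $\per(M)$ is determined by which hook sequence the Greedy Algorithm iterates once the take-off phase is complete. Since the periodic part is a family invariant by Lemma~\ref{lemma-family-data}, it suffices to compute it for a single large enough module in each tube.

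First I would treat the homogeneous case. If $M$ lies in a homogeneous tube, then $M$ is a band module of the form $H[q]$, and we showed in Section~\ref{section-gr-measures} that $\mu(H[q])=\mu_H\cdot(h)^{q-1}$. Thus the periodic part is exactly $(h)$, where $h=n+1=|Q_0|$. This identification is immediate from the formula, so this direction requires no further work.

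Next I would handle the two exceptional tubes. Suppose $M$ is in the left tube, so that $M$ is a (non-homogeneous) regular string module whose irreducible monomorphisms add a hook on the left. As $M$ becomes large, its string extends by repeatedly adding left hooks, so the left-hook sequence $(\lambda_i)$ computed by the Greedy Algorithm contains arbitrarily many full iterations of $\mathcal L$, while the right-hook sequence $(\rho_i)$ remains finite (bounded by the fixed coray direction). By Proposition~\ref{proposition-parts-of-mu}, the periodic part is the full hook sequence that gets iterated with growing multiplicity; since only the left hooks iterate, the periodic part must be $\mathcal L$. The right tube is dual, giving periodic part $\mathcal R$. I would phrase the left/right cases once and invoke the symmetry of the argument for the other.

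The main obstacle, and the step I would be most careful about, is the interaction with the take-off convention: we assumed without loss of generality that $\mathcal L<_{\mathcal F}\mathcal R$, so the Greedy Algorithm prefers the right direction during the take-off phase even for modules in the left tube. I must verify that, despite this preference in the initial segment, the \emph{periodic} part of a left-tube module is still genuinely $\mathcal L$ rather than $\mathcal R$. This follows because the family of a left-tube module grows only in the left direction, so $(\rho_i)$ is exhausted after finitely many entries while $(\lambda_i)$ supplies the unbounded iterations that constitute the periodic part; the take-off preference only affects the finite-length initial part, not the periodicity. Finally, since the three possibilities $\mathcal L$, $\mathcal R$, $(h)$ for $\per(M)$ are mutually exclusive and the three tube types are mutually exclusive and exhaustive among regular modules, the three one-directional implications together yield the stated biconditional.
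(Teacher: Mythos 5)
Your strategy is the same as the paper's: its proof simply cites (L1), (R1) and (H1) from the discussion of the tubes in Section~\ref{section-tubes}, which identify $\per(M)$ tube by tube exactly as you propose, with the homogeneous case read off from $\mu(H[q])=\mu_H\cdot(h)^{q-1}$. The gap is in your treatment of the left tube. You argue that $(\rho_i)$ ``remains finite'' because the family grows only to the left; that is true for a sink near the right end of the string, but the Greedy Algorithm takes a supremum over \emph{all} sinks, and for a long left-tube string a sink near the left end yields a right-hook sequence containing many full copies of $\mathcal R$. Under the convention $\mathcal L\flt\mathcal R$ that candidate, whose periodic block is built from $\mathcal R$, is a genuine competitor, and your sentence ``the take-off preference only affects the finite-length initial part, not the periodicity'' is precisely the assertion that requires proof. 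The paper disposes of it with the starting-point variation argument of Case 1 in Section~\ref{section-gr-measures}: because the right end of a left-tube string arises by deleting a cohook, the last right hook is truncated, so the tail following the $\mathcal R$-block is larger than $\mathcal R$ in $\mathcal F$, and moving the starting point one period to the right strictly improves the candidate; iterating drives the optimal starting point to within one period of the right end, leaving only the $\mathcal L$-block as the periodic part. Some form of this comparison has to appear in your argument.

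A second, smaller defect: your final step assumes that $\mathcal L$, $\mathcal R$ and $(h)$ are mutually exclusive, which fails when $Q$ is symmetric, since then $\mathcal L=\mathcal R$; the paper's proof explicitly records that in this case the GR-data cannot distinguish the two exceptional tubes. (The coincidence of $(h)$ with an exceptional periodic part can also occur; in Example~\ref{second-example} the left tube and the homogeneous tubes share their rhombic limit.) The three ``only if'' implications survive, but deducing the converse from exclusivity does not; you must either restrict to the asymmetric case or add the caveat.
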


\begin{proof}
The result will follow from our investigation of the tubes in Section~\ref{section-tubes}.
\end{proof}

Thus, it can be read off from the pair $(\mu(M),\mu^*(M))$ whether $M$ is a regular
module, and if so in which tube $M$ occurs.  

\smallskip
If the tube is exceptional, then
the pair even determines the ray and the coray of $M$.  To visualize this, 
we introduce two partial orderings.

\bigskip
\subsection{The staircase ordering}

\smallskip
We consider families of points in the rhombic picture 
such that the limits of their measures agree and are
approached from below.  
We have seen in Chapter~\ref{section-families} that the 
families given by a ray in an exceptional tube have this 
property.

\begin{definition}
For $\mu\in\mathcal F$, let $S(\vec\mu)$ be the set of equivalence
classes of sequences $(\mu_i,\mu_i^*)$ in $\mathcal F\times\mathcal F^*$
such that the $\mu_i$ are strictly increasing with limit $\vec\mu$
and the $\mu_i^*$ are strictly decreasing. 
Two sequences are {\it equivalent} if the difference set is finite.

\smallskip
For two sequences $\mathcal M=(\mu_i,\mu_i^*),\mathcal M'=(\mu_i',\mu_i'^*)$ in $S(\vec\mu)$
we define $\mathcal M'\stairleq \mathcal M$ if for almost all $n$ there is an $\ell$
such that
$$\mu'_\ell\fleq \mu_n\flt\mu'_{\ell+1}\quad\text{and}\quad \mu_\ell'^*\fleq^*\mu_n^*.$$
\end{definition}

\begin{lemma}
For each $\mu\in\mathcal F$, the relation $\stairleq$ is a partial ordering
on $S(\mu)$.
\end{lemma}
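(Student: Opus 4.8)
The plan is to verify the three defining properties of a partial order---reflexivity, antisymmetry, and transitivity---directly from the definition of $\stairleq$ on the set $S(\mu)$ of equivalence classes. Throughout I must remember that I am working with \emph{equivalence classes} of sequences (two sequences being identified if they differ in only finitely many terms), so at every stage I should check that the relation is well-defined on classes, i.e.\ insensitive to changing finitely many entries of either sequence. Since the defining condition already begins ``for almost all $n$,'' this compatibility with the equivalence relation should be automatic, but I will remark on it explicitly.

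For reflexivity, given $\mathcal M=(\mu_i,\mu_i^*)$ I take $\ell=n$: since the $\mu_i$ are strictly increasing, $\mu_n\flt\mu_{n+1}$, and $\mu_n\fleq\mu_n$ together with $\mu_n^*\fleq^*\mu_n^*$ hold trivially, so $\mathcal M\stairleq\mathcal M$. For antisymmetry, suppose $\mathcal M'\stairleq\mathcal M$ and $\mathcal M\stairleq\mathcal M'$. The first inequality positions each $\mu_n$ in a half-open ``staircase interval'' $[\mu'_\ell,\mu'_{\ell+1})$ determined by $\mathcal M'$, and pairs this with the comeasure comparison $\mu_\ell'^*\fleq^*\mu_n^*$; the reverse relation does the symmetric thing. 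The key point is that since both measure-sequences are strictly increasing with the \emph{same} limit $\vec\mu$, the two interval systems must interleave cofinally, forcing the matching indices to align up to a finite discrepancy, and then the two comeasure conditions squeeze $\mu_n^*$ and $\mu_\ell'^*$ together. I expect to conclude that the two sequences agree in all but finitely many terms, hence represent the same class.

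For transitivity, I would assume $\mathcal M''\stairleq\mathcal M'$ and $\mathcal M'\stairleq\mathcal M$ and produce, for almost all $n$, an index $m$ with $\mu''_m\fleq\mu_n\flt\mu''_{m+1}$ and $\mu_m''^*\fleq^*\mu_n^*$. The strategy is to compose the two staircase matchings: the second relation places $\mu_n$ in an interval $[\mu'_\ell,\mu'_{\ell+1})$ of $\mathcal M'$ with $\mu_\ell'^*\fleq^*\mu_n^*$, and the first relation places $\mu'_\ell$ (or the relevant boundary point) in an interval of $\mathcal M''$ with the corresponding comeasure inequality. Chaining $\mu_m''^*\fleq^*\mu_\ell'^*\fleq^*\mu_n^*$ handles the comeasure side by transitivity of $\fleq^*$, and chaining the measure placements handles the measure side.

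The main obstacle will be transitivity, specifically the bookkeeping at the boundaries of the staircase intervals: the condition is stated with a \emph{half-open} interval $\mu'_\ell\fleq\mu_n\flt\mu'_{\ell+1}$, so when I compose matchings I must take care that the strict upper endpoint and the non-strict lower endpoint propagate correctly and that the index $m$ I select genuinely satisfies the strict inequality $\mu_n\flt\mu''_{m+1}$ rather than just $\mu_n\fleq\mu''_{m+1}$. I would handle this by choosing $m$ to be the largest index with $\mu''_m\fleq\mu_n$, which exists and is finite because the $\mu''_i$ strictly increase to $\vec\mu>\mu_n$; this choice forces $\mu_n\flt\mu''_{m+1}$ by maximality, and then I verify the comeasure inequality survives by monotonicity of the $\mu_i''^*$ together with the two given comeasure conditions.
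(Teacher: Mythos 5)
Your plan is correct and follows essentially the same route as the paper: the paper writes out only antisymmetry, using exactly the squeeze you describe (from $\mu_m\fleq\mu_\ell'\fleq\mu_n$ one gets $m\leq n$, hence $\mu_m^*\fleq^*\mu_\ell'^*\fleq^*\mu_n^*\fleq^*\mu_m^*$ forces equality and $m=n$, so the sequences agree up to an initial segment). Your additional treatment of reflexivity and of transitivity (choosing $m$ maximal with $\mu''_m\fleq\mu_n$ so that the half-open interval condition and the comeasure inequality both propagate) correctly fills in the parts the paper declares routine.
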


\begin{proof}We only show anti-symmetry: Suppose $M=(\mu_i,\mu_i^*),M'=(\mu_i',\mu_i'^*)$ represent 
equivalence classes of sequences in $S(\mu)$ and satisfy $M'\stairleq M$ and $M\stairleq M'$.
We verify that $M$ and $M'$ are equivalent. 

\smallskip
There exist $L,N\in \mathbb N$ such that for each $n\geq N$ there is $\ell\geq L$ with
$$\mu_\ell'\fleq\mu_n\flt
\mu_{\ell+1}'\quad\text{and}\quad \mu_\ell'^*\fleq^*\mu_n^*$$
and such that for each $\ell\geq L$ there is $m$ with
$$\mu_m\fleq \mu'_\ell\flt\mu_{m+1}\quad\text{and}\quad  \mu_m^*\fleq^*\mu_\ell'^*.$$
Note that given $n$, both $\ell$ and $m$ are uniquely determined since the sequences $\mu_i$, $\mu_i'$
are strictly increasing.
Since $\mu_m\fleq \mu_n$, we obtain $m\leq n$ and hence $\mu_n^*\fleq^* \mu_m^*$ since the sequence
$\mu_i^*$ is strictly decreasing.  
Together with the above $\mu_m^*\fleq^*\mu_\ell'^*\fleq^* \mu_n^*$
we obtain $\mu_m^*=\mu_n^*$, hence $m=n$. It follows $\mu_n=\mu_\ell'$ and $\mu_n^*=\mu_\ell'^*$.
We have shown that $M$ and $M'$ differ at most by an initial segment.
\end{proof}

\begin{example}\label{second-example}
As second example we 
consider the path algebra $kQ$ for the quiver
$$\beginpicture
  \setcoordinatesystem units <0.7cm, 0.7cm>
  \put{$Q:$} at 0 0
  \multiput{$\sssize\bullet$} at 2 1  3 .5  3 -.5  2 -1 /
  \arr{2.2 .9} {2.8 .6}
  \arr{3 .3} {3 -.3} 
  \arr{2.8 -.6}{2.2 -.9}
  \arr{2 .8}{2 -.8}
  \put{$a$} at 1.6 1.1
  \put{$b$} at 1.6 -1.1
  \put{$c$} at 3.3 -.7
  \put{$d$} at 3.4 .7
\endpicture
$$

The module $cc_1$ is simple regular; the modules on its ray,
$$cc_1,\; cd_2, \;cb_4, \;cc_5, \;cd_6, \;cb_8, \;cc_9,\;\ldots$$ 
have the following positions
in the rhombic picture.  They occur in three families, which
are indicated by solid lines in Figure~\ref{figure-stairs-regular}.
To emphasize the shape of the staircase, we rotate the picture by
$-45^\circ$.

\begin{figure}[ht]
\caption{\label{figure-stairs-regular}The staircase ordering for regular modules}
$$
\hbox{\beginpicture
\setcoordinatesystem units <0.4cm,0.4cm>
\setplotarea x from 0 to 13, y from 0 to 8
\axis bottom ticks unlabeled quantity 14 /
\axis left ticks unlabeled quantity 9 /
\setplotarea x from 0 to 13, y from 11 to 13
\axis left ticks unlabeled quantity 3 /
\put{} at -5 -8
\put{} at 15 15
\arr{-2 0}{15 0}
\arr{0 15}{0 -2}
\put{$\mu^*$} at -1 -2
\put{$\mu$} at 15 -1
\put{\footnotesize\makebox[0pt][r]{1}} at -.8 1
\put{\footnotesize\makebox[0pt][r]{11}} at -.8 2
\put{\footnotesize\makebox[0pt][r]{111.3}} at -.8 3
\put{\footnotesize\makebox[0pt][r]{111.2}} at -.8 4
\put{\footnotesize\makebox[0pt][r]{111.211.3}} at -.8 5
\put{\footnotesize\makebox[0pt][r]{111.211.2}} at -.8 6
\put{\footnotesize\makebox[0pt][r]{111.211.211.3}} at -.8 7
\put{\footnotesize\makebox[0pt][r]{111.211.211.2}} at -.8 8
\put{\footnotesize\makebox[0pt][r]{1111}} at -.8 11
\put{\footnotesize\makebox[0pt][r]{1111.211}} at -.8 12
\put{\footnotesize\makebox[0pt][r]{1111.211.211}} at -.8 13
\put{\footnotesize\rotatebox{-90}{\makebox[0pt][l]{1}}} at 1 -.8
\put{\footnotesize\rotatebox{-90}{\makebox[0pt][l]{11}}} at 2 -.8
\put{\footnotesize\rotatebox{-90}{\makebox[0pt][l]{112}}} at 3 -.8
\put{\footnotesize\rotatebox{-90}{\makebox[0pt][l]{1121}}} at 4 -.8
\put{\footnotesize\rotatebox{-90}{\makebox[0pt][l]{111.3}}} at 5 -.8
\put{\footnotesize\rotatebox{-90}{\makebox[0pt][l]{111.23}}} at 6 -.8
\put{\footnotesize\rotatebox{-90}{\makebox[0pt][l]{111.213}}} at 7 -.8
\put{\footnotesize\rotatebox{-90}{\makebox[0pt][l]{111.211.3}}} at 8 -.8
\put{\footnotesize\rotatebox{-90}{\makebox[0pt][l]{111.211.23}}} at 9 -.8
\put{\footnotesize\rotatebox{-90}{\makebox[0pt][l]{111.211.213}}} at 10 -.8
\put{\footnotesize\rotatebox{-90}{\makebox[0pt][l]{111.211.211.3}}} at 11 -.8
\put{\footnotesize\rotatebox{-90}{\makebox[0pt][l]{111.211.211.23}}} at 12 -.8
\put{\footnotesize\rotatebox{-90}{\makebox[0pt][l]{111.211.211.213}}} at 13 -.8
\multiput{$\bullet$} at 1 1  2 2  3 11  4 4  5 3  6 12  7 6  8 5  9 13  10 8  11 7 /
\plot 1 1  4 1  4 4  7 4  7 6  10 6  10 8  13 8 /
\plot 2 2  5 2  5 3  8 3  8 5  11 5  11 7  13 7 /
\plot 3 11  6 11  6 12  9 12  9 13  13 13 /
\put{$cc_*$} at 14 8
\put{$cd_*$} at 14 7
\put{$cb_*$} at 14 13
\endpicture}$$
\end{figure}

The families of regular modules satisfy 
$cd_*\leq_{\rm stair} cc_*\leq_{\rm stair} cb_*$.  
Note $cc_*$ and $cd_*$ approach the same
limit in the rhombic picture.

\smallskip
By comparison, families in the preprojective component with the same 
rhombic limit may be incomparable with respect to $\leq_{\rm stair}$.
In Figure~\ref{figure-stairs-preprojective}, we picture one ray starting at the simple projective module $bb_1$.
The families $bc_*$ and $bd_*$ illustrate the situation mentioned.

\begin{figure}[ht]
\caption{\label{figure-stairs-preprojective}The staircase ordering for preprojective modules}
$$
\hbox{\beginpicture
\setcoordinatesystem units <0.4cm,0.4cm>
\setplotarea x from 0 to 16, y from 0 to 3
\axis bottom ticks unlabeled quantity 17 /
\axis left ticks unlabeled quantity 4 /
\setplotarea x from 0 to 13, y from 5 to 12
\axis left ticks unlabeled quantity 8 /
\setplotarea x from 0 to 13, y from 14 to 17
\axis left ticks unlabeled quantity 4 /
\put{} at -5 -8
\put{} at 15 15
\arr{-2 0}{18 0}
\arr{0 18}{0 -2}
\put{$\mu^*$} at -1 -2
\put{$\mu$} at 18 -1
\put{\footnotesize\makebox[0pt][r]{1}} at -.8 1
\put{\footnotesize\makebox[0pt][r]{11}} at -.8 2
\put{\footnotesize\makebox[0pt][r]{111}} at -.8 3
\put{\footnotesize\makebox[0pt][r]{1111.4.4.4.3}} at -.8 5
\put{\footnotesize\makebox[0pt][r]{1111.4.4.4.2}} at -.8 6
\put{\footnotesize\makebox[0pt][r]{1111.4.4.3}} at -.8 7
\put{\footnotesize\makebox[0pt][r]{1111.4.4.2}} at -.8 8
\put{\footnotesize\makebox[0pt][r]{1111.4.3}} at -.8 9
\put{\footnotesize\makebox[0pt][r]{1111.4.2}} at -.8 10
\put{\footnotesize\makebox[0pt][r]{1111.3}} at -.8 11
\put{\footnotesize\makebox[0pt][r]{1111.2}} at -.8 12
\put{\footnotesize\makebox[0pt][r]{1111.211.211.211.1}} at -.8 14
\put{\footnotesize\makebox[0pt][r]{1111.211.211.1}} at -.8 15
\put{\footnotesize\makebox[0pt][r]{1111.211.1}} at -.8 16
\put{\footnotesize\makebox[0pt][r]{11111}} at -.8 17
\put{\footnotesize\rotatebox{-90}{\makebox[0pt][l]{1}}} at 1 -.8
\put{\footnotesize\rotatebox{-90}{\makebox[0pt][l]{11}}} at 2 -.8
\put{\footnotesize\rotatebox{-90}{\makebox[0pt][l]{111}}} at 3 -.8
\put{\footnotesize\rotatebox{-90}{\makebox[0pt][l]{111.2}}} at 4 -.8
\put{\footnotesize\rotatebox{-90}{\makebox[0pt][l]{111.21}}} at 5 -.8
\put{\footnotesize\rotatebox{-90}{\makebox[0pt][l]{111.211}}} at 6 -.8
\put{\footnotesize\rotatebox{-90}{\makebox[0pt][l]{111.211.2}}} at 7 -.8
\put{\footnotesize\rotatebox{-90}{\makebox[0pt][l]{111.211.21}}} at 8 -.8
\put{\footnotesize\rotatebox{-90}{\makebox[0pt][l]{111.211.211}}} at 9 -.8
\put{\footnotesize\rotatebox{-90}{\makebox[0pt][l]{111.211.211.2}}} at 10 -.8
\put{\footnotesize\rotatebox{-90}{\makebox[0pt][l]{111.211.211.21}}} at 11 -.8
\put{\footnotesize\rotatebox{-90}{\makebox[0pt][l]{111.211.211.211}}} at 12 -.8
\put{\footnotesize\rotatebox{-90}{\makebox[0pt][l]{111.211.211.211.2}}} at 13 -.8
\put{\footnotesize\rotatebox{-90}{\makebox[0pt][l]{111.211.211.211.21}}} at 14 -.8
\put{\footnotesize\rotatebox{-90}{\makebox[0pt][l]{111.211.211.211.211}}} at 15 -.8
\put{\footnotesize\rotatebox{-90}{\makebox[0pt][l]{111.211.211.211.211.2}}} at 16 -.8
\multiput{$\bullet$} at 1 1  2 2  3 3  4 17  5 12  6 11  7 16  8 10  9 9  
   10 15  11 8  12 7  13 14  14 6  15 5 /
\plot 1 1  4 1  4 17  7 17  7 16  10 16  10 15  13 15 13 14  16 14 /
\plot 2 2  5 2  5 12  8 12  8 10  11 10  11 8  14 8  14 6  16 6  /
\plot 3 3  6 3  6 11  9 11  9 9  12 9  12 7  15 7  15 5  16 5 /
\put{$bb_*$} at 17 14
\put{$bc_*$} at 17 6
\put{$bd_*$} at 17 5
\endpicture}$$
\end{figure}
\end{example}

\subsection{The ordering given by the waist-free parts}

We have seen that for a regular module $M$, the periodic part
$\per(M)$ determines the type the tube, while the multiplicity
$\mult(M)$ specifies the position within its family.  
The remaining data form the {\it waist-free part} of the measure;
similarly, one can define the waist-free part of the comeasure.
$$\wf(M)=\init(M)\cdot\fin(M); \qquad \wf^*(M)=\init^*(M)\cdot\fin^*(M)$$
Note that $\wf(M)$ and $\wf^*(M)$ do not depend on the representative 
$M$ of a family; thus they allow us to define a partial ordering for
families:  For two families $(M_i)$, $(M_j')$ we define 
$(M_i)\leq^*_{\rm wf}(M_j')$ if $\wf^*(M_i)\fleq^*\wf^*(M_j')$
holds for modules $M_i$ and  $M_j'$. 

\begin{proposition}\label{proposition-equivalent-orderings}
Consider the set $\mathcal S$ of families given by a ray in one of the tubes.
\begin{enumerate}
\item The set $\mathcal S$ is totally ordered with respect to $\leq_{\rm stair}$.
\item The set $\mathcal S$ is totally ordered with respect to $\leq_{\rm wf}^*$.
\item The two orderings $\leq_{\rm stair}$ and $\leq^*_{\rm wf}$ are equivalent.
\end{enumerate}
\end{proposition}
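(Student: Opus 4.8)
The plan is to extract the single comparison (3) as the main lemma and to derive (1) and (2) from it. Fix a ray in one of the two exceptional tubes and let $\mathcal S$ be the (finite) set of families meeting it. Since all modules on a fixed ray have a common GR-limit $\vec\mu$, and each family in $\mathcal S$ is a cofinal subsequence of that ray, every $F\in\mathcal S$ has GR-limit $\vec\mu$; being regular modules of one exceptional tube they share the periodic parts $\per$ and $\per^*$ by Proposition~\ref{proposition-periodic-tubes}. Writing $|\kappa|$ for the sum of the entries of a finite sequence $\kappa$, we have $|\per|=|\per^*|=h$, and since $\vec\mu=\init\cdot\overline{\per}$ exhibits $\init$ as the segment before the first full period, all families in $\mathcal S$ also share $\init$ (these are family invariants by Lemma~\ref{lemma-family-data}). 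Finally, consecutive members of a family differ by one full period, so $\dim M_{i+1}=\dim M_i+h$ and both $\mult$ and $\mult^*$ grow by $1$ per step; hence $\delta_F:=\mult^*(M_i)-\mult(M_i)$ is eventually constant on $F$, and I record the normal forms $\mu(M_i)=\init\cdot\per^{k}\cdot\fin_F$ and $\mu^*(M_i)=\init^*_F\cdot(\per^*)^{k+\delta_F}\cdot\fin^*_F$ with $k=\mult(M_i)$.

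For (3) I must show, for $F,F'\in\mathcal S$, that $F\stairleq F'$ holds exactly when $\wf^*(F)\fleq^*\wf^*(F')$. First I match measure levels: for a member of $F'$ with period count $k'$, the defining condition $\mu^F_\ell\fleq\mu^{F'}_n\flt\mu^F_{\ell+1}$ locates it among the members of $F$. Because the families share $\init$ and $\per$ and because $\fin\flt\per$ for regular modules (Proposition~\ref{rhombic-tubes}), the $\mathcal F$-order of the measures is governed first by the period count and only then, at equal counts, by $\fin$; thus the matching index $\ell$ has period count $k'$ or $k'-1$, the choice dictated by comparing $\fin_F$ with $\fin_{F'}$. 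Substituting the normal forms, the comeasure inequality $\mu^{F,*}_\ell\fleq^*\mu^{F',*}_n$ becomes a comparison of $\init^*_F\cdot(\per^*)^{k'-\epsilon+\delta_F}\cdot\fin^*_F$ with $\init^*_{F'}\cdot(\per^*)^{k'+\delta_{F'}}\cdot\fin^*_{F'}$, with $\epsilon\in\{0,1\}$ coming from the matching. Letting $n\to\infty$, either the GR-colimits $\init^*_F\overline{\per^*}$ and $\init^*_{F'}\overline{\per^*}$ differ, in which case the comparison is settled permanently by these colimits and hence by $\init^*$, or they agree, forcing $\init^*_F=\init^*_{F'}$ and reducing the comparison, after cancelling the common periodic bulk, to the tails $\fin^*_F$ versus $\fin^*_{F'}$. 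In either case the stable verdict is precisely that of $\wf^*(F)=\init^*_F\cdot\fin^*_F$ against $\wf^*(F')$ in $\fleq^*$, proving (3).

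The main obstacle is exactly this bookkeeping: the level matching is carried out in the measure while the conclusion is read off in the comeasure, and the two are tied together only by the dimension identity $|\init|+kh+|\fin_F|=|\init^*_F|+(k+\delta_F)h+|\fin^*_F|$ that defines $\delta_F$. I expect the delicate point to be verifying that the shift $\epsilon$ produced by the measure matching, combined with the family offsets $\delta_F,\delta_{F'}$, never reverses the verdict dictated by $\wf^*$; equivalently, that all discrepancies in the $\per^*$-exponents are confined to finitely many indices, which the clauses ``for almost all $n$'' and ``the difference set is finite'' in the definition of $\stairleq$ allow us to discard. Lemma~\ref{lemma-tails-have-high-measure} is the tool that compares a power of $\per^*$ with an incomplete tail and thereby legitimizes the cancellation in the equal-colimit case.

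With (3) in hand, (1) and (2) are formal. Totality: the assignment $F\mapsto\wf^*(F)\in(\mathcal F^*,\fleq^*)$ takes values in a totally ordered set, so any two families of $\mathcal S$ are comparable under $\wfleq$, and by (3) also under $\stairleq$; this is (1) together with the comparability half of (2). Antisymmetry of $\wfleq$: if $\wf^*(F)=\wf^*(F')$ then by (3) both $F\stairleq F'$ and $F'\stairleq F$ hold; since distinct families are disjoint as sets of modules, they represent distinct classes in $S(\vec\mu)$, so the antisymmetry of $\stairleq$ already established forces $F=F'$. Hence $\wfleq$ is a total order, giving (2); and (3) identifies it with $\stairleq$.
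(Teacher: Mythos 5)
Your reduction of the problem to a single equivalence (your part (3)) and the formal derivation of (1) and (2) from it is a sensible reorganization, and the level-matching analysis in the measure (that the matching index has period count $k'$ or $k'-1$, decided by comparing $\fin_F$ with $\fin_{F'}$) is correct. But the proof of (3) itself has a genuine gap at exactly the point you flag as ``the delicate point'' and then do not resolve. In the case where the colimits $\init^*_F\cdot\overline{\per^*}$ and $\init^*_{F'}\cdot\overline{\per^*}$ differ, you assert that the stable verdict of the staircase comparison ``is precisely that of $\wf^*(F)$ against $\wf^*(F')$''; this is not automatic, because when one of $\init^*_F$, $\init^*_{F'}$ is a proper prefix of the other, the colimit comparison continues into $\overline{\per^*}$ while the waist-free comparison continues into $\fin^*$, and for arbitrary data in $\mathcal F$ these two comparisons can disagree. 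In the equal-colimit case, ``cancelling the common periodic bulk'' is only legitimate if the two $\per^*$-exponents $k'-\epsilon+\delta_F$ and $k'+\delta_{F'}$ actually coincide; since $\fin^*\fgt\per^*$ for regular modules, a discrepancy in the exponents would decide the comparison by period count rather than by the tails, and then the claimed reduction to $\fin^*_F$ versus $\fin^*_{F'}$ fails. What closes both gaps in the paper is the structural catalogue of the tubes in Section~6.4: statements (L4)--(L5), (R4$'$)--(R5$'$) show that when the initial coparts differ they differ only in length while the final coparts are equal (so all three comparisons --- initial part, colimit, waist-free part --- coincide), and (R4$''$)--(R5$''$) show that when the initial coparts agree the final coparts differ in length by less than $h$ and their $\mathcal F$-order is determined by that length, which is exactly the dimension bookkeeping needed to pin down your $\epsilon$ and $\delta_F-\delta_{F'}$. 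Your argument neither proves nor invokes these facts, so (3) --- and with it (1) and (2) --- remains unproved.

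A secondary slip: in your antisymmetry argument you claim that distinct families ``represent distinct classes in $S(\vec\mu)$'' because they are disjoint as sets of modules. Disjointness of module sets does not by itself imply inequivalence of the coordinate sequences; the correct route is the converse, namely that equivalent sequences share a term, and since measures are strictly increasing along the ray a shared measure value forces a shared module and hence equal families. The paper avoids this entirely by observing directly (again from the tube catalogue) that distinct families have distinct waist-free parts, which gives part (2) immediately.
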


\begin{proof}
The result will follow from our discussion of the tubes in Section~\ref{section-tubes}.
\end{proof}

Using the waist-free ordering we can
strengthen Theorem~\ref{theorem-parallelograms} 
in the sense that the parallelograms given by Auslander-Reiten sequences 
are always non-degenerate.

\begin{corollary} Let $0\to A\to B_1\oplus B_2\to C\to 0$  be an Auslander-Reiten sequence 
in a nonhomogeneous tube 
Then the rectangle with vertices 
$$(\wf(A),\wf^*(A)),\; (\wf(B_1),\wf^*(B_1)),\; (\wf(B_2),\wf^*(B_2)),\; (\wf(C),\wf^*(C))$$
in the rhombic picture is non-degenerate.\qed
\end{corollary}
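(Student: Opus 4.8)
The plan is to transport the ray--coray analysis from the proof of Theorem~\ref{theorem-parallelograms} to the waist-free invariants and then to extract the required strict inequalities from the total orderings of Proposition~\ref{proposition-equivalent-orderings}. As there, I would assume $A\to B_1$ is monic, so that $A,B_1$ lie on a common ray, $A,B_2$ on a common coray, $B_2,C$ on a common ray, and $B_1,C$ on a common coray. Because the middle term has two indecomposable summands, $A$ is not quasi-simple, and because the tube is nonhomogeneous its rank is at least two; writing $A=(\sigma,\theta)$ for the (quasi-socle, quasi-top) pair, one then has $B_1=(\sigma,\theta')$, $B_2=(\sigma',\theta)$, $C=(\sigma',\theta')$ with $\sigma\neq\sigma'$ and $\theta\neq\theta'$. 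Hence the four modules lie in four pairwise distinct families, arranged as the corners of a $2\times 2$ grid indexed by (ray)$\times$(coray), and by Lemma~\ref{lemma-family-data} the four points $(\wf,\wf^*)$ are well defined.

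Next I would show that each of the two projections is nonconstant on these four points. Since $A$ and $B_1$ are distinct families on a common ray, Proposition~\ref{proposition-equivalent-orderings}(2) makes them comparable in $\wfleq$; as this ordering is induced by $\wf^*$ and is a genuine (antisymmetric) order, the two families are separated, i.e.\ $\wf^*(A)\neq\wf^*(B_1)$. Dually, applying the proposition over the opposite algebra --- where $\mu^*(M)=\mu(DM)$ interchanges rays with corays and $\wf$ with $\wf^*$ --- the families of $A$ and $B_2$ sit on a common coray and are therefore separated by $\wf$, so $\wf(A)\neq\wf(B_2)$.

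Finally I would assemble these. The inequality $\wf(A)\neq\wf(B_2)$ shows the four first coordinates are not all equal, and $\wf^*(A)\neq\wf^*(B_1)$ shows the four second coordinates are not all equal; thus the four vertices cannot lie on a single line parallel to either axis. This is precisely the non-degeneracy in question: in Theorem~\ref{theorem-parallelograms} the rectangle of rhombic \emph{limits} could collapse onto such a line exactly when two rays shared a GR-limit or two corays shared a GR-colimit, and passing from the limits to the waist-free parts restores the lost spread in both directions.

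The step I expect to be the real obstacle is the strictness, namely upgrading the total orderings of Proposition~\ref{proposition-equivalent-orderings} to the honest inequalities $\wf^*(A)\neq\wf^*(B_1)$ and $\wf(A)\neq\wf(B_2)$. This hinges on $\wf^*$ being injective on the family set $\mathcal S$ of a ray (and, dually, $\wf$ on that of a coray), which is exactly what the antisymmetry of $\stairleq$ together with its equivalence to $\wfleq$ in Proposition~\ref{proposition-equivalent-orderings}(3) guarantees; without them one obtains only $\wf^*(A)\fleq^*\wf^*(B_1)$ or its reverse, not a strict separation. I would also take care to derive the coray statement by dualizing through $D$ rather than reproving it, and to confirm that the rank-$\ge 2$ hypothesis, forced by nonhomogeneity and by the two-summand middle term, is what makes the four families genuinely distinct and hence the two separating inequalities available.
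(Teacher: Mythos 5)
Your proposal is correct and follows exactly the route the paper intends: the corollary is stated with an immediate \qed because it is meant to follow from Proposition~\ref{proposition-equivalent-orderings} and the tube discussion, namely that $\wf$ separates distinct families on a common coray and (dually) $\wf^*$ separates distinct families on a common ray, which applied to the pairs $(A,B_2)$ and $(A,B_1)$ of the four pairwise distinct families in a rank~$\geq 2$ tube gives the two strict inequalities you isolate. Your identification of the antisymmetry of $\wfleq$ (equivalently, injectivity of the waist-free invariants on ray- and coray-families) as the crux matches the paper's justification verbatim.
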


\subsection{A discussion of the tubes}\label{section-tubes}

For each tube we now investigate how the GR-measures change along a coray.  
We then give the proofs for
Proposition~\ref{proposition-periodic-tubes} and 
Proposition~\ref{proposition-equivalent-orderings}.

\medskip
We assume first that the take-off direction is to the right, 
i.e.\ $\mathcal L<_{\mathcal F}\mathcal R$ holds, and then consider the case of a symmetric
quiver.

\medskip
First we deal with the left tube, where
irreducible monomorphisms are given by adding a hook to the left end of the string,
and irreducible epimorphisms are given by deleting a cohook on the right end.
Note that whenever a cohook is deleted on the right end of a string, then 
the sequence $(\rho_i)$ of right hooks used in the Greedy Algorithm ends with a part
that is smaller than the corresponding part in $\mathcal R$.  As a consequence,
the starting point for the Greedy Algorithm is near the right end of the string.

\smallskip
For modules $M$, $M'$ in two different families on the same coray
in the left tube we have:

\begin{enumerate}
\item[(L1)] The periodic part is $\per(M)=\mathcal L$.
\item[(L2)] The initial part $\init(M)$ is strictly above the take-off limit $\vec\mu_T$.
\item[(L3)] Hence the GR-limit satisfies $\vec\mu(M)>_{\mathcal F}\vec\mu_T$.
\item[(L4)] The initial parts $\init(M)$, $\init(M')$ have different lengths,
  the final parts are equal: $\fin(M)=\fin(M')$.
\item[(L5)] The inequalities are equivalent:
  $$\init(M)\flt\init(M')  \iff  \wf(M)\flt\wf(M')  \iff \vec\mu(M)\flt\vec\mu(M')$$

\end{enumerate}

\bigskip
Next we deal with the {\it right tube,} so along a ray, hooks are added on the right end
of the string, and along a coray, cohooks are deleted from the left end of the string.
From the discussion of the Greedy Algorithm it follows that the starting point is
near the left end of the string.

\smallskip
For a module $M$ in the right tube we have:

\begin{enumerate}
\item[(R1)] The periodic part is $\per(M)=\mathcal R$.
\item[(R2)] The initial part $\init(M)$ is not necessarily above the take-off limit $\vec\mu_T$.
\end{enumerate}

We consider the two cases given by (R2) separately:

\smallskip
Suppose $M$, $M'$ occur in different families on the same coray,
and have both initial parts above the take-off limit $\vec\mu_T$.
Then the final parts $\fin(M)$, $\fin(M')$ are equal and 
consist only of parts of the right hook sequence $\mathcal R$.

\begin{enumerate}
\item[(R3$^{\prime}$)] The GR-limit is above the take-off limit, $\vec\mu_T\flt\vec\mu(M)$.
\item[(R4$^{\prime}$)] The initial parts $\init(M)$, $\init(M')$ have different lengths,
  and the final parts are equal, $\fin(M)=\fin(M')$.
\item[(R5$^{\prime}$)] The inequalities are equivalent:
  $$\init(M)\flt\init(M') \iff  \wf(M)\flt\wf(M')  \iff \vec\mu(M)\flt\vec\mu(M')$$
\end{enumerate}

Now assume that $M$, $M'$ occur in different families 
on the same coray in the right tube
such that both have initial parts below the take-off limit 
$\vec\mu_T$.  Then $\fin(M)$ and $\fin(M')$ are obtained by the Greedy Algorithm
from the same set of parts from $\mathcal R$, possibly from some parts 
from $\mathcal L$, and from one short left hook.

\begin{enumerate}
\item[(R3$^{\prime\prime}$)] The GR-limit is the take-off limit, $\vec\mu_T=\vec\mu(M)$.
\item[(R4$^{\prime\prime}$)] The initial parts are equal, $\init(M)=\init(M')$;
  the final parts $\fin(M)$, $\fin(M')$ have different lengths;
  their length difference is less than $h$.
\item[(R5$^{\prime\prime}$)] The inequalities are equivalent: 
  $$\len\fin(M)>\len\fin(M') \iff  \fin(M)\flt\fin(M') \iff \wf(M)\flt\wf(M')$$
\end{enumerate}

\bigskip
Let $M$ be a quasi-simple homogeneous module (see \cite{Ringel84}), and let  $M'$ be another module 
in the family of $M$, which is just the tube of $M$.

\begin{enumerate}
\item[(H1)] The periodic part is $\per(M')=\mathcal (h)$.
\item[(H2)] The initial part $\init(M')$ equals $\mu(M)$.
\item[(H3)] The GR-limit satisfies $\vec\mu(M')>_{\mathcal F}\vec\mu_T$.
\end{enumerate}

\bigskip
It remains to deal with the case where $Q$ is a symmetric quiver, so there is a
non-trivial symmetry operation on $Q$, which gives rise to a self-equivalence
of $\mod KQ$ that permutes the two tubes.  We obtain:

\begin{enumerate}
\item[(S0)] The set of points in the rhombic picture corresponding to the modules in the left
  tube is identical with the set of points for the right tube.
\item[(S1--5)] For both tubes, the statements (L1)--(L5) hold.
\end{enumerate}

We can now complete the remaining proofs.

\begin{proof}[Proof of Proposition~\ref{proposition-periodic-tubes}]
The periodic parts of the left tube, the right tube, and a homogeneous tube are $\mathcal L$,
$\mathcal R$, and $(h)$, according to (L1), (R1), and (H1).  As indicated before (S1),
in the symmetric case the GR-data cannot distinguish the two exceptional tubes; 
for each the periodic part is $\mathcal L=\mathcal R$.
\end{proof}

\begin{proof}[Proof of Proposition~\ref{proposition-equivalent-orderings}]
Fix a ray in one of the exceptional tubes, and 
let $(M_i)$ and $(M_i')$ be two families on the ray.  Put
$\mu_i=\mu(M_i)$, $\mu_i'=\mu(M_i')$, $\mu_i^*=\mu^*(M_i)$, $\mu_i'^*=\mu^*(M_i')$.
As the measures along the ray increase monotonically, say towards $\vec\mu$, 
the families $(M_i)$, $(M_i')$ have the same GR-limit $\vec\mu=\lim_i\mu_i=\lim_i\mu'_i$.
Since the modules are regular, the periodic part is less than the final part,
so within each family the GR-comeasures are strictly decreasing.
Hence the families given by the ray can be compared
in $\mathcal S(\vec\mu)$ with respect to the orderings $\stairleq$ and $\wfleq$.

\smallskip
(1) For the first part of the Proposition, 
we have to show that for the sequences $\mathcal M=(\mu_i,\mu_i^*)$, $\mathcal M'=(\mu_i',\mu_i'^*)$
either $\mathcal M\stairleq\mathcal M'$ or $\mathcal M'\stairleq\mathcal M$ holds.
Note that $\mu_\ell'\leq\mu_n<\mu_{\ell+1}'$ if and only if $\dim M_\ell'\leq\dim M_n<\dim M_\ell'+h$ 
if and only if $\dim DM_\ell'\leq\dim DM_n<\dim DM_\ell'+h$.
The statement is clear if $\lim_n\mu_n^*\neq\lim_\ell\mu_\ell'^*$.
Suppose $\ell$, $n$ satisfy the condition $\mu_\ell'\leq\mu_n<\mu_{\ell+1}'$.  
Since the modules $M=DM_n$, $M'=DM_\ell'$ lie on the same coray, we may use the above results.
In the situations (L4) and (R4$^{\prime}$), the GR-comeasures $\mu_n^*$ and $\mu_\ell'^*$ differ in their
initial parts, and hence the limits are different.  The only possibility for above limits to be
equal is the situation in (R4$^{\prime\prime}$) where the initial and periodic parts are equal, and the final
parts differ in their lengths.  We distinguish two cases:

\smallskip
First assume $\dim M'\leq\dim M<\dim M'+h$ and $\ell\fin(M)>\ell\fin(M')$. Then by (R5$^{\prime\prime}$),
$\fin(M)\flt\fin(M')$, so $\mu_n^*\fgt^*\mu_\ell'^*$.  Thus, by definition, 
$\mathcal M'\stairleq \mathcal M$.  

\smallskip
Now assume $\dim M'\leq\dim M<\dim M'+h$ and $\ell\fin(M')>\ell\fin(M)$.
Let $M'^+=DM_{\ell+1}'$ be the next module in the family, so $\dim M\leq\dim M'^+<\dim M+h$ holds.
By (R4$^{\prime\prime}$), the length of the final parts 
of $M$ and $M'^+$ differs by less than $h$.
It follows that there is an $m\in\mathbb N$
such that 
\begin{eqnarray*}\mu(M)&=&\init(M)\cdot \per(M)^{m+1}\cdot\fin(M)\\
  \mu(M')&=&\init(M)\cdot\per(M)^m\cdot\fin(M')\\
  \mu(M'^+)&=&\init(M)\cdot\per(M)^{m+1}\cdot\fin(M')\end{eqnarray*}
hold.  Again by (R5$^{\prime\prime}$),
$\fin(M')\flt\fin(M)$, so $\mu_{\ell+1}'^*\fgt^*\mu_n^*$.  Thus, by definition, 
$\mathcal M'\stairgeq \mathcal M$.  

\medskip
(2) Clearly, different families on a coray have different waist-free parts.  
As a consequence, $\wfleq$ is a total ordering for the families on a ray in an exceptional tube.

\medskip
(3) The staircase ordering is as follows:  $\mathcal M'\stairleq \mathcal M$ if
$\lim_i\mu_i'^*\fleq^*\lim_i\mu_i^*$, and in case the limits are equal, if
$\len\fin(DM_j')\leq\len\fin(DM_i)$ holds for $i$, $j$ suitably large.  
The condition on the limits is equivalent to 
$\init(DM_j')\fgeq\init(DM_i)$, the condition on the lengths is equivalent to
$\fin(DM_j')\fgeq\fin(DM_i)$  by (R5$^{\prime\prime}$).  In conclusion, $\mathcal M'\stairleq \mathcal M$
holds if and only if  $\wf(DM_j')\fgeq\wf(DM_i)$, which is equivalent to 
$\mathcal M'\wfleq \mathcal M$.

\end{proof}

\subsection{The Tiling Theorem}

\begin{definition}
We say that a quiver $Q$ of type $\widetilde{\mathbb A}_n$ 
{\em has a unique widest valley} if in each exceptional tube,
the same sink, up to the periodic shift,
may be chosen as Greedy Algorithm starting point 
for all modules $M$ of sufficiently large dimension.

\smallskip
Dually, $Q$ {\em has a unique widest hill} if in each exceptional tube
the same sink in $Q^\op$, up the shift,
may be chosen as Greedy Algorithm starting point for each module $DM$
where $M$ is a sufficiently large module in the tube.
\end{definition}

\begin{remark}
The conditions in the definition are satisfied if $Q$ has a valley and a hill
that are at least 2 units wider than any other valley and hill, respectively.
If $Q$ has several valleys of maximal size, the starting point will jump 
among their sinks.
If the difference in size is just 1 unit, a jump among different sinks may occur. 
For the convenience of the reader, we repeat the two-sink two-source example given earlier.
\end{remark}

\begin{example}
We see that the two-sink, two-source 
$\widetilde{\mathbb A}_{4}$ quiver from Example~\ref{first-example}
has both a unique widest valley and a unique widest hill. 

$$\beginpicture
  \setcoordinatesystem units <.7cm, .5cm>
  \multiput{$\sssize\bullet$} at  6 -.5  7 .5  8 -1  
                       9 0  10 1  11 -.5  12 .5  13 -1
                       14 0  15 1  16 -.5  17 .5 /
  \arr{6.8 .3} {6.2 -.3}
  \arr{7.2 .2} {7.8 -.7}
  \arr{8.8 -.2} {8.2 -.8}
  \arr{9.8 .8} {9.2 .2}
  \arr{10.2 .7} {10.8 -.2}
  \arr{11.8 .3} {11.2 -.3}
  \arr{12.2 .2} {12.8 -.7}
  \arr{13.8 -.2} {13.2 -.8}

  \arr{14.8 .8} {14.2 .2}
  \arr{15.2 .7} {15.8 -.2}
  \arr{16.8 .3} {16.2 -.3}

  \put{$b$} at 8.4 -1.2
  \put{$b'$} at 12.5 -1.2
\setdots<2pt>
\plot 7.3 -1.5  7.3 2 /
\plot 9.7 -1.5  9.7 2 /
\plot 13.3 -2  13.3 1.5 /
\plot 15.7 -2  15.7 1.5 /
\betweenarrows {2} from 7.3 1.5  to 9.7 1.5 
\betweenarrows {2} from 13.3 -1.3  to 15.7 -1.3
\put{\tiny widest valley} at 8.5 -2.3
\put{\tiny widest hill} at 14.5 2 
\multiput{$\cdots$} at 5 0  18 0 /
\endpicture
$$
\end{example}

With this notion, we can add the following observation to our discussion of the tubes.

\smallskip
\begin{remark}
Suppose $Q$ has a unique widest valley and assume that the modules $M$, $M'$ occur
on the same coray in an exceptional tube for $kQ$.
Then $\init(M)\flt\init(M')$ if and only if $\ell\init(M)>\ell\init(M')$.
Here we denote by $\ell$ the sum of the parts of a finite sequence in $\mathcal F$.
\end{remark}

\medskip
We now restate our final result, The Tiling Theorem.\\

\begin{theorem}\label{theorem-tiling}
Suppose the quiver $Q$ of type $\widetilde{\mathbb A}_n$ has a unique
widest valley and a unique widest hill. Then for each tube,
the system of limits in the rhombic picture provides a tiling for the tube.
\end{theorem}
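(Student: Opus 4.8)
The plan is to show that, for each tube $\mathcal T$, the array of (refined) limit points faithfully records the ray--coray incidence of $\mathcal T$: read along the line $\ell$ parallel to the $\mu^*$-axis, the limit points occur in exactly the cyclic order in which the families occur along a ray, and dually along each coray. Since every module of $\mathcal T$ sits at a unique ray--coray crossing, recovering both orders from the picture recovers the whole tube, which is the asserted tiling. Two preparatory reductions are immediate. By (H1)--(H3) a homogeneous tube is a single family with constant initial part and periodic part $(h)$, so it carries a single limit point and nothing is to be shown; and by the symmetry data (S0)--(S5) the two exceptional tubes carry identical arrays in the symmetric case, so in all cases it suffices to analyse the left and the right tube by means of (L1)--(L5) and (R1)--(R5$^{\prime\prime}$), assuming as usual that the take-off direction is to the right.

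Fix a ray $r$ in an exceptional tube. As the string grows along $r$ its far endpoint cycles, so the families encountered form a cyclic list $F^{(1)},\dots,F^{(k)},F^{(1)},\dots$ of period equal to the rank $k$ of $\mathcal T$. All of them share the GR-limit $\vec\mu$, so by Theorem~\ref{theorem-parallelograms} their limit points lie on one line $\ell$ parallel to the $\mu^*$-axis; and by Proposition~\ref{proposition-equivalent-orderings} they are totally ordered on $\ell$ by $\wfleq$, equivalently by $\stairleq$, which by definition is nothing but the order in which the points are read off along $\ell$ (after the refinement by waist-free parts that makes the limits distinct, as in the Corollary following Proposition~\ref{proposition-equivalent-orderings}). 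It remains to identify this order with the order along $r$. Here the unique widest hill enters: passing to $Q^{\op}$, the modules $DM_i$ run along a coray whose Greedy starting point is, by hypothesis, constant up to periodic shift, so the coray analysis (L4)/(R4$^{\prime}$) applies uniformly and, via the Remark preceding the theorem read in $Q^{\op}$, the waist-free part $\wf^*$ --- hence the position on $\ell$ --- varies strictly monotonically with the family index. Therefore the $k$ families of $r$ occupy $k$ distinct points of $\ell$ in exactly their cyclic ray order, and the list repeats with period $k$.

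Running the dual argument along a coray $c$, now invoking the unique widest valley together with (L5)/(R5$^{\prime}$),(R5$^{\prime\prime}$) and the Remark preceding the theorem, shows that the families on $c$ share $\vec\mu^*$, lie on a line parallel to the $\mu$-axis, and have limit points ordered, via the waist-free parts, in precisely the cyclic order of the families along $c$. Combining the two directions finishes the proof: the first coordinate $\vec\mu$ of a family's limit point is pinned by its ray and the second coordinate $\vec\mu^*$ by its coray, the two monotonicities make the assignment family $\mapsto$ limit point injective and order-preserving along both $\ell$ and the horizontal lines, and so the array of limit points reproduces the complete system of rays and corays of $\mathcal T$.

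The main obstacle is precisely this faithfulness step --- that distinct families on a common ray (coray) receive distinct, correctly ordered limit points. This is where the \emph{uniqueness} of the widest valley and hill is indispensable. If several valleys of maximal width compete, the Greedy starting point can jump among their sinks (as the Remark warns), the waist-free parts of neighbouring families need no longer be comparable, and the staircase order can fail to match the ray order or even to be total: the preprojective families $bc_*$ and $bd_*$ of Figure~\ref{figure-stairs-preprojective} already share a rhombic limit while being $\stairleq$-incomparable. The technical heart is therefore to verify that the unique-widest-valley (resp.\ -hill) hypothesis forces the Greedy starting point to be constant up to shift along every coray (resp.\ ray), after which (R5$^{\prime\prime}$), (L5) and the cited monotonicities yield the strict ordering and the periodicity.
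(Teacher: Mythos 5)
Your proposal is correct and follows essentially the same route as the paper: identify the left--right order of limit points with the waist-free ordering via Proposition~\ref{proposition-equivalent-orderings}, and identify the waist-free ordering with the order of families along corays (dually, rays) via the monotone, saw-tooth behaviour of the waist-free parts once the Greedy starting point is constant up to periodic shift. The only remark worth adding is that the ``technical heart'' you flag at the end --- deducing constancy of the starting point along each coray (resp.\ ray) from the unique-widest-valley (resp.\ -hill) hypothesis --- requires no verification, since the paper's definition of these hypotheses is precisely that the same sink may be chosen, up to the shift, for all sufficiently large modules in the tube.
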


A technical lemma is needed before we can commence the proof of the theorem.

\begin{lemma}
Suppose all sufficiently large modules on a coray in a non-homogeneous tube 
have the same Greedy Algorithm starting point, up to the periodic shift.
Then the ordering of the waist-free parts of the families agrees with 
the sequence in which they occur along the coray.
\end{lemma}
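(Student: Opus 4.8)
The plan is to feed the two families into the coray-by-coray description assembled above and to reduce the comparison of their waist-free parts to a comparison of lengths, where position along the coray becomes visible. First I would fix a coray in a non-homogeneous tube and split into cases. If the tube is the left tube---or if $Q$ is symmetric, in which case (S0) and (S1--5) let me argue as for the left tube---the governing facts are (L4) and (L5). If the tube is the right tube, I would use (R2) to distinguish the sub-case of (R3$^{\prime}$)--(R5$^{\prime}$), where the final parts of two families on the coray coincide and the initial parts differ, from the sub-case of (R3$^{\prime\prime}$)--(R5$^{\prime\prime}$), where the initial parts coincide and the final parts differ. In each case exactly one part of the measure varies along the coray; call it the governing part. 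The quoted equivalences then reduce a $\flt$-comparison of waist-free parts to a $\flt$-comparison of governing parts: (L5) and (R5$^{\prime}$) give $\wf(M)\flt\wf(M')\iff\init(M)\flt\init(M')$, while (R5$^{\prime\prime}$) gives $\wf(M)\flt\wf(M')\iff\fin(M)\flt\fin(M')$.

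It remains to turn a $\flt$-comparison of governing parts into a statement about position on the coray, and this is where the hypothesis is used. Because all sufficiently large modules on the coray share a single Greedy Algorithm starting point up to the periodic shift, the governing part of each family is read off from the same anchor, so its length records precisely the distance from that anchor to the end of the string at which cohooks are deleted. The length-versus-$\mathcal F$ correspondence recorded just before the theorem---$\init(M)\flt\init(M')\iff\len\init(M)>\len\init(M')$, together with its final-part analogue built into (R5$^{\prime\prime}$)---then converts the $\flt$-comparison of governing parts into a comparison of these distances, that is, into the order in which the two families are met as one traverses the coray. Chaining the equivalences gives $\wf(M)\flt\wf(M')$ if and only if $M'$ occurs beyond $M$ along the coray, which is the assertion of the lemma.

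I expect the passage from governing-part length to coray position to be the main obstacle, and it is precisely here that the fixed-starting-point hypothesis is indispensable. One must verify that a single irreducible epimorphism, deleting one cohook at the fixed end of the string, changes the length of the governing part in one definite direction while leaving untouched the complementary, \emph{constant} part of (L4), (R4$^{\prime}$) or (R4$^{\prime\prime}$); were the Greedy Algorithm starting point allowed to jump to a different valley, this clean one-step description---and with it the monotone correspondence between length and position---could fail. A secondary point, immediate from (L4), (R4$^{\prime}$) and (R4$^{\prime\prime}$), is that distinct families on the coray have distinct governing parts and hence distinct waist-free parts, so the ordering of the $\wf(M)$ is a genuine total order realizing the coray sequence. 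With these verified the lemma follows, the symmetric case being covered by (S0) and (S1--5).
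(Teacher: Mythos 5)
Your argument is correct and follows essentially the same route as the paper's proof: both reduce the comparison of waist-free parts to the single varying (``governing'') part via (L4)--(L5), (R4$'$)--(R5$'$), (R4$''$)--(R5$''$), invert length against the $\fleq$-order, and use the fixed starting point to make the length of that part decrease monotonically (the paper's ``saw-tooth'' until the jump) along the coray. The only point you gloss over that the paper makes explicit is that the coray visits the families cyclically, so one must ``choose a suitable first family'' to match the linear order of waist-free parts with the cyclic sequence.
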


\begin{proof}
Along a coray, the lengths of the waist-free parts follow a saw-tooth pattern as they
decrease until the starting point ``jumps''. We have seen in the discussion of the tubes
that as the lengths of the waist-free parts decrease, their measures 
increase in $\mathcal F$.
Thus, by choosing a suitable first family on the coray,
the ordering of the families agrees with the ordering of their the waist-free parts.

\end{proof}

\begin{proof}[Proof of the Tiling Theorem~\ref{theorem-tiling}]
We have already seen that the ordering of families by their waist-free parts agrees with the
staircase ordering which we picture as a
left-right ordering in the rhombic picture.
Using the lemma and its dual version we obtain the result.
\end{proof}

\subsection{Examples}

\begin{example}
The quiver $Q$ considered in Example~\ref{first-example}
is symmetric with respect to rotation by $\pi$, so the take-off direction
of $Q$, and the take-off direction of $Q^\op$ are both to the right.  

\smallskip
The left tube pictured in Figure~\ref{figure-left}
consists of the regular modules with with periodic parts $\per=32$ and $\per^*=32$.
The  GR-limits in the 
rhombic picture in Figure~\ref{figure-rhombic-one} 
are above the take-off limit and their GR-colimits are below the take-off colimit,
hence the limit points are determined uniquely by the inital parts (see (L2), (L4) above).

\smallskip
The right tube is given by $\per=221$, $\per^*=221$. Here the take-off limit and the take-off
colimits are attained, but only by one family on each coray and each ray, respectively.
So the limit points are still determined uniquely by the inital parts (see (R2), (R4$^{\prime}$), 
(R3$^{\prime\prime}$), (R4$^{\prime\prime}$) above).
The picture in the introduction shows how the
system of limit points in the rhombic picture corresponds to a tiling of the tube.
\end{example}

\begin{example}
We now discuss briefly the rhombic picture in Figure~\ref{figure-second-rhombic-picture}
for the one sink-one source quiver
in Example~\ref{second-example}.
Again, the take-off direction and the take-off codirection are both to the right.  
Here, the left tube consists only of one family, $ba_*$.

\smallskip
The right tube, which consists of the nine 
families $ab_*$, $ac_*$, $ad_*$, $cb_*$, $cc_*$, $cd_*$, $db_*$, $dc_*$, $dd_*$,
has the property that on each ray, the comeasures for two of three 
families approach the take-off colimit, and on each coray, the measures for two families
approach the take-off limit.  Thus in the rhombic picture, there are only four limit points
for nine families.

\smallskip
Along the ray $cc_1\to cd_2\to cb_4\to cc_5\to \cdots$, the families $cc_*$, $cd_*$ have the same
colimits, but we can use the staircase ordering to refine the colimit by
taking also the direction from where the colimit
is approached into account, as we have seen in Example~\ref{second-example}.
Using this refinement, the system of limits in the rhombic picture produces a tiling of the 
exceptional tube pictured in Figure~\ref{figure-exceptional}.
\end{example}

\begin{figure}[ht]
\caption{\label{figure-second-rhombic-picture}The rhombic picture for the second example}
$$
\hbox{\beginpicture
\setcoordinatesystem units <0.3cm,0.3cm>
\multiput{} at -16 16  17 16  3 -2  1 29 /
\arr{-16 16}{2 -2}
\put{$\mu^*$} at 3 -2
\arr{-2 -2}{16 16}
\put{$\mu$} at 17 16

\plot -6.3 5.7  -6 6 /
\put{$\vec\mu_T^*$} at -7 5
\setdots<3pt>
\plot -1 11  1 13 / \plot 2 14  9 21 /
\setsolid

\plot -9.3 8.7  -9 9  /
\put{$\vec\mu_H^*$} at -10 8
\setdots<3pt>
\plot -4 14  1 19 /
\setsolid

\plot -14.3 13.7 -14 14 /
\put{$\vec\mu_L^*$} at -15 13 
\setdots<3pt>
\plot -9 19  -7 21 /  \plot -1 27  1 29 /
\setsolid

\plot 6.3 5.7  6 6 /
\put{$\vec\mu_T$} at 7 5
\setdots<3pt>
\plot 1 11  -1 13  / \plot -2 14  -9 21 /
\setsolid

\plot 9.3 8.7  9 9  /
\put{$\vec\mu_H$} at 10 8
\setdots<3pt>
\plot 4 14  -1 19 /
\setsolid

\plot 14.3 13.7 14 14 /
\put{$\vec\mu_L$} at 15 13 
\setdots<3pt>
\plot 9 19  7 21 /  \plot 1 27  -1 29 /
\setsolid

\multiput{\makebox(0,0){$\bullet$}} at 0 12  3 15  8 20  -3 15  0 18  -8 20  0 28 /

\put{\makebox(0,0){$\begin{blue}ab\end{blue}\strut$}} at 0 24
\arr{0 25}{0 27}
\put{\makebox(0,0){$\begin{blue}db\end{blue}\strut$}} at -8 16
\arr{-8 17}{-8 19}
\put{\makebox(0,0){$\begin{blue}cb\end{blue}\strut$}} at -9 15
\arr{-8.75 16.2}{-8.25 19}
\put{\makebox(0,0){$\begin{blue}ac\end{blue}\strut$}} at 8 16
\arr{8 17}{8 19}
\put{\makebox(0,0){$\begin{blue}ad\end{blue}\strut$}} at 9 15
\arr{8.75 16.2}{8.25 19}
\put{\makebox(0,0){$\begin{blue}dc\end{blue}\strut$}} at 0 8
\arr{0 9}{0 11}
\put{\makebox(0,0){$\begin{blue}cc\end{blue}\strut$}} at -1 7
\arr{-.8 8}{-.2 11}
\put{\makebox(0,0){$\begin{blue}dd\end{blue}\strut$}} at 1 7
\arr{.8 8}{.2 11}
\put{\makebox(0,0){$\begin{blue}cd\end{blue}\strut$}} at 0 6

\arr{-12 20}{-9 20}
\put{\makebox(0,0){$bb\strut$}} at -11 19
\arr{-7 15}{-4 15}
\put{\makebox(0,0){$bc,bd\strut$}} at -6 14
\arr{12 20}{9 20}
\put{\makebox(0,0){$aa\strut$}} at 11 19
\arr{7 15}{4 15}
\put{\makebox(0,0){$ca,da\strut$}} at 6 14
\arr{0 15}{0 17}
\put{\makebox(0,0){$H, ba\strut$}} at 0 14
\endpicture}$$
\end{figure}

\begin{figure}[ht]
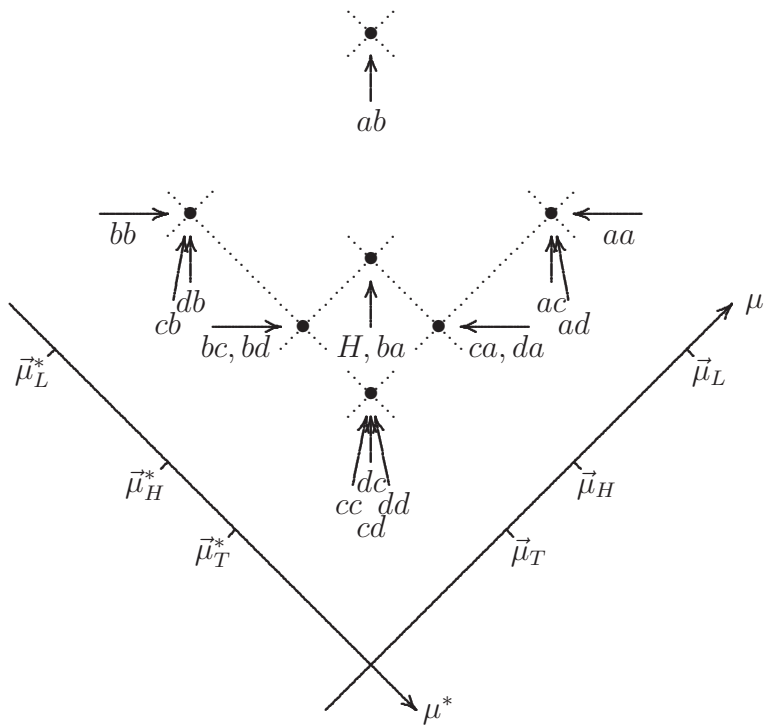

\caption{\label{figure-exceptional}The right tube in the second example}
$$\hbox{\beginpicture
\setcoordinatesystem units <0.6cm,0.6cm>
\put{} at -2 8.6
\put{} at 6 0
\put{$\ssize dd_1$} at 0 8
\put{$\ssize \begin{blue}ab_2\end{blue}$} at 2 8
\put{$\ssize cc_1$} at 4 8
\put{$\ssize cd_2$} at -1 7
\put{$\ssize \begin{blue}db_3\end{blue}$} at 1 7
\put{$\ssize \begin{blue}ac_3\end{blue}$} at 3 7
\put{$\ssize cd_2$} at 5 7
\put{$\ssize \begin{blue}cb_4\end{blue}$} at 0 6
\put{$\ssize \begin{blue}dc_4\end{blue}$} at 2 6
\put{$\ssize \begin{blue}ad_4\end{blue}$} at 4 6
\put{$\ssize ab_6$} at -1 5
\put{$\ssize \begin{blue}cc_5\end{blue}$} at 1 5
\put{$\ssize \begin{blue}dd_5\end{blue}$} at 3 5
\put{$\ssize ab_6$} at 5 5
\put{$\ssize ac_7$} at 0 4
\put{$\ssize \begin{blue}cd_6\end{blue}$} at 2 4
\put{$\ssize db_7$} at 4 4
\put{$\ssize dc_8$} at -1 3
\put{$\ssize ad_8$} at 1 3
\put{$\ssize cb_8$} at 3 3
\put{$\ssize dc_8$} at 5 3
\put{$\ssize dd_9$} at 0 2
\put{$\ssize ab_{10}$} at 2 2
\put{$\ssize cc_9$} at 4 2
\arr{.3 7.7}{.7 7.3}
\arr{2.3 7.7}{2.7 7.3}
\arr{4.3 7.7}{4.7 7.3}
\arr{1.3 7.3}{1.7 7.7}
\arr{3.3 7.3}{3.7 7.7}
\arr{-.7 7.3}{-.3 7.7}
\arr{.3 6.3}{.7 6.7}
\arr{2.3 6.3}{2.7 6.7}
\arr{4.3 6.3}{4.7 6.7}
\arr{1.3 6.7}{1.7 6.3}
\arr{3.3 6.7}{3.7 6.3}
\arr{-.7 6.7}{-.3 6.3}
\arr{.3 5.7}{.7 5.3}
\arr{2.3 5.7}{2.7 5.3}
\arr{4.3 5.7}{4.7 5.3}
\arr{1.3 5.3}{1.7 5.7}
\arr{3.3 5.3}{3.7 5.7}
\arr{-.7 5.3}{-.3 5.7}
\arr{.3 4.3}{.7 4.7}
\arr{2.3 4.3}{2.7 4.7}
\arr{4.3 4.3}{4.7 4.7}
\arr{1.3 4.7}{1.7 4.3}
\arr{3.3 4.7}{3.7 4.3}
\arr{-.7 4.7}{-.3 4.3}
\arr{.3 3.7}{.7 3.3}
\arr{2.3 3.7}{2.7 3.3}
\arr{4.3 3.7}{4.7 3.3}
\arr{1.3 3.3}{1.7 3.7}
\arr{3.3 3.3}{3.7 3.7}
\arr{-.7 3.3}{-.3 3.7}
\arr{.3 2.3}{.7 2.7}
\arr{2.3 2.3}{2.7 2.7}
\arr{4.3 2.3}{4.7 2.7}
\arr{1.3 2.7}{1.7 2.3}
\arr{3.3 2.7}{3.7 2.3}
\arr{-.7 2.7}{-.3 2.3}
\arr{.3 1.7}{.7 1.3}
\arr{2.3 1.7}{2.7 1.3}
\arr{4.3 1.7}{4.7 1.3}
\arr{1.3 1.3}{1.7 1.7}
\arr{3.3 1.3}{3.7 1.7}
\arr{-.7 1.3}{-.3 1.7}
\setdots<2pt>
\plot -1 8  -.5 8 /
\plot .5 8  1.5 8 /
\plot 2.5 8  3.5 8 /
\plot 4.5 8  5 8 /
\setsolid
\plot -1 8.5  -1 7.5 /
\plot -1 6.5  -1 5.5 /
\plot -1 4.5  -1 3.5 /
\plot -1 2.5  -1 0 /
\plot 5 8.5  5 7.5 /
\plot 5 6.5  5 5.5 /
\plot 5 4.5  5 3.5 /
\plot 5 2.5  5 0 /
\multiput{$\vdots$} at 1 .5  3 .5 /
\setdots<2pt>
  \plot -.6 6  2 8.6  4.6 6  2 3.4  -.6 6 /
  \plot -1.6 7  -1 6.4  1.2 8.6 /
  \plot 2.8 8.6  5 6.4  5.6 7 /
  \plot 5.6 5  5 5.6  2.4 3  4.6 .8 /
  \plot -1.6 5  -1 5.6  1.6 3  -.6 .8 /
  \plot .2 .8  2 2.6  3.8 .8 /
\endpicture}
$$
\end{figure}

\newpage
\section{Acknowledgements}
The first named author wishes to thank Manhattan College for the invitation to
two monthlong visits in the summers of 2010 and 2011. 
The project was initiated when the second named author visited Florida Atlantic University during her sabbatical leave from Manhattan College in the spring of 2010. She is grateful to both institutions for their generosity and hospitality. Both authors appreciate the helpful suggestions offered by the referee.

\bigskip

\end{document}